\documentclass[final,onefignum,onetabnum]{siamart250211}




\usepackage{lipsum}
\usepackage{amsfonts}
\usepackage{graphicx}
\usepackage{epstopdf}
\usepackage{algorithmic}

\ifpdf
  \DeclareGraphicsExtensions{.eps,.pdf,.png,.jpg}
\else
  \DeclareGraphicsExtensions{.eps}
\fi


\newsiamremark{remark}{Remark}
\newsiamremark{hypothesis}{Hypothesis}
\crefname{hypothesis}{Hypothesis}{Hypotheses}
\newsiamthm{claim}{Claim}

\headers{First-order methods on bounded-rank tensors}{B. Gao, R. Peng, and Y.-x. Yuan}

\title{First-order methods on bounded-rank tensors converging to stationary points\thanks{Submitted to the editors DATE.
\funding{This work was supported by the National Key R\&D Program of China (grant 2023YFA1009300). BG was supported by the Young Elite Scientist Sponsorship Program by CAST. YY was supported by the National Natural Science Foundation of China (grant No. 12288201).}}}

\author{Bin Gao\thanks{State Key Laboratory of Mathematical Sciences, Academy of Mathematics and Systems Science, Chinese  Academy of Sciences, China
		({\{gaobin,yyx\}@lsec.cc.ac.cn}).}
    \and Renfeng Peng\thanks{State Key Laboratory of Mathematical Sciences, Academy of Mathematics and Systems Science, Chinese  Academy of Sciences, and University of Chinese Academy of Sciences, China ({pengrenfeng@lsec.cc.ac.cn}).}
	\and Ya-xiang Yuan\footnotemark[2]
}

\usepackage{amsopn}


\usepackage{amssymb}




\newcommand{\vecx}{\mathbf{x}}
\newcommand{\vecr}{\mathbf{r}}

\newcommand{\vecy}{\mathbf{y}}

\newcommand{\mata}{\mathbf{A}}
\newcommand{\matb}{\mathbf{B}}

\newcommand{\matx}{\mathbf{X}}
\newcommand{\matu}{\mathbf{U}}
\newcommand{\matv}{\mathbf{V}}
\newcommand{\matW}{\mathbf{W}}

\newcommand{\matG}{\mathbf{G}}
\newcommand{\matI}{\mathbf{I}}

\newcommand{\matL}{\mathbf{L}}

\newcommand{\matR}{\mathbf{R}}

\newcommand{\matS}{\mathbf{S}}

\newcommand{\matU}{\mathbf{U}}

\newcommand{\tensA}{\mathcal{A}}

\newcommand{\tensC}{\mathcal{C}}

\newcommand{\tensG}{\mathcal{G}}

\newcommand{\tensM}{\mathcal{M}}

\newcommand{\tensV}{\mathcal{V}}
\newcommand{\tensW}{\mathcal{W}}
\newcommand{\tensY}{\mathcal{Y}}
\newcommand{\tensX}{\mathcal{X}}
\newcommand{\tensZ}{\mathcal{Z}}
\newcommand{\rank}{\mathrm{rank}}

\newcommand{\ranktc}{\mathrm{rank}_{\mathrm{tc}}}

\newcommand{\St}{\mathrm{St}}

\newcommand{\Span}{\mathrm{span}}

\newcommand{\subjectto}{\mathrm{s.\,t.}}
\newcommand{\T}{\mathsf{T}}

\newcommand{\frob}{\mathrm{F}}

\newcommand{\PGD}{$\mathrm{PGD}$}
\newcommand{\PtwoGD}{$\mathrm{P^2GD}$}

\newcommand{\RFD}{$\mathrm{RFD}$}

\usepackage{tikz}
\usepackage{tikz-3dplot}
\usetikzlibrary{patterns}
\usetikzlibrary{shapes,calc,shapes,arrows}
\usetikzlibrary {arrows.meta}

\makeatletter
\@addtoreset{equation}{section}
\makeatother



\DeclareMathOperator{\ten}{ten}
\DeclareMathOperator{\tangent}{T}
\DeclareMathOperator{\normal}{N}

\DeclareMathOperator{\retr}{R}

\DeclareMathOperator{\proj}{P}
\DeclareMathOperator{\approj}{\tilde{P}}

\DeclareMathOperator{\ho}{{HO}}
\DeclareMathOperator*{\argmin}{arg\,min}
\DeclareMathOperator*{\argmax}{arg\,max}




\begin{document}

\maketitle

\begin{abstract}
Provably finding stationary points on bounded-rank tensors turns out to be an open problem~[E. Levin, J. Kileel, and N. Boumal, \emph{Math. Program.}, 199 (2023), pp. 831--864] due to the inherent non-smoothness of the set of bounded-rank tensors. In contrast with bounded-rank matrices, tensors where some but not all modes are of full rank render essential difficulties in developing provable first-order methods. We resolve this problem by proposing two first-order methods with guaranteed convergence to stationary points. Specifically, we revisit the variational geometry of bounded-rank tensors and explicitly characterize its normal cones. Moreover, we propose gradient-related approximate projection methods that are provable to find stationary points, where the decisive ingredients are gradient-related vectors from tangent cones, line search along approximate projections, and rank-decreasing mechanisms near rank-deficient points. Numerical experiments on tensor completion validate that the proposed methods converge to stationary points across various rank parameters.
\end{abstract}

\begin{keywords}
Low-rank optimization, bounded-rank tensors, Tucker decomposition, line search methods, rank-decreasing mechanisms
\end{keywords}

\begin{MSCcodes}
15A69, 40A05, 65K05, 90C30
\end{MSCcodes}

\section{Introduction}
We are concerned with the following low-rank tensor optimization problem based on Tucker decomposition~\cite{tucker1964extension},
\begin{equation}
    \begin{aligned}
        \min_{\tensX}\ &\ \ \ \ \ \ f(\tensX) \\
        \subjectto\ &\quad \tensX\in\tensM_{\leq\vecr}:=\{\tensX\in\mathbb{R}^{n_1\times n_2\times\cdots\times n_d}:\ranktc(\tensX)\leq\vecr\},
    \end{aligned}
    \label{eq: problem (P)}
\end{equation}
where $f:\mathbb{R}^{n_1\times n_2\times\cdots\times n_d}\to\mathbb{R}$ is a continuously differentiable function, the rank parameter $\vecr=(r_1,r_2,\dots,r_d)$ is an array of $d$ positive integers, and $\ranktc(\tensX)$ denotes the Tucker rank of $\tensX$. The feasible set $\tensM_{\leq\vecr}$ is referred to as the \emph{Tucker tensor variety}~\cite{gao2025low}. Tensor optimization based on Tucker decomposition appears to be prosperous in a broad range of applications~\cite{tucker1966some,vasilescu2003multilinear,koch2010dynamical,kressner2014low,kasai2016low,gao2025low,gao2025optimization}; see~\cite{kolda2009tensor,uschmajew2020geometric} for an overview.

We aim to develop methods that are guaranteed to find a stationary point $\tensX\in\tensM_{\leq\vecr}$ of~\cref{eq: problem (P)}, where the stationarity denotes that the projection of the anti-gradient $-\nabla f(\tensX)$ to the tangent cone $\tangent_\tensX\!\tensM_{\leq\vecr}$ at~$\tensX$ is equal to zero. However, due to the non-smoothness of $\tensM_{\leq\vecr}$, there exist sequences of points in $\tensM_{\leq\vecr}$ such that the projection of anti-gradient converges to zero, but stationarity does not hold at an accumulation point, i.e., the projection of anti-gradient can be non-zero; see an example in~\cite[Example 1]{gao2025low}. Such a phenomenon has been studied and named by \emph{apocalypse} in~\cite{levin2023finding}. In general, developing \emph{apocalypse-free} methods, where all accumulation points of sequences generated by such methods are stationary, turns out to be challenging, even in the matrix case~$d=2$~\cite[Proposition 2.10]{levin2023finding}. 

\paragraph{Related work and motivation}
We start with an overview of the existing methods on the set of bounded-rank matrices $\mathbb{R}_{\leq r}^{m\times n}:=\{\matx\in\mathbb{R}^{m\times n}:\rank(\matx)\leq r\}$. Jain et al.~\cite{jain2014iterative} proposed a projected gradient descent (\PGD) method, also known as the iterative hard thresholding method, in which an iterate is computed through $\matx^{(t+1)}=\proj_{\leq r}(\matx^{(t)}-s^{(t)}\nabla f(\matx^{(t)}))$ with stepsize $s^{(t)}$ and projection $\proj_{\leq r}$ onto $\mathbb{R}_{\leq r}^{m\times n}$. The \PGD~method is apocalypse-free~\cite{olikier2025projected}, but the iteration requires computing singular value decomposition for large matrices, resulting in prohibitively high computational cost. Based on the geometry of $\mathbb{R}_{\leq r}^{m\times n}$, Schneider and Uschmajew~\cite{schneider2015convergence} proposed another projected line search method (\PtwoGD) and a retraction-free variant (\RFD) that implement line search along the projected anti-gradient, which is able to save computational cost. Even though \PtwoGD\ and \RFD\ have accumulation points under typical assumptions---Armijo backtracking line search and compact sublevel set, an accumulation point is not necessary to be stationary if it is rank deficient, i.e., it has rank less than $r$. As a remedy, Olikier et al.~\cite{olikier2023apocalypse,olikier2026low} equipped \PtwoGD~and \RFD~methods with rank reduction, and developed two apocalypse-free methods. Furthermore, Olikier and Absil~\cite{olikier2023first} developed a framework for first-order optimization on general stratified sets of matrices. Note that one can also adopt Riemannian optimization methods~\cite{absil2009optimization,boumal2023intromanifolds} to minimize $f$ on the smooth manifold $\mathbb{R}_{r}^{m\times n}:=\{\matx\in\mathbb{R}^{m\times n}:\rank(\matx)=r\}$ (e.g.,~\cite{shalit2012online,vandereycken2013low}). The methods overlook the rank-deficient points $\mathbb{R}_{<r}^{m\times n}:=\mathbb{R}_{\leq r}^{m\times n}\setminus\mathbb{R}_{r}^{m\times n}$, and the classical convergence results in Riemannian optimization~\cite{boumal2019global} no longer hold if an accumulation point is rank deficient.

Another approach is to recast~\cref{eq: problem (P)} to a \emph{lifted} problem on a (product) manifold via a smooth parametrization, i.e., a parameter space $\tensM$ and a surjection $\phi$ such that $\phi(\tensM)=\mathbb{R}_{\leq r}^{m\times n}$. We refer to minimization of $f\circ\phi$ on $\tensM$ as the lifted problem. For instance, Levin et al.~\cite{levin2023finding} considered the product manifold $\mathbb{R}^{m\times r}\times\mathbb{R}^{n\times r}$ and the parametrization $(\matL,\matR)\mapsto\matL\matR^\top$. Rebjock and Boumal~\cite{rebjock2024optimization} considered a \emph{desingularization} approach to parametrize the matrix varieties. Since second-order stationary points of the lifted problem correspond to stationary points of~\cref{eq: problem (P)}, whereas first-order stationary points do not~\cite{ha2020equivalence,levin2024effect}, Levin et al.~\cite{levin2023finding} proposed a Riemannian trust-region method for the lifted problem.

Low-rank tensor optimization is significantly more complicated than low-rank matrix optimization~\cite{de2008tensor,hillar2013most}. Nevertheless, similar to the matrix case, low-rank tensor optimization mainly has three different geometric approaches: optimization on manifolds, optimization on varieties, and optimization via parametrizations. Since $\tensM_{\vecr}:=\{\tensX\in\mathbb{R}^{n_1\times n_2\times\cdots\times n_d}:\ranktc(\tensX)=\vecr\}$ is a smooth manifold~\cite{uschmajew2013geometry}, Kressner et al.~\cite{kressner2014low} proposed a Riemannian conjugate gradient method. The convergence results in Riemannian optimization~\cite{boumal2019global} are still not applicable since~$\tensM_{\vecr}$ is not closed. Gao et al.~\cite{gao2025low} delved into the geometry of its closure~$\tensM_{\leq\vecr}$, and proposed line-search methods and a  rank-adaptive method. For optimization via a smooth parametrization, Kasai and Mishra~\cite{kasai2016low} considered a parametrization of $\tensM_{\vecr}$ via a quotient manifold. Desingularization approaches for bounded-rank tensors was proposed~\cite{gao2024desingularization}. We refer to~\cite{steinlechner2016riemannian,dong2022new,gao2024riemannian} for tensor optimization in other tensor formats.

For bounded-rank tensors, low-rank tensor optimization still suffers from apocalypse at rank-deficient accumulation points $\tensM_{\leq\vecr}\setminus\tensM_{\vecr}$; see~\cite[Example 1]{gao2025low}. Unfortunately, the aforementioned methods for~\cref{eq: problem (P)} are not apocalypse-free. In fact, provably finding stationary points on the set of bounded-rank tensors remains an open problem~\cite[Example 3.14]{levin2023finding}. One may consider generalizing existing apocalypse-free methods for matrices to tensors. However, the intricate geometry of low-rank tensors hinders generalization, making it much more elusive to circumvent apocalypse than matrices. Specifically, it experiences the following pains.
\begin{enumerate}
    \item Since the Tucker rank is an array rather than an integer, there exist points in $\tensM_{\leq\vecr}\setminus(\tensM_{\vecr}\cup\tensM_{<\vecr})$ such that some modes of a tensor are of full rank and the others are rank deficient. As a consequence, Tucker tensor varieties fail to satisfy the general assumptions in the framework~\cite{olikier2023first} tailored for stratified sets, and the developed theoretical results can not be simply generalized to tensors. In addition, practical optimality conditions for such points remain unknown. 
    \item In contrast with the set of bounded-rank matrices, the metric projection onto the tangent cone of $\tensM_{\leq\vecr}$ does not enjoy a closed-form expression, necessitating approximate projections. 
    \item Optimization methods via parametrizations of $\tensM_{\leq\vecr}$ no longer guarantee to find stationary points~\cite{levin2024effect,gao2024desingularization}. For instance, one may reformulate~\cref{eq: problem (P)} as a lifted problem by directly optimizing over the factors in Tucker format, in a same fashion as matrix case. However, the parametrization introduces non-uniqueness and degeneracies, and second-order stationary points of the lifted problem do not necessarily correspond to stationary points of~\cref{eq: problem (P)}, as shown in~\cite[\S 4.3]{gao2024desingularization}. As a result, the desirable properties of parametrizations of low-rank matrices no longer hold for tensors. 
\end{enumerate}

\paragraph{Contribution}
We propose first-order methods on Tucker tensor varieties that are able to accumulate at stationary points. To this end, two components are central to our methods: appropriate search directions and rank-decreasing mechanisms. 

We revisit the geometry of Tucker tensor varieties and develop an explicit representation of the normal cone of $\tensM_{\leq\vecr}$. Remarkably, the normal cone is a linear space, which allows a practical validation of optimality of~\cref{eq: problem (P)} by projecting the anti-gradient to the normal cone with a closed-form expression. By contrast, projection onto the tangent cone is computationally intractable. We develop an approximate projection onto the tangent cone constructed via singular value decomposition. The proposed approximate projection is proved to satisfy the \emph{angle condition}, paving the way for first-order methods with guaranteed convergence. 

Subsequently, we propose a gradient-related approximate projection method with rank decrease (GRAP-R), which mainly consists of two steps: rank-decreasing mechanism and line search; see a general framework in~\cref{fig: general illustration}. Specifically, given a point $\tensX^{(t)}\in\tensM_{\leq\vecr}$, the mechanism monitors the singular values of unfolding matrices of $\tensX^{(t)}$ along each mode, and truncates $\tensX^{(t)}$ to lower-rank candidates if rank deficiency is detected. Moreover, we perform retraction-based line searches, which preserve feasibility, along approximate projections at every lower-rank candidate. The next iterate $\tensX^{(t+1)}$ is selected from all candidates that attains the lowest function value after line search.

\begin{figure}[htbp]
    \centering
    \includegraphics[width=\linewidth]{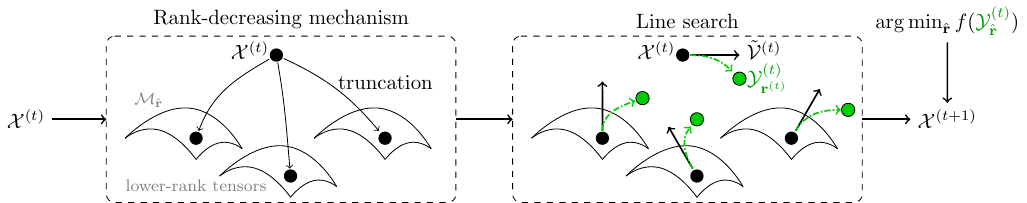}
    \caption{A general framework of proposed first-order methods. The surfaces represent sets of fixed-rank tensors, and $\tilde{\tensV}^{(t)}$ is a search direction. Green points are generated from lower-rank candidates via line search.}
    \label{fig: general illustration}
\end{figure}

By integrating the rank-decreasing mechanism with line search, we prove that the GRAP-R method converges to a stationary point through a \emph{sufficient decrease condition} defined in a neighborhood of a non-stationary point; see a roadmap that achieves the condition in~\cref{fig: proof sketch}. Specifically, the rank-decreasing mechanism enables the GRAP-R method to consider the nearby rank-deficient points where apocalypse often occurs. Line search along approximate projections contributes to an Armijo condition. Both steps protect the sequences generated by GRAP-R from converging to non-stationary points, which renders GRAP-R apocalypse-free.

\begin{figure}[htbp]
    \centering
    \includegraphics[width=\textwidth]{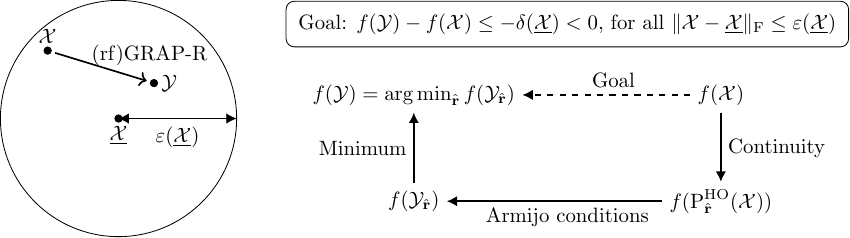}
    \caption{Definition and validation of the sufficient decrease condition in convergence analysis. In a neighborhood of a non-stationary point $\underline{\tensX}$, the function value decreases by more than a positive constant $\delta(\underline{\tensX})$ from $\tensX$ to $\tensY$. The roadmap illustrates how the (rf)GRAP-R method achieves the sufficient decrease condition.}
    \label{fig: proof sketch}
\end{figure}

Alternatively, we consider retraction-free search directions whose line searches never leave $\tensM_{\leq\vecr}$. In a similar manner, we adopt the framework in~\cref{fig: general illustration} and propose a new retraction-free GRAP-R method (rfGRAP-R). In contrast with the GRAP-R method where all singular values of unfolding matrices are required, the rfGRAP-R method only detects the smallest singular value of unfolding matrices, which is in favor of saving computational costs. We prove that the rfGRAP-R method also converges to stationary points by verifying a sufficient decrease condition in~\cref{fig: proof sketch}. 

In addition, numerical results validate that the proposed methods indeed converge to stationary points. In practice, if the prescribed rank is exactly the true rank of a dataset, the GRAP-R method performs better than the rfGRAP-R method. If the rank is over-estimated or the rank parameter becomes large, the rfGRAP-R method performs better than GRAP-R, since the retraction-free property is able to reduce the computational cost by avoiding exhaustive exploration of multiple lower-rank candidates.

\paragraph{Organization}
We introduce the geometry of Tucker tensor varieties and notation for tensor operations in~\cref{sec: Preliminaries}. In~\cref{sec: normal cone and approj}, we provide an explicit representation of the normal cone of Tucker tensor varieties, and propose approximate projections. Based on two different projections, we propose two first-order methods and prove that all accumulation points of sequences generated by these methods are stationary in~\cref{sec: GRAP-R,sec: rfGRAP-R}. Section~\ref{sec: experiments} reports the numerical performance of proposed methods in tensor completion. Finally, we draw the conclusion in~\cref{sec: conclusion}.

\section{Geometry of Tucker tensor varieties}\label{sec: Preliminaries}
In this section, we introduce the preliminaries of variational analysis for low-rank matrices. Then, we describe the notation in tensor operations and Tucker tensor varieties.

Given a nonempty subset $C$ of $\mathbb{R}^{n_1\times n_2\times\cdots\times n_d}$, the (Bouligand) \emph{tangent cone} of~$C$ at $\tensX\in C$ is defined by $\tangent_\tensX\!C:=\{\tensV\in\mathbb{R}^{n_1\times n_2\times\cdots\times n_d}:\exists t^{(i)}\to 0,\ \tensX^{(i)}\to \tensX\text{ in }C,\ \subjectto\ (\tensX^{(i)}-\tensX)/t^{(i)}\to\tensV\}$. The set $\normal_\tensX\! C=(\tangent_\tensX\! C)^\circ:=\{\tensW\in\mathbb{R}^{n_1\times n_2\times\cdots\times n_d}:\langle\tensW,\tensV\rangle\leq 0\ \text{for all}\ \tensV\in\tangent_\tensX\! C\}$ is called the \emph{normal cone} of $C$ at $\tensX$. Note that if~$C$ is a manifold, the tangent cone $\tangent_\tensX\!C$ (normal cone $\normal_\tensX\!C$) is a linear space and referred to as the tangent space (normal space). Given a continuously differentiable function $f$ on~$C$, a point $x\in C$ is stationary of $f$ if $\|\proj_{\tangent_x\!C}(-\nabla f(x))\|_\frob=0$, or $-\nabla f(x)\in\normal_x\!C$ equivalently.

\subsection{Notation for matrix computations}\label{subsec: low-rank matrix}
Let $m,n,r$ be positive integers satisfying $r\leq\min\{m,n\}$. Given a matrix $\matx\in\mathbb{R}^{m\times n}$, the image of $\matx$ and its orthogonal complement are defined by $\Span(\matx):=\{\matx\vecy:\vecy\in\mathbb{R}^n\}\subseteq\mathbb{R}^m$ and $\Span(\matx)^\perp:=\{\vecy\in\mathbb{R}^m:\langle\vecx,\vecy\rangle=0\ \text{for all}\ \vecx\in\Span(\matx)\}$ respectively. The set $\St(r,n):=\{\matx\in\mathbb{R}^{n\times r}:\matx^\top\matx=\matI_r\}$ is the \emph{Stiefel manifold}. Given $\underline{\matx}\in\mathbb{R}^{m\times n}$ and $\Delta>0$, we denote the closed ball $B[\underline{\matx},\Delta]:=\{\matx\in\mathbb{R}^{m\times n},\|\matx-\underline{\matx}\|_\frob\leq\Delta\}$. The singular values of $\matx\in\mathbb{R}^{m\times n}$ is denoted by $\sigma_1\geq\sigma_2\geq\cdots\geq\sigma_{\min\{m,n\}}$. The smallest non-zero singular value of $\matx$ is denoted by $\sigma_{\min}(\matx)=\sigma_{\rank(\matx)}(\matx)$. The $\Delta$-rank of $\matx$ is defined by $\rank_\Delta\matx=\min\{i:\sigma_{i+1}\leq\Delta\}$. 
\begin{proposition}[{\cite[Proposition 2.5]{olikier2026low}}]\label{prop: rank delta}
    Given $\underline{\matx}\in\mathbb{R}^{m\times n}_{\underline{r}}$ and $\Delta>0$. It holds that $\rank_\Delta\matx\leq\underline{r}$ for all $\matx\in B[\underline{\matx},\Delta]$. If $\underline{\matx}\neq 0$, it holds that $\underline{r}\leq \rank(\matx)$ for all $\matx\in\mathbb{R}^{m\times n}$ and $\|\matx-\underline{\matx}\|_\frob<\sigma_{\min}(\underline{\matx})$.
\end{proposition}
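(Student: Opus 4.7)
The plan is to derive both claims from Weyl's perturbation inequality for singular values, which guarantees
\[
|\sigma_i(\matx) - \sigma_i(\underline{\matx})| \leq \|\matx - \underline{\matx}\|_{\op} \leq \|\matx - \underline{\matx}\|_\frob
\]
for every index $i \in \{1, \dots, \min\{m,n\}\}$, with the convention $\sigma_j = 0$ for $j > \min\{m,n\}$. This classical bound reduces everything to reading off the singular values of $\underline{\matx}$ at the right indices.

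For the first claim, I would start from the assumption $\rank(\underline{\matx}) = \underline{r}$, which forces $\sigma_{\underline{r}+1}(\underline{\matx}) = 0$. Plugging $i = \underline{r}+1$ into Weyl yields $\sigma_{\underline{r}+1}(\matx) \leq \|\matx - \underline{\matx}\|_\frob \leq \Delta$ for every $\matx \in B[\underline{\matx},\Delta]$. By the definition $\rank_\Delta \matx = \min\{i : \sigma_{i+1}(\matx) \leq \Delta\}$, this immediately gives $\rank_\Delta \matx \leq \underline{r}$. A small bookkeeping step is to check the edge case $\underline{r} = \min\{m,n\}$, where the inequality becomes vacuous and one sets $\sigma_{\underline{r}+1} = 0$ by convention.

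For the second claim, I would instead apply Weyl at index $i = \underline{r}$. Here $\sigma_{\underline{r}}(\underline{\matx}) = \sigma_{\min}(\underline{\matx}) > 0$ because $\underline{\matx} \neq 0$ has rank exactly $\underline{r}$. The perturbation bound then yields
\[
\sigma_{\underline{r}}(\matx) \geq \sigma_{\min}(\underline{\matx}) - \|\matx - \underline{\matx}\|_\frob > 0
\]
whenever $\|\matx - \underline{\matx}\|_\frob < \sigma_{\min}(\underline{\matx})$. Since the $\underline{r}$-th singular value of $\matx$ is strictly positive, $\rank(\matx) \geq \underline{r}$.

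There is essentially no conceptual obstacle here; the statement is a textbook consequence of Weyl's inequality together with the definition of $\rank_\Delta$. The only care needed is to be consistent with the convention $\sigma_j(\cdot) = 0$ for indices exceeding $\min\{m,n\}$ and to distinguish the non-strict inequality $\leq \Delta$ (used in the first claim) from the strict inequality $< \sigma_{\min}(\underline{\matx})$ (which is what rules out rank drop in the second claim).
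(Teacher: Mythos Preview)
Your proof is correct. Note, however, that the paper does not supply its own proof of this proposition: it is quoted verbatim from \cite[Proposition 2.5]{olikier2024low} and stated without argument. Your Weyl-inequality derivation is the standard route to this result and is almost certainly what the cited reference does as well, so there is no meaningful divergence to compare.
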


\subsection{Geometry of tensors in Tucker decomposition}
We introduce notation for tensor operations. Denote the index set $\{1,2,\dots,n\}$ by~$[n]$. The inner product between two tensors $\tensX,\tensY\in\mathbb{R}^{n_1\times n_2\times\cdots\times n_d}$ is defined by $\langle\tensX,\tensY\rangle := \sum_{i_1=1}^{n_1} \cdots \sum_{i_d=1}^{n_d} \tensX({i_1,\dots,i_d})\tensY({i_1,\dots,i_d})$. The Frobenius norm of a tensor $\tensX$ is defined by $\|\tensX\|_\mathrm{F}:=\sqrt{\langle\tensX,\tensX\rangle}$. The mode-$k$ unfolding of a tensor $\tensX \in \mathbb{R}^{n_1 \times\cdots\times n_d}$ is denoted by a matrix $\matx_{(k)}\in\mathbb{R}^{n_k\times n_{-k}} $ for $k=1,\dots,d$, where $n_{-k}:=\prod_{i\neq k}n_i$. The $ (i_1,i_2,\dots,i_d)$-th entry of $\tensX$ corresponds to the $(i_k,j)$-th entry of $\matx_{(k)}$, where
$ j = 1 + \sum_{\ell \neq k, \ell = 1}^d(i_\ell-1)J_\ell$ with $J_\ell = \prod_{m = 1,m \neq k}^{\ell-1} n_m$. The tensorization operator maps a matrix $\matx_k\in\mathbb{R}^{n_k\times n_{-k}}$ to a tensor $\ten_{(k)}(\matx_k)\in\mathbb{R}^{n_1\times\cdots\times n_d}$ defined by $\ten_{(k)}(\matx_k)(i_1,\dots,i_d)=\matx_k(i_k,1 + \sum_{\ell \neq k, \ell = 1}^d(i_\ell-1)J_\ell)$ for $(i_1,\dots,i_d)\in[n_1]\times\cdots\times[n_d]$. Note that $\ten_{(k)}(\matx_{(k)})=\tensX$ holds for fixed $n_1,\dots,n_d$. Therefore, the tensorization operator is invertible. The $k$-mode product of a tensor $\tensX$ and a matrix $\mata\in\mathbb{R}^{M\times n_k}$ is denoted by $\tensX\times_k\mata\in\mathbb{R}^{n_1\times\cdots\times M\times\cdots\times n_d}$, where the $ (i_1,\dots,i_{k-1},j,i_{k+1},\dots,i_d)$-th entry of $\tensX\times_k\mata$ is $\sum_{i_k=1}^{n_k}x_{i_1\dots i_d}a_{ji_k}$. It holds that $(\tensX\times_k\mata)_{(k)}=\mata\matx_{(k)}$. The Kronecker product of two matrices $\mata\in\mathbb{R}^{m_1\times n_1}$ and $\matb\in\mathbb{R}^{m_2\times n_2}$ is an $(m_1m_2)$-by-$(n_1n_2)$ matrix defined by $\mata\otimes\matb:=(a_{ij}\matb)_{ij}$. Given two vectors $\vecx,\vecy\in\mathbb{R}^d$, we denote $\vecx\leq\vecy$ ($\vecx<\vecy$) if $x_i\leq y_i$ ($x_i<y_i$) for all $i\in[d]$.

\begin{definition}[Tucker decomposition]
    Given a tensor $\tensX \in \mathbb{R}^{n_1 \times n_2\times\cdots\times n_d}$, 
    the Tucker decomposition is \[\tensX =\tensG\times_1\matu_1\times_2\matu_2\cdots\times_d\matu_d=\tensG\times_{k=1}^d\matu_k,\]
    where $\tensG\in\mathbb{R}^{r_1 \times r_2\times\cdots\times r_d}$ is a core tensor, $\matu_k\in\St(r_k,n_k)$ are factor matrices with orthogonal column and $r_k=\rank(\matx_{(k)})$ for all $k\in[d]$. 
\end{definition}

The Tucker rank of a tensor $\tensX$ is defined by $$\ranktc(\tensX):=(\rank(\matx_{(1)}),\rank(\matx_{(2)}),\dots,\rank(\matx_{(d)})).$$ 
Figure~\ref{fig: 3D Tucker} depicts the Tucker decomposition of a third-order tensor. For a $d$-th order tensor $\tensA$, it holds that $\tensA\in\otimes_{k=1}^d\Span(\matu_k)$ if and only if there exists $\tensC\in\mathbb{R}^{r_1\times r_2\times\cdots\times r_d}$ such that $\tensA=\tensC\times_{k=1}^d\matu_k$. Note that the mode-$k$ unfolding of $\tensX=\tensG\times_{k=1}^d\matu_k$ satisfies 
\[\matx_{(k)}=\matu_k\matG_{(k)}\left(\matu_d\otimes\cdots\otimes\matu_{k+1}\otimes\matu_{k-1}\otimes\cdots\otimes\matu_{1}\right)^\top=\matu_k\matG_{(k)}((\matu_j)^{\otimes j\neq k})^\top.\]

\begin{figure}[htbp]
    \centering
    \includegraphics[scale=0.833]{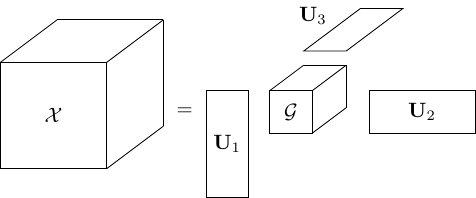}
    \caption{Tucker decomposition of a third-order tensor.}
    \label{fig: 3D Tucker}
\end{figure}

\paragraph{Projection onto $\tensM_{\leq\vecr}$}
Given a tensor $\tensA\in\mathbb{R}^{n_1\times n_2\times\cdots\times n_d}$, the metric projection of~$\tensA$ onto Tucker tensor variety $\tensM_{\leq\vecr}$ is defined by
\begin{equation*}
    \proj_{\leq\vecr}(\tensA):=\argmin_{\tensX\in\tensM_{\leq\vecr}}\|\tensA-\tensX\|_\frob^2.
\end{equation*}
In contrast with the matrix case where the metric projection can be computed by singular value decomposition, $\proj_{\leq\vecr}(\tensA)$ does not have a closed-form expression in general~\cite{de2000multilinear}. Nevertheless, one can apply \emph{higher-order singular value decomposition} (HOSVD) to yield a quasi-optimal solution. Specifically, the HOSVD procedure sequentially applies the best rank-$r_k$ approximation operator $\proj^k_{\leq r_k}$ to each mode of $\tensA$ for $k\in[d]$, i.e,
\begin{equation}
    \label{eq: HOSVD}
    \proj_{\leq\vecr}^\mathrm{HO}(\tensA):=\proj_{\leq r_d}^d(\proj_{\leq r_{d-1}}^{d-1}\cdots(\proj_{\leq r_1}^1(\tensA))),
\end{equation}
where $\proj_{\leq r_k}^k(\tensA)=\ten_{(k)}(\proj_{\leq r_k}(\mata_{(k)}))$. The quasi-optimality~\cite[Lemma 2.6]{grasedyck2010hierarchical}
\begin{equation}
    \label{eq: quasi-optimal HOSVD}
    \|\tensA-\proj_{\leq\vecr}^{\ho}(\tensA)\|_\frob\leq\sqrt{d}\|\tensA-\proj_{\leq\vecr}(\tensA)\|_\frob
\end{equation}
enables HOSVD to be an approximate projection onto $\tensM_{\leq\vecr}$.

A mapping $\retr:\bigcup_{\tensX\in \tensM_{\leq\vecr}}\{\tensX\}\times\tangent_\tensX\!\tensM_{\leq\vecr}\to \tensM_{\leq\vecr}$ is called a \emph{retraction}~\cite[\S 3.1.2]{hosseini2019gradient} if for all $\tensX\in\tensM_{\leq\vecr}$ and $\tensV\in\tangent_\tensX\!\tensM_{\leq\vecr}$, it holds that $\lim_{t\to 0^+}(\retr_\tensX(t\tensV)-\tensX-t\tensV)/t=0$. It follows from~\cite[Proposition 2]{gao2025low} that $\proj_{\leq\vecr}^{\ho}$ provides a retraction on $\tensM_{\leq\vecr}$.

\paragraph{Tucker tensor varieties}
Given a tuple $\vecr=(r_1,r_2,\dots,r_d)$, the set of tensors of bounded Tucker rank $\tensM_{\leq\vecr}=\{\tensX\in\mathbb{R}^{n_1\times n_2\times \cdots\times n_d}:\ranktc(\tensX)\leq\vecr\}$ is referred to as the Tucker tensor variety. Given a tensor $\tensX=\tensG\times_{k=1}^d\matu_k$ with $\ranktc(\tensX)=\underline{\vecr}\leq\vecr$, a~vector $\tensV$ in the tangent cone $\tangent_\tensX\!\tensM_{\leq\vecr}$ has an explicit parametrization~\cite[Theorem~1]{gao2025low}
\begin{equation}
    \label{eq: Tucker tangent cone}
        \tensV=\tensC\times_{k=1}^d\begin{bmatrix}
            \matu_k & \matu_{k,1}
        \end{bmatrix}+\sum_{k=1}^d\tensG\times_k\dot{\matu}_k\times_{j\neq k}\matu_j,
\end{equation}
where $\tensC\in\mathbb{R}^{r_1\times r_2\times\cdots\times r_d}$, $\matu_{k,1}\in\St(r_k-\underline{r}_k,n_k)$, and $\dot{\matu}_k\in\mathbb{R}^{n_k\times\underline{r}_k}$ are arbitrary that satisfy $\matu_k^\top[\matu_{k,1}^{}\ \dot{\matu}_k^{}]=0$ and $\matu_{k,1}^\top\dot{\matu}_k^{}=0$ for $k\in[d]$.

\section{Normal cones and approximate projections}\label{sec: normal cone and approj}
In this section, we develop an explicit representation of the normal cone, providing practical optimality conditions of~\cref{eq: problem (P)}. To facilitate provable first-order methods, we also develop approximate projections satisfying the angle conditions.

\subsection{Normal cones and optimality conditions}
Given $\tensX=\tensG\times_{k=1}^d\matu_k\in\tensM_{\leq\vecr}$ with $\underline{\vecr}=\ranktc(\tensX)$ and $\tangent_\tensX\!\tensM_{\leq\vecr}$ in~\cref{eq: Tucker tangent cone}, an element $\tensW\in\mathbb{R}^{n_1\times n_2\times \cdots\times n_d}$ in the normal cone $\normal_\tensX\!\tensM_{\leq\vecr}$ satisfies that
\[\langle\tensW,\tensV\rangle=\langle\tensW,\tensC\times_{k=1}^d\begin{bmatrix}
        \matu_k & \matu_{k,1}
    \end{bmatrix}+\sum_{k=1}^d\tensG\times_k\dot{\matu}_k\times_{j\neq k}\matu_j\rangle\leq 0\]
for arbitrary $\tensC\in\mathbb{R}^{r_1\times r_2\times\cdots\times r_d}$, $\matu_{k,1}\in\St(r_k-\underline{r}_k,n_k)$, and $\dot{\matu}_k\in\mathbb{R}^{n_k\times\underline{r}_k}$ satisfying $\matu_k^\top[\matu_{k,1}^{}\ \dot{\matu}_k^{}]=0$ and $\matu_{k,1}^\top\dot{\matu}_k^{}=0$ for $k\in[d]$. Since $-\tensV=(-\tensC)\times_{k=1}^d\begin{bmatrix}
    \matu_k & \matu_{k,1}
\end{bmatrix}+\sum_{k=1}^d\tensG\times_k(-\dot{\matu}_k)\times_{j\neq k}\matu_j\in\tangent_\tensX\!\tensM_{\leq\vecr}$, we have $\langle\tensW,\tensV\rangle=0$. By letting one factor free while setting all the remaining factors to zero in $(\tensC,\dot{\matu}_{1}, \dot{\matu}_{2}, \dots, \dot{\matu}_{d})$, we obtain that
\[\langle\tensW,\tensC\times_{k=1}^d\begin{bmatrix}
        \matu_k & \matu_{k,1}
        \end{bmatrix}\rangle=0\quad\text{and}\quad\langle\tensW,\tensG\times_k\dot{\matu}_k\times_{j\neq k}\matu_j\rangle=0\]
for all $k\in[d]$.

Denote the set of rank-deficient index \[I=\{k\in[d]:\underline{r}_k<r_k\}.\] 
We observe that $\tensC\times_{k\in I}\begin{bmatrix}
    \matu_k & \matu_{k,1}
\end{bmatrix}\times_{k\in[d]\setminus I}\matu_k$ can represent any rank-$1$ tensor in $S_1\otimes S_2\otimes\cdots\otimes S_d$ with $S_k=\Span(\matu_{k})$ if $r_k=\underline{r}_k$ or $S_k=\mathbb{R}^{n_{k}}$ otherwise. Therefore, $\langle\tensW,\tensC\times_{k=1}^d\begin{bmatrix}
        \matu_k & \matu_{k,1}
        \end{bmatrix}\rangle=0$ 
is equivalent to $\tensW\in(S_1\otimes S_2\otimes\cdots\otimes S_d)^\perp$. Additionally, the tensor~$\tensW$ satisfies $\langle\tensW,\tensG\times_k\dot{\matu}_k\times_{j\neq k}\matu_j\rangle=0$ for all $\dot{\matu}_k$, i.e., $\proj_{\matu_k}^\perp\!\matW_{(k)}\matv_k\matG_{(k)}^\top=0$ for all $k\in[d]$, where the projection matrix $\proj_{\matu_k}^\perp=\matI_{n_k}-\matu_k^{}\matu_k^\top$ and $\matv_k=\matu_j^{\otimes j\neq k}$. If~$\underline{r}_k<r_k$, it follows from $\Span(\matu_k)^\perp\subseteq\mathbb{R}^{n_k}=S_k$ that $\otimes_{j=1}^{k-1}\Span(\matu_j)\otimes\Span(\matu_k)^\perp\otimes_{j=k+1}^{d}\Span(\matu_j)\subseteq S_1\otimes S_2\otimes\cdots\otimes S_d$, i.e., $\tensW\in(S_1\otimes S_2\otimes\cdots\otimes S_d)^\perp$ indicates $\proj_{\matu_k}^\perp\!\matW_{(k)}\matv_k\matG_{(k)}^\top=0$ for~$k\in I$. Consequently, in the light of the decomposition $\mathbb{R}^{n_1\times n_2\times\cdots\times n_d}=\otimes_{k=1}^d(\Span(\matu_k)+\Span(\matu_k)^\perp)$, the normal cone can be represented by the following proposition.
\begin{proposition}[normal cone]
    Given $\tensX=\tensG\times_{k=1}^d\matu_k\in\tensM_{\leq\vecr}$ with $\underline{\vecr}:=\ranktc(\tensX)$, we denote the index set $I=\{k\in[d]:\underline{r}_k<r_k\}$. The normal cone of $\tensM_{\leq\vecr}$ at $\tensX$ can be represented by
    \[\normal_\tensX\!\tensM_{\leq\vecr}=(S_1\otimes S_2\otimes\cdots\otimes S_d)^\perp\cap\{\tensW\in\mathbb{R}^{n_1\times\cdots\times n_d}:\proj_{\matu_k}^\perp\!\matW_{(k)}\matv_k\matG_{(k)}^\top=0,\ k\in[d]\setminus I\},    
    \]
    where $S_k=\Span(\matu_{k})$ if $k\in[d]\setminus I$ or $S_k=\mathbb{R}^{n_{k}}$ otherwise. More precisely, an element $\tensW\in\normal_\tensX\!\tensM_{\leq\vecr}$ can be explicitly parametrized by
    \[\tensW=\sum_{i_1,i_2,\dots,i_d=0}^1(\sum_{k\in[d]\setminus I}i_k^2)\tensC_{i_1,\dots,i_d}\times_{k=1}^d\matu_k^{(i_k)},\]
    where $\tensC_{i_1,\dots,i_d}\in\mathbb{R}^{\tilde{n}_1\times\tilde{n}_2\times\cdots\times\tilde{n}_d}$ is arbitrary satisfying $(\tensC_{i_1,\dots,i_d})_{(k)}\matG_{(k)}^\top=0$ if $i_k=1$ and $i_j=0$ for $j\in[d]\setminus\{k\}$, $\tilde{n}_k=(1-i_k)\underline{r}_k+i_k(n_k-\underline{r}_k)$; $\matu_k^{(0)}:=\matu_k$ and $\matu_k^{(1)}:=\matu_k^\perp\in\St(n_k-\underline{r}_k,n_k)$ and $\Span(\matu_k^\perp)=\Span(\matu_k)^\perp$.
\end{proposition}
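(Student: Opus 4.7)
The plan is to formalize the chain of implications already sketched in the paragraphs preceding the proposition, and then translate the abstract intersection characterization into the explicit block decomposition.

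First I would pin down the intersection formula. Starting from the tangent cone parametrization~\cref{eq: Tucker tangent cone}, a normal vector $\tensW\in\normal_\tensX\!\tensM_{\leq\vecr}$ must satisfy $\langle\tensW,\tensV\rangle\leq 0$ for every $\tensV\in\tangent_\tensX\!\tensM_{\leq\vecr}$. Since the tangent cone is invariant under negation of the free linear parameters $\tensC$ and $\dot{\matu}_k$, the defining inequalities strengthen to equalities. As $(\tensC,\matu_{k,1})$ range over their admissible sets, the linear span of the tensors $\tensC\times_{k=1}^d[\matu_k\ \matu_{k,1}]$ is precisely $V_1\otimes\cdots\otimes V_d$: if $k\notin I$ then $r_k=\underline{r}_k$ so $\matu_{k,1}$ is empty and only $\Span(\matu_k)$ contributes, while if $k\in I$ the admissible $\matu_{k,1}$ collectively sweep every direction of $\Span(\matu_k)^\perp$, giving all of $\mathbb{R}^{n_k}$. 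Hence the first equality is exactly $\tensW\in(V_1\otimes\cdots\otimes V_d)^\perp$. Unfolding along mode $k$, the second equality reads $\tr(\dot{\matu}_k^\top\matW_{(k)}\matv_k\matG_{(k)}^\top)=0$ for every $\dot{\matu}_k$ with $\matu_k^\top\dot{\matu}_k=0$, which is equivalent to $\proj_{\matu_k}^\perp\!\matW_{(k)}\matv_k\matG_{(k)}^\top=0$. For $k\in I$ this is already forced by membership in $(V_1\otimes\cdots\otimes V_d)^\perp$, since $\Span(\matu_k)^\perp\subseteq V_k=\mathbb{R}^{n_k}$; so only the constraints for $k\in[d]\setminus I$ carry new information. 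Conversely, any $\tensW$ satisfying both families of equalities annihilates every tangent vector by~\cref{eq: Tucker tangent cone}, yielding sufficiency.

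Next I would derive the explicit parametrization by expanding $\tensW$ in the orthogonal splittings $\mathbb{R}^{n_k}=\Span(\matu_k)\oplus\Span(\matu_k)^\perp$ for all modes simultaneously, producing the unique block decomposition
\[\tensW=\sum_{i_1,\ldots,i_d=0}^1\tensC_{i_1,\ldots,i_d}\times_{k=1}^d((1-i_k)\matu_k+i_k\matu_k^\perp),\]
with core blocks $\tensC_{i_1,\ldots,i_d}\in\mathbb{R}^{\tilde n_1\times\cdots\times\tilde n_d}$ of the stated dimensions. The subspace $V_1\otimes\cdots\otimes V_d$ is the sum of the blocks whose tuples satisfy $i_k=0$ for all $k\notin I$, so its orthogonal complement is the sum of blocks with $\sum_{k\notin I}i_k\geq 1$, equivalently $\sum_{k\notin I}i_k^2\geq 1$ since $i_k\in\{0,1\}$; this explains the prefactor in the stated formula, because any positive value is harmlessly absorbed into the arbitrary core $\tensC_{i_1,\ldots,i_d}$. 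For the second condition I would compute $\proj_{\matu_k}^\perp\!\matW_{(k)}\matv_k$ block by block: the left projection annihilates every block with $i_k=0$, while right multiplication by $\matv_k$ invokes $((1-i_j)\matu_j+i_j\matu_j^\perp)^\top\matu_j=(1-i_j)\matI$, which kills every block having some $i_j=1$ for $j\neq k$. Only the block indexed by $(i_1,\ldots,i_d)$ with $i_k=1$ and $i_j=0$ for $j\neq k$ survives, so $\proj_{\matu_k}^\perp\!\matW_{(k)}\matv_k=\matu_k^\perp(\tensC_{i_1,\ldots,i_d})_{(k)}$; since $\matu_k^\perp$ has orthonormal columns, the constraint collapses to $(\tensC_{i_1,\ldots,i_d})_{(k)}\matG_{(k)}^\top=0$, matching the restriction stated in the proposition.

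The most delicate step will be the span argument for $\matu_{k,1}$ when $k\in I$: one must argue that every unit vector of $\Span(\matu_k)^\perp$ occurs as a column of some admissible Stiefel factor $\matu_{k,1}\in\St(r_k-\underline{r}_k,n_k)$ with columns in $\Span(\matu_k)^\perp$, which holds whenever $r_k-\underline{r}_k\geq 1$. Once this span identification is secured, the remaining work is bookkeeping between the tangent cone parametrization and the orthogonal block decomposition, both of which are heavy in indices but otherwise mechanical.
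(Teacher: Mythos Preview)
Your proposal is correct and follows essentially the same approach as the paper: the paper's argument is precisely the chain of implications in the paragraphs preceding the proposition (negation symmetry in $\tensC,\dot{\matu}_k$ upgrades inequalities to equalities, the $\tensC$-term identifies $(V_1\otimes\cdots\otimes V_d)^\perp$, the $\dot{\matu}_k$-term gives $\proj_{\matu_k}^\perp\!\matW_{(k)}\matv_k\matG_{(k)}^\top=0$ with redundancy for $k\in I$, and the explicit parametrization comes from the orthogonal splitting $\mathbb{R}^{n_k}=\Span(\matu_k)\oplus\Span(\matu_k)^\perp$). Your block-by-block computation of $\proj_{\matu_k}^\perp\!\matW_{(k)}\matv_k$ and your remark that the prefactor $\sum_{k\notin I}i_k^2$ is absorbed into the arbitrary core flesh out details the paper leaves implicit, but the logical skeleton is identical.
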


It is worth noting that: 1) The normal cone $\normal_\tensX\!\tensM_{\leq\vecr}$ is indeed a linear space for all $\tensX\in\tensM_{\leq\vecr}$ since it is the intersection of two linear spaces; 2) $(\sum_{k\in[d]\setminus I}i_k^2)$ is an indicator that only selects components contributing to the normal space, which excludes tensors in $S_1\otimes S_2\otimes\cdots\otimes S_d$; 3) $\tensC_{i_1,\dots,i_d}\times_{k=1}^d\matu_k^{(i_k)}$ is a tensor in the tensor product space indexed by $(i_1,\dots,i_d)$. Specifically, recall that $\mathbb{R}^{n_1\times n_2\times\cdots\times n_d}=\otimes_{k=1}^d(\Span(\matu_k)+\Span(\matu_k)^\perp)$, we decompose the space into $2^d$ orthogonal spaces indexed by $(i_1,\dots,i_d)\in\{0,1\}^d$, where $i_k=0$ implies $\Span(\matU_k)$ and $i_k=1$ implies $\Span(\matU_k)^\perp$.~\footnote{For instance, the indices $(0,0,\dots,0)$ and $(1,0,\dots,0)$ correspond to the tensor product space $\otimes_{k=1}^d\Span(\matu_k)$ and $\Span(\matu_1)^\perp\otimes\Span(\matu_2)\otimes\cdots\otimes\cdots\Span(\matu_d)$.}
In particular, an element $\tensW\in\normal_\tensX\!\tensM_{\leq\vecr}$ for $d=3$ can be explicitly parametrized by elements in $(\Span(\matu_1)+\Span(\matu_1^\perp))\otimes(\Span(\matu_2)+\Span(\matu_2^\perp))\otimes(\Span(\matu_3)+\Span(\matu_3^\perp))$:
\[
    \tensW=\left\{
        \begin{array}{ll}
            0 & \text{if }I=\{1,2,3\}, \\
            \sum_{i_1,i_2=0}^1\tensC_{i_1,i_2,1}\times_3\matu_3^\perp\times_{k\in I}((1-i_k)\matu_k+i_k\matu_k^\perp) & \text{if }I=\{1,2\}, \\
            \sum_{i_2,i_3=0}^1\tensC_{1,i_2,i_3}\times_1\matu_1^\perp\times_{k\in I}((1-i_k)\matu_k+i_k\matu_k^\perp) & \text{if }I=\{2,3\}, \\
            \sum_{i_1,i_3=0}^1\tensC_{i_1,1,i_3}\times_2\matu_2^\perp\times_{k\in I}((1-i_k)\matu_k+i_k\matu_k^\perp) & \text{if }I=\{1,3\}, \\
            \sum_{i_1,i_2,i_3=0}^1(i_2^2+i_3^2)\tensC_{i_1,i_2,i_3}\times_{k=1}^d((1-i_k)\matu_k+i_k\matu_k^\perp) & \text{if }I=\{1\}, \\
            \sum_{i_1,i_2,i_3=0}^1(i_1^2+i_3^2)\tensC_{i_1,i_2,i_3}\times_{k=1}^d((1-i_k)\matu_k+i_k\matu_k^\perp) & \text{if }I=\{2\}, \\
            \sum_{i_1,i_2,i_3=0}^1(i_1^2+i_2^2)\tensC_{i_1,i_2,i_3}\times_{k=1}^d((1-i_k)\matu_k+i_k\matu_k^\perp) & \text{if }I=\{3\}, \\
            \sum_{i_1,i_2,i_3=0}^1(i_1^2+i_2^2+i_3^2)\tensC_{i_1,i_2,i_3}\times_{k=1}^d((1-i_k)\matu_k+i_k\matu_k^\perp) & \text{if }I=\emptyset;
        \end{array}
    \right.
\]
see an illustration in~\cref{fig: normal cones}. The normal cone $\normal_\tensX\!\tensM_{\leq\vecr}$ boils down to the normal space $\normal_\tensX\!\tensM_{\vecr}$ if $\underline{\vecr}=\vecr$ and to $\{0\}$ if $\underline{\vecr}<\vecr$.

\begin{figure}[htbp]
    \centering
    \includegraphics[width=0.32\textwidth]{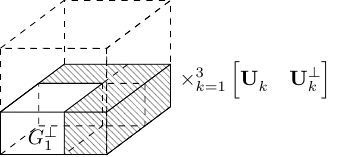}
    \includegraphics[width=0.32\textwidth]{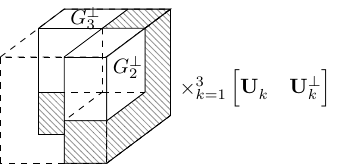}
    \includegraphics[width=0.32\textwidth]{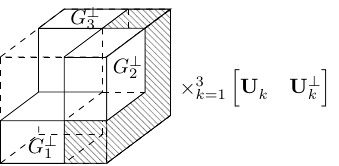}
    \caption{Illustration of a vector in normal cone for $d=3$. $G_k^\perp=\{\tilde{\tensG}:\tilde{\matG}_{(k)}\in\mathbb{R}^{(n_k-\underline{r}_k)\times r_{-k}},\tilde{\matG}_{(k)}^{}\matG_{(k)}^\top=0\}$ for $k\in[3]$. Left: $I=\{2,3\}$. Middle: $I=\{1\}$. Right: $I=\emptyset$.}
    \label{fig: normal cones}
\end{figure}

We observe that $\normal_\tensX\!\tensM_{\leq\vecr}$ is a linear space for all $\tensX\in\tensM_{\leq\vecr}$, which facilitates practical approaches to verify the stationary points of a continuously differentiable function~$f$. Specifically, a point $\tensX^*\in\tensM_{\leq\vecr}$ with $\ranktc(\tensX^*)=\underline{\vecr}^*$ and Tucker decomposition $\tensX^*=\tensG^*\times_{k=1}^d\matu_k^*$ is stationary of $f$ if $-\nabla f(\tensX^*)\in\normal_{\tensX^*}\!\tensM_{\leq\vecr}$, or 
\begin{equation*}
    \nabla f(\tensX^*)\times_{k=1}^d\proj_{S_k^*}=0\ \text{and}\ 
    (\nabla f(\tensX^*)\times_k^{}\proj_{\matu_k^*}^\perp\times_{j\neq k}^{}(\matu_j^*)^\top)_{(k)}^{}
    (\matG_{(k)}^{*})^\top = 0\ \text{for}\ k\in[d]\setminus I
\end{equation*} 
equivalently. For $\underline{\vecr}^*=\vecr$ and $\underline{\vecr}^*<\vecr$, the result boils down to $\proj_{\tangent_{\tensX^*}\!\tensM_{\vecr}}(\nabla f(\tensX^*))=0$ and $\nabla f(\tensX^*)=0$ respectively, which coincides with~\cite[Proposition 3]{gao2025low}. In summary, the optimality condition based on $\normal_\tensX\!\tensM_{\leq\vecr}$ can be verified at every point in $\tensM_{\leq\vecr}$, including points in~$\tensM_{\leq\vecr}\setminus(\tensM_{\vecr}\cup\tensM_{<\vecr})$.

\subsection{Approximate projections}
We consider appropriate search directions via approximate projections. Given a tensor $\tensA\in\mathbb{R}^{n_1\times n_2\times\cdots\times n_d}$, the metric projection of $\tensA$ onto the tangent cone $\tangent_\tensX\!\tensM_{\leq\vecr}$ can be expressed by~\cite[\S 3.3]{gao2025low}
\begin{equation}
    \label{eq: exact projection}
    \proj_{\tangent_\tensX\!\tensM_{\leq\vecr}}\!(\tensA)=\tensA\times_{k=1}^d\proj_{\matS_{k}^*}+\sum_{k=1}^d\tensG\times_k\Big(\proj^\perp_{\matS_{k}^*}(\tensA\times_{j\neq k}^{}\matu_j^\top)_{(k)}\matG_{(k)}^\dagger\Big)\times_{j\neq k}\matu_j,
\end{equation}
where $\matS_{k}^*=[\matu_k\ \matu_{k,1}^*]$, $\proj_{\matS_{k}^*}=\matu_k^{}\matu_k^\top+\matu_{k,1}^*(\matu_{k,1}^*)^\top$, $\proj^\perp_{\matS_{k}^*}=\matI_{n_k}-\proj_{\matS_{k}^*}$, $(\matG_{(k)})^\dagger=\matG_{(k)}^\top(\matG_{(k)}^{}\matG_{(k)}^\top)^{-1}$, and $\{\matu_{k,1}^*\}_{k=1}^d$ is a global minimizer of a non-convex problem such that $\matu_k^\top\matu_{k,1}^*=0$, which is computationally intractable. Nevertheless, by choosing appropriate $\tilde{\matu}_{k,1}\in\St(r_k-\underline{r}_k,n_k)$ to substitute for $\matu_{k,1}^*$, we can construct two different search directions in tangent cones~\cite[\S 3.4 and \S 5.1]{gao2025low}. 

The first direction is an approximate projection
\begin{equation}
    \label{eq: an approximate projection}
    \approj_{\tangent_\tensX\!\tensM_{\leq\vecr}}\!(\tensA):=\tensA\times_{k=1}^d\proj_{\tilde{\matS}_{k}}+\sum_{k=1}^d\tensG\times_k\left(\proj^\perp_{\tilde{\matS}_{k}}\!\left(\tensA\times_{j\neq k}^{}\matu_j^\T\right)_{(k)}\matG_{(k)}^\dagger\right)\times_{j\neq k}\matu_j,
\end{equation}
where $\tilde{\matS}_{k}:=[\matu_k\ \tilde{\matu}_{k,1}]$. The other one is a partial projection
\begin{equation}
    \label{eq: retraction-free projection}
    \hat{\proj}_{\tangent_{\tensX}\!\tensM_{\leq\vecr}}(\tensA):=\argmax_{\tensV\in\{\tilde{\proj}_{0}(\tensA),\proj_1(\tensA)\dots,\proj_d(\tensA)}\}\|\tensV\|_\frob,
\end{equation}
where $\approj_{0}(\tensA):=\tensA\times_{k=1}^d\proj_{\tilde{\matS}_{k}}$, and $\proj_k(\tensA):=\tensG\times_k(\proj_{\matu_{k}}^\perp(\tensA\times_{j\neq k}\matu_j^\T)_{(k)}\matG_{(k)}^\dagger)\times_{j\neq k}\matu_j$ for $k\in[d]$. We observe that 
\[\tensX+\tilde{\proj}_0(\tensA)\in\tensM_{\leq\vecr}\quad\text{and}\quad\tensX+\proj_k(\tensA)\in\tensM_{\leq\underline{\vecr}},\]
i.e., $\tensX+\hat{\proj}_{\tangent_{\tensX}\!\tensM_{\leq\vecr}}(\tensA)\in\tensM_{\leq\vecr}$, which implies that the partial projection $\hat{\proj}_{\tangent_{\tensX}\!\tensM_{\leq\vecr}}$ is a \emph{retraction-free} search direction.

Note that when $\vecr=\ranktc(\tensX)$, the metric projection~\cref{eq: exact projection} boils down to the projection onto the tangent space which is of closed form, where $\tilde{\matu}_{k,1}\in\St(r_k-\underline{r}_k,n_k)$ is not involved. Alternatively, when $\vecr\neq\ranktc(\tensX)$, a specific $\tilde{\matu}_{k,1}$ is required to facilitate the projections~\cref{eq: an approximate projection,eq: retraction-free projection}. One can always construct the projections by choosing arbitrary $\tilde{\matu}_{k,1}$. Alternatively, we choose appropriate $\tilde{\matu}_{k,1}\in\St(r_k-\underline{r}_k,n_k)$ by singular value decomposition to achieve the \emph{angle condition}
\begin{align}
    \label{eq: angle condition GRAP}
    \langle\tensA,\approj_{\tangent_\tensX\!\tensM_{\leq\vecr}}(\tensA)\rangle&\geq\tilde{\omega}\,\|\proj_{\tangent_{\tensX}\!\tensM_{\leq\vecr}}(\tensA)\|_\frob\|\approj_{\tangent_\tensX\!\tensM_{\leq\vecr}}(\tensA)\|_\frob,\\
    \label{eq: angle condition rfGRAP}
    \langle\tensA,\hat{\proj}_{\tangent_{\tensX}\!\tensM_{\leq\vecr}}(\tensA)\rangle&\geq\hat{\omega}\,\|\proj_{\tangent_{\tensX}\!\tensM_{\leq\vecr}}(\tensA)\|_\frob\|\,\hat{\proj}_{\tangent_{\tensX}\!\tensM_{\leq\vecr}}(\tensA)\|_\frob
\end{align}
for all $\tensA\in\mathbb{R}^{n_1\times n_2\times\cdots\times n_d}$ and some $\tilde{\omega},\hat{\omega}\in(0,1]$ as follows. 

\begin{lemma}\label{lem: inequality}
    Given a tensor $\tensA\in\mathbb{R}^{n_1\times n_2\times \cdots\times n_d}$ and $\tensX\in\tensM_{\leq\vecr}$ with $\ranktc(\tensX)=\underline{\vecr}\neq \vecr$ and Tucker decomposition $\tensX=\tensG\times_{k=1}^d\matu_k$. Recall $I=\{k\in[d]:\underline{r}_k<r_k\}$. There exists $\tilde{\matu}_{k,1}\in\St(r_k-\underline{r}_k,n_k)$ for $k\in I$, such that $\matu_k^\top\tilde{\matu}_{k,1}^{}=0$ and
    \begin{equation}
        \label{eq: inequality}
        \|\tensA\times_{k\in I}\proj_{[\matu_k\ \tilde{\matu}_{k,1}]}\times_{k\in[d]\setminus I}\proj_{\matu_k}\|_\frob\geq\|\tensA\times_{k\in[d]\setminus I}\proj_{\matu_k}\|_\frob\prod_{k\in I}\sqrt{\frac{r_k-\underline{r}_k}{\min\{n_k,n_{-k}\}}}.
    \end{equation}
\end{lemma}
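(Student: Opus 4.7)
The plan is to build $\tilde{\matu}_{k,1}$ greedily one mode at a time for $k \in I$, using SVD at each step to guarantee that a fixed fraction of the energy along that mode is captured. Commutativity of mode multiplications along distinct modes will then let me telescope the one-mode bounds into the stated product bound.

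First I would reduce to a single-mode matrix lemma: for any $\matM \in \mathbb{R}^{n \times p}$ and $\matu \in \St(\underline{r}, n)$ with $s \leq \min\{n, p\}$, there exists $\tilde{\matu}_1 \in \St(s, n)$ with $\matu^\top \tilde{\matu}_1 = 0$ such that
\[
\|\proj_{[\matu\ \tilde{\matu}_1]}\matM\|_\frob^2 \geq \frac{s}{\min\{n, p\}}\|\matM\|_\frob^2.
\]
To prove this, I would take $\tilde{\matu}_1$ to be the top $s$ left singular vectors of $\proj_{\matu}^\perp\matM$, padded with any orthonormal completion in $\Span(\matu)^\perp$ if necessary; they automatically satisfy $\matu^\top \tilde{\matu}_1 = 0$. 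Using the orthogonal decomposition $\proj_{[\matu\ \tilde{\matu}_1]}\matM = \matu\matu^\top\matM + \tilde{\matu}_1\tilde{\matu}_1^\top\proj_{\matu}^\perp\matM$ with a cross-term vanishing by $\matu^\top\tilde{\matu}_1 = 0$, Pythagoras gives $\|\proj_{[\matu\ \tilde{\matu}_1]}\matM\|_\frob^2 = \|\matu\matu^\top\matM\|_\frob^2 + \|\tilde{\matu}_1^\top\proj_{\matu}^\perp\matM\|_\frob^2$, where the second summand equals the sum of the top $s$ squared singular values of $\proj_{\matu}^\perp\matM$. The elementary rearrangement inequality — for $a_1 \geq \cdots \geq a_N \geq 0$, $\sum_{i \leq s} a_i \geq (s/N)\sum_{i \leq N} a_i$ — applied with $N = \min\{n, p\}$ (an upper bound on the number of nonzero singular values of $\proj_{\matu}^\perp\matM$) yields $\|\tilde{\matu}_1^\top\proj_{\matu}^\perp\matM\|_\frob^2 \geq \tfrac{s}{\min\{n,p\}}\|\proj_{\matu}^\perp\matM\|_\frob^2$. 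Combined with the trivial $\|\matu\matu^\top\matM\|_\frob^2 \geq \tfrac{s}{\min\{n,p\}}\|\matu\matu^\top\matM\|_\frob^2$ and Pythagoras $\|\matM\|_\frob^2 = \|\matu\matu^\top\matM\|_\frob^2 + \|\proj_{\matu}^\perp\matM\|_\frob^2$, the single-mode bound follows.

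Second, I would iterate. Enumerate $I = \{k_1, \dots, k_m\}$, set $\tensB^{(0)} := \tensA \times_{k \in [d] \setminus I}\proj_{\matu_k}$, and recursively $\tensB^{(i)} := \tensB^{(i-1)} \times_{k_i} \proj_{[\matu_{k_i}\ \tilde{\matu}_{k_i,1}]}$, where $\tilde{\matu}_{k_i,1}$ is produced by the single-mode lemma applied to the mode-$k_i$ unfolding $\matB^{(i-1)}_{(k_i)} \in \mathbb{R}^{n_{k_i} \times n_{-k_i}}$ with $s = r_{k_i} - \underline{r}_{k_i}$ (note $s \leq r_{k_i} \leq \min\{n_{k_i}, n_{-k_i}\}$ because the Tucker rank is valid). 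Since Frobenius norm is unfolding-invariant and $(\tensB \times_k \matP)_{(k)} = \matP\matB_{(k)}$, the single-mode bound transfers directly to $\|\tensB^{(i)}\|_\frob^2 \geq \tfrac{r_{k_i} - \underline{r}_{k_i}}{\min\{n_{k_i}, n_{-k_i}\}}\|\tensB^{(i-1)}\|_\frob^2$. Telescoping over $i = 1, \dots, m$ and noting that mode multiplications along distinct modes commute, so $\tensB^{(m)}$ coincides with the left-hand side of~\eqref{eq: inequality}, yields the claimed inequality after taking square roots.

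The main technical step is the single-mode lemma, whose heart is the rearrangement bound on top singular values; the iteration itself is routine once commutativity of mode multiplications across distinct modes is invoked. I anticipate no further obstacles: the construction is explicit via SVD at each mode, and the multiplicative factor in the conclusion tracks exactly the per-mode loss.
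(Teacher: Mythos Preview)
Your proposal is correct and follows essentially the same approach as the paper: build the $\tilde{\matu}_{k,1}$ greedily one mode at a time via SVD, obtain a per-mode energy bound of the form $\frac{r_k-\underline{r}_k}{\min\{n_k,n_{-k}\}}$, and telescope using commutativity of mode products. The only cosmetic difference is that you take $\tilde{\matu}_{k,1}$ from the SVD of $\proj_{\matu_k}^\perp\matM$ and then split via Pythagoras, whereas the paper takes $[\matu_k\ \tilde{\matu}_{k,1}]$ so that its span contains the leading $r_k-\underline{r}_k$ left singular vectors of $\matM$ itself; both routes yield the same per-mode inequality and neither is more general.
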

\begin{proof}
    We prove the result by recursively considering the left singular vectors corresponding to the leading $(r_k-\underline{r}_k)$ singular values of mode-$k$ unfolding matrices for $k\in I$. For the sake of brevity, we assume that $I=\{1,2,\dots,k_0\}$ for some $k_0\in[d]$. Specifically, we observe that there exists $\tilde{\matu}_{1,1}\in\St(r_1-\underline{r}_1,n_1)$, such that $\matu_1^\top\tilde{\matu}_{1,1}=0$ and $\Span([\matu_1\ \tilde{\matu}_{1,1}])$ contains the left singular vectors corresponding to the leading $(r_1-\underline{r}_1)$ singular values of $(\tensA\times_{k=k_0+1}^d\proj_{\matu_k})_{(1)}$. Therefore, we obtain that
    \[\|\tensA\times_{1}\proj_{[\matu_1\ \tilde{\matu}_{1,1}]}\times_{k=k_0+1}^d\proj_{\matu_k}\|_\frob\geq\|\tensA\times_{k=k_0+1}^d\proj_{\matu_k}\|_\frob\sqrt{\frac{r_1-\underline{r}_1}{\min\{n_1,n_{-1}\}}}.\]
    Subsequently, we can construct $\tilde{\matu}_{k,1}\in\St(r_k-\underline{r}_k,n_k)$ for $k=2,3,\dots,k_0$ recursively in a similar fashion and obtain that 
    \begin{align}
        &~~~~\|\tensA\times_{k=1}^{k_0}\proj_{[\matu_k\ \tilde{\matu}_{k,1}]}\times_{k=k_0+1}^d\proj_{\matu_k}\|_\frob^2\nonumber\\
        &\geq\|\tensA\times_{k=1}^{k_0-1}\proj_{[\matu_k\ \tilde{\matu}_{k,1}]}\times_{k=k_0+1}^d\proj_{\matu_k}\|_\frob^2\frac{r_{k_0}-\underline{r}_{k_0}}{\min\{n_{k_0},n_{-k_0}\}}\nonumber\\
        &\geq\|\tensA\times_{k=1}^{k_0-2}\proj_{[\matu_k\ \tilde{\matu}_{k,1}]}\times_{k=k_0+1}^d\proj_{\matu_k}\|_\frob^2\prod_{k=k_0-1}^{k_0}\frac{r_k-\underline{r}_k}{\min\{n_k,n_k\}}\nonumber\\
        &\geq\cdots\nonumber\\
        &\geq\|\tensA\times_{k=k_0+1}^d\proj_{\matu_k}\|_\frob^2\prod_{k=1}^{k_0}\frac{r_k-\underline{r}_k}{\min\{n_k,n_k\}}.\nonumber
    \end{align}
\end{proof}

It is worth noting that~\cref{eq: inequality} does not depend on a specific permutation of $I$. By using~\cite[Proposition 4]{gao2025low} and~\cref{lem: inequality}, we prove that the projections~\cref{eq: an approximate projection,eq: retraction-free projection} are orthogonal projections and satisfy the angle conditions~\cref{eq: angle condition GRAP,eq: angle condition rfGRAP}. Note that we choose the same $\tilde{\matu}_{k,1}$ for~\cref{eq: an approximate projection,eq: retraction-free projection}.

\begin{proposition}\label{prop: angle condition GRAP}
    Given $\tensX=\tensG\times_{k=1}^d\matu_k\in\tensM_{\underline{\vecr}}$ with $\underline{\vecr}\neq\vecr$. Recall $I=\{k\in[d]:\underline{r}_k<r_k\}$. The approximate projection~\cref{eq: an approximate projection} satisfies $\langle\tensA,\approj_{\tangent_\tensX\!\tensM_{\leq\vecr}}(\tensA)\rangle=\|\approj_{\tangent_\tensX\!\tensM_{\leq\vecr}}(\tensA)\|_\frob^2$ for all $\tensA\in\mathbb{R}^{n_1\times n_2\times \cdots\times n_d}$, and the angle condition~\cref{eq: angle condition GRAP} with
    \[
    \tilde{\omega}(\tensX)=\sqrt{\frac{1}{|I|+1}\prod_{k\in I}\frac{r_k-\underline{r}_k}{\min\{n_k,n_{-k}\}}}.\]
\end{proposition}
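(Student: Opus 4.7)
The plan is to establish the two claims in sequence. For the first identity, I will show that $\approj_{\tangent_\tensX\!\tensM_{\leq\vecr}}$ is the orthogonal projection of $\mathbb{R}^{n_1\times\cdots\times n_d}$ onto the linear subspace $\tilde{\tensT}\subseteq\tangent_\tensX\!\tensM_{\leq\vecr}$ obtained from~\cref{eq: Tucker tangent cone} by fixing the free slots $\matu_{k,1}=\tilde{\matu}_{k,1}$. Inspection of~\cref{eq: an approximate projection} shows that the first term equals $\tensC\times_{k=1}^d\tilde{\matS}_k$ with $\tensC=\tensA\times_{k=1}^d\tilde{\matS}_k^\top$, and each $k$-th summand in the second term has $\dot{\matu}_k=\proj^\perp_{\tilde{\matS}_k}(\tensA\times_{j\neq k}\matu_j^\top)_{(k)}\matG_{(k)}^\dagger$, orthogonal to $\tilde{\matS}_k$ by construction, so $\approj_{\tangent_\tensX\!\tensM_{\leq\vecr}}(\tensA)\in\tilde{\tensT}$. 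Orthogonality $\tensA-\approj_{\tangent_\tensX\!\tensM_{\leq\vecr}}(\tensA)\perp\tilde{\tensT}$ then follows by a direct calculation using $\proj_{\tilde{\matS}_k}\tilde{\matS}_k=\tilde{\matS}_k$, $\dot{\matu}_k^\top\tilde{\matS}_k=0$, and $\matG_{(k)}^\dagger\matG_{(k)}\matG_{(k)}^\top=\matG_{(k)}^\top$; this is the orthogonal-decomposition content of \cite[Proposition~4]{gao2025low} adapted to the present choice of $\tilde{\matu}_{k,1}$. The Pythagorean theorem then yields $\langle\tensA,\approj_{\tangent_\tensX\!\tensM_{\leq\vecr}}(\tensA)\rangle=\|\approj_{\tangent_\tensX\!\tensM_{\leq\vecr}}(\tensA)\|_\frob^2$.

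For the angle condition, I first record the analogous cone identity $\langle\tensA,\proj_{\tangent_\tensX\!\tensM_{\leq\vecr}}(\tensA)\rangle=\|\proj_{\tangent_\tensX\!\tensM_{\leq\vecr}}(\tensA)\|_\frob^2$, which comes from first-order optimality of $t\mapsto\|\tensA-t\proj_{\tangent_\tensX\!\tensM_{\leq\vecr}}(\tensA)\|_\frob^2$ at $t=1$ along the ray inside the tangent cone. Substituting both identities into~\cref{eq: angle condition GRAP} reduces the target to $\|\approj_{\tangent_\tensX\!\tensM_{\leq\vecr}}(\tensA)\|_\frob^2\geq\tilde{\omega}(\tensX)^2\|\proj_{\tangent_\tensX\!\tensM_{\leq\vecr}}(\tensA)\|_\frob^2$. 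The key device is the intermediate tensor $\tensQ:=\tensA\times_{k\notin I}\proj_{\matu_k}$, paired with the orthogonal Pythagorean decomposition of both $\approj(\tensA)$ and $\proj(\tensA)$ into a ``mode-projection'' piece and $d$ ``sum'' pieces $T_{2,k}$ (again by the orthogonality content of \cite[Proposition~4]{gao2025low}). Since $\tilde{\matS}_k=\matS_k^*=\matu_k$ for $k\notin I$, the sum pieces indexed by $k\notin I$ are identical in both projections, letting the same tail quantity appear on both sides.

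To bound $\|\proj_{\tangent_\tensX\!\tensM_{\leq\vecr}}(\tensA)\|_\frob^2$ from above, I control each of the $|I|+1$ differing pieces by $\|\tensQ\|_\frob^2$: the mode-projection piece satisfies $\|\tensA\times_{k=1}^d\proj_{\matS_k^*}\|_\frob^2=\|\tensQ\times_{k\in I}\proj_{\matS_k^*}\|_\frob^2\leq\|\tensQ\|_\frob^2$, while each sum piece for $k\in I$ is bounded by $\|\tensA\times_{j\neq k}\matu_j^\top\|_\frob^2=\|\tensA\times_{j\neq k}\proj_{\matu_j}\|_\frob^2\leq\|\tensQ\|_\frob^2$, both by Frobenius-norm monotonicity under introducing extra mode-wise orthogonal projections. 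Writing $S:=\sum_{k\notin I}\|T_{2,k}\|_\frob^2$ for the common tail, this yields $\|\proj_{\tangent_\tensX\!\tensM_{\leq\vecr}}(\tensA)\|_\frob^2\leq(|I|+1)\|\tensQ\|_\frob^2+S$. Symmetrically, discarding the non-negative sum pieces for $k\in I$ and applying~\cref{lem: inequality} to $\|\tensA\times_{k=1}^d\proj_{\tilde{\matS}_k}\|_\frob^2=\|\tensQ\times_{k\in I}\proj_{\tilde{\matS}_k}\|_\frob^2$ gives $\|\approj_{\tangent_\tensX\!\tensM_{\leq\vecr}}(\tensA)\|_\frob^2\geq c\|\tensQ\|_\frob^2+S$ with $c:=\prod_{k\in I}(r_k-\underline{r}_k)/\min\{n_k,n_{-k}\}$. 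With $\tilde{\omega}(\tensX)^2=c/(|I|+1)$, the desired inequality reduces to $(1-c/(|I|+1))S\geq 0$, which holds since $c\leq 1$. I expect the main delicate step to be the orthogonal-decomposition underpinning both identities -- that the two summands of both $\approj$ and $\proj$ are mutually orthogonal and realize honest orthogonal projections -- after which the remainder is routine Frobenius-norm monotonicity plus a single application of~\cref{lem: inequality}.
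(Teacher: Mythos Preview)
Your proposal is correct and follows essentially the same route as the paper: both arguments rely on the orthogonal decomposition of $\approj$ and $\proj$ into a mode-projection piece plus $d$ sum pieces (\cite[Proposition~4]{gao2025low}), drop the $k\in I$ sum pieces on the $\approj$ side, invoke \cref{lem: inequality} to extract the factor $c_I$, and bound each of the $|I|+1$ differing pieces of $\proj$ by $\|\tensQ\|_\frob^2=\|\tensA\times_{k\notin I}\proj_{\matu_k}\|_\frob^2$. Your write-up is slightly more explicit about the common tail $S$ and the final reduction $(1-c_I/(|I|+1))S\geq 0$, and note that the extra cone identity $\langle\tensA,\proj(\tensA)\rangle=\|\proj(\tensA)\|_\frob^2$ is not actually needed---dividing $\|\approj(\tensA)\|_\frob^2\geq\tilde{\omega}\|\proj(\tensA)\|_\frob\|\approj(\tensA)\|_\frob$ by $\|\approj(\tensA)\|_\frob$ already gives the target inequality.
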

\begin{proof}
    Denote $c_I=\prod_{k\in I}(r_k-\underline{r}_k)/\min\{n_k,n_{-k}\}$. It follows from~\cref{eq: an approximate projection,eq: inequality} that
    \begin{align*}
        \|\approj_{\tangent_\tensX\!\tensM_{\leq\vecr}}(\tensA)\|_\frob^2     &=\|\tensA\times_{k=1}^d\proj_{\tilde{\matS}_{k}}\|_\frob^2+\sum_{k=1}^d\|\tensG\times_k(\proj^\perp_{\tilde{\matS}_{k}}\!(\tensA\times_{j\neq k}^{}\!\matu_j^\T)_{(k)}\matG_{(k)}^\dagger)\times_{j\neq k}\!\matu_j\|_\frob^2\\
        &\geq \|\tensA\times_{k=1}^d\proj_{\tilde{\matS}_{k}}\|_\frob^2+\sum_{k\in[d]\setminus I}\|\tensG\times_k(\proj^\perp_{\matu_{k}}\!(\tensA\times_{j\neq k}^{}\matu_j^\T)_{(k)}\matG_{(k)}^\dagger)\|_\frob^2\\
        &\geq c_I\|\tensA\times_{k\in[d]\setminus I}\proj_{\matu_k}\|_\frob^2+\sum_{k\in[d]\setminus I}\|\tensG\times_k(\proj^\perp_{\matu_{k}}\!(\tensA\times_{j\neq k}^{}\matu_j^\T)_{(k)}\matG_{(k)}^\dagger)\|_\frob^2\\
        &\geq \frac{c_I}{|I|+1}\|\proj_{\tangent_\tensX\!\tensM_{\leq\vecr}}\!\tensA\|_\frob^2,
    \end{align*}
    where the first inequality follows from omitting the terms with $k\in I$ and the orthogonality of $\matu_k$, and the last inequality follows from 
        \begin{align*}
            \|\tensA\times_{k\in[d]\setminus I}\proj_{\matu_k}\|_\frob&\geq\|\tensA\times_{k\in I}\proj_{\matS_k^*}\times_{k\in[d]\setminus I}\proj_{\matu_k}\|_\frob,\\
            \|\tensA\times_{j\in[d]\setminus I}\proj_{\matu_j}\|_\frob&\geq\|\tensA\times_{j\neq k}^{}\proj_{\matu_j}\|_\frob=\|(\tensA\times_{j\neq k}^{}\matu_j^\T)_{(k)}\|_\frob\\
            &\geq\|\tensG\times_k(\proj^\perp_{\matS_{k}^*}\!(\tensA\times_{j\neq k}^{}\matu_j^\T)_{(k)}\matG_{(k)}^\dagger)\times_{j\neq k}\matu_j\|_\frob
        \end{align*} 
    for all $k\in I$, and $\|\matG_{(k)}^\dagger\matG_{(k)}^{}\|\leq 1$.
\end{proof}

Additionally, we prove that the partial projection~\cref{eq: retraction-free projection} satisfies the angle condition~\cref{eq: angle condition rfGRAP} in the following proposition.
\begin{proposition}\label{prop: approximate rfGRAP}
    Given $\tensX=\tensG\times_{k=1}^d\matu_k\in\tensM_{\underline{\vecr}}$ with $\underline{\vecr}\neq\vecr$. Recall $I=\{k\in[d]:\underline{r}_k<r_k\}$. The partial projection~\cref{eq: retraction-free projection} satisfies $\langle\tensA,\hat{\proj}_{\tangent_\tensX\!\tensM_{\leq\vecr}}(\tensA)\rangle=\|\hat{\proj}_{\tangent_\tensX\!\tensM_{\leq\vecr}}(\tensA)\|_\frob^2$ for all $\tensA\in\mathbb{R}^{n_1\times n_2\times \cdots\times n_d}$, and the angle condition~\cref{eq: angle condition rfGRAP} with
    \[ 
    \hat{\omega}(\tensX)=\sqrt{\frac{1}{d+1}\prod_{k\in I}\frac{r_k-\underline{r}_k}{\min\{n_k,n_{-k}\}}}.
    \]
\end{proposition}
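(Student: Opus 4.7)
The plan is to mirror the proof of~\cref{prop: angle condition GRAP} but replace the structural formula for $\approj_{\tangent_\tensX\!\tensM_{\leq\vecr}}$ by the simple observation that $\hat{\proj}_{\tangent_\tensX\!\tensM_{\leq\vecr}}(\tensA)$ dominates each of its $d+1$ candidates $\tilde{\proj}_0(\tensA),\tilde{\proj}_1(\tensA),\dots,\tilde{\proj}_d(\tensA)$ in Frobenius norm. By \cite[Proposition 4]{gao2025low}, every $\tilde{\proj}_k(\tensA)$ is the orthogonal projection of $\tensA$ onto a suitable linear subspace of $\mathbb{R}^{n_1\times\cdots\times n_d}$, so $\langle\tensA,\tilde{\proj}_k(\tensA)\rangle=\|\tilde{\proj}_k(\tensA)\|_\frob^2$ holds for every $k\in\{0,1,\dots,d\}$. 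Since $\hat{\proj}_{\tangent_\tensX\!\tensM_{\leq\vecr}}(\tensA)$ is by definition the member of this list with largest Frobenius norm, the same identity passes to it, which proves the first claim.

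For the angle condition, I would expand $\|\proj_{\tangent_\tensX\!\tensM_{\leq\vecr}}(\tensA)\|_\frob^2$ via the orthogonal decomposition induced by~\cref{eq: exact projection}, splitting it into a core term $\|\tensA\times_{k=1}^d \proj_{\matS_k^*}\|_\frob^2$ plus $d$ summands indexed by $k\in[d]$. Setting $c_I:=\prod_{k\in I}(r_k-\underline{r}_k)/\min\{n_k,n_{-k}\}$ and noting that $\matS_k^*=\matu_k$ for $k\in[d]\setminus I$, the core term is bounded by $\|\tensA\times_{k\in[d]\setminus I}\proj_{\matu_k}\|_\frob^2$, which~\cref{lem: inequality} in turn bounds by $c_I^{-1}\|\tilde{\proj}_0(\tensA)\|_\frob^2$. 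For each $k\in I$, the $k$-th summand has the form $\tensG\times_k(\proj_{\matS_k^*}^\perp\matM_k\matG_{(k)}^\dagger)\times_{j\neq k}\matu_j$ with $\matM_k=(\tensA\times_{j\neq k}\matu_j^\top)_{(k)}$; using $\|\matG_{(k)}^\dagger\matG_{(k)}\|_{\op}\leq 1$, the projection property of $\proj_{\matS_k^*}^\perp$, and the orthonormality of the factor matrices, its Frobenius norm is at most $\|\tensA\times_{j\neq k}\proj_{\matu_j}\|_\frob\leq\|\tensA\times_{j\in[d]\setminus I}\proj_{\matu_j}\|_\frob\leq c_I^{-1/2}\|\tilde{\proj}_0(\tensA)\|_\frob$, where the middle inequality uses $[d]\setminus I\subseteq\{j:j\neq k\}$ (valid because $k\in I$). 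For $k\in[d]\setminus I$, the $k$-th summand coincides with $\tilde{\proj}_k(\tensA)$ since $\matS_k^*=\matu_k$.

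Assembling the three contributions and invoking $\|\hat{\proj}_{\tangent_\tensX\!\tensM_{\leq\vecr}}(\tensA)\|_\frob\geq\|\tilde{\proj}_k(\tensA)\|_\frob$ for every $k=0,1,\dots,d$, I obtain
\begin{equation*}
\|\proj_{\tangent_\tensX\!\tensM_{\leq\vecr}}(\tensA)\|_\frob^2 \leq (1+|I|)\,c_I^{-1}\,\|\hat{\proj}_{\tangent_\tensX\!\tensM_{\leq\vecr}}(\tensA)\|_\frob^2 + (d-|I|)\,\|\hat{\proj}_{\tangent_\tensX\!\tensM_{\leq\vecr}}(\tensA)\|_\frob^2.
\end{equation*}
The final step uses the crucial algebraic observation $c_I\leq 1$ (which follows from $r_k-\underline{r}_k\leq r_k\leq\min\{n_k,n_{-k}\}$) to upgrade $1$ to $c_I^{-1}$ in the second summand, collapsing the right-hand side into $(d+1)c_I^{-1}\|\hat{\proj}_{\tangent_\tensX\!\tensM_{\leq\vecr}}(\tensA)\|_\frob^2$. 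Rearranging and combining with the inner-product identity from the first paragraph yields the angle condition~\cref{eq: angle condition rfGRAP} with the advertised $\hat{\omega}(\tensX)=\sqrt{c_I/(d+1)}$.

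The main obstacle I anticipate is the careful bookkeeping among the three kinds of contributions to $\|\proj_{\tangent_\tensX\!\tensM_{\leq\vecr}}(\tensA)\|_\frob^2$: the core term and the $|I|$ summands with $k\in I$ must be routed back to $\|\tilde{\proj}_0(\tensA)\|_\frob^2$ via~\cref{lem: inequality}, while the $(d-|I|)$ summands with $k\in[d]\setminus I$ must be recognized as $\tilde{\proj}_k(\tensA)$ itself. The mild but essential inequality $c_I\leq 1$ is what allows the uniform denominator $d+1$ to replace the finer but uneven split into $|I|+1$ and $d-|I|$ contributions; without it the bound would look cosmetically different.
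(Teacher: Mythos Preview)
Your proof is correct and uses the same key ingredients as the paper (the orthogonal-projection identity, \cref{lem: inequality}, and $c_I\le 1$), arriving at the same inequality $\|\hat{\proj}_{\tangent_\tensX\!\tensM_{\leq\vecr}}(\tensA)\|_\frob^2\ge \frac{c_I}{d+1}\|\proj_{\tangent_\tensX\!\tensM_{\leq\vecr}}(\tensA)\|_\frob^2$. The only substantive difference is the direction and routing: the paper first applies the max--average inequality $\|\hat{\proj}\|_\frob^2=\max_k\|\tilde{\proj}_k\|_\frob^2\ge\frac{1}{d+1}\sum_{k=0}^d\|\tilde{\proj}_k\|_\frob^2$ and then lower-bounds each $\|\tilde{\proj}_k(\tensA)\|_\frob$ uniformly by the corresponding term of the exact projection (using $\Span(\matS_k^*)^\perp\subseteq\Span(\matu_k)^\perp$ for all $k\in[d]$), whereas you bound $\|\proj_{\tangent_\tensX\!\tensM_{\leq\vecr}}(\tensA)\|_\frob^2$ from above term-by-term and route the summands with $k\in I$ back through $\tilde{\proj}_0$ rather than through $\tilde{\proj}_k$. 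The paper's version is a bit more symmetric (no case split on $I$ versus $[d]\setminus I$ for the $d$ tail terms), while your version avoids the averaging step; both yield the identical constant.
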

\begin{proof}
    Recall $c_I=\prod_{k\in I}(r_k-\underline{r}_k)/\min\{n_k,n_{-k}\}$. By using~\cref{eq: inequality}, we obtain that
    \begin{align*}
        &~~~~\|\hat{\proj}_{\tangent_{\tensX}\!\tensM_{\leq\vecr}}(\tensA)\|_\frob^2=\max\{\|\approj_0(\tensA)\|_\frob^2,\max_{k=1,2,\dots,d}\|\proj_k(\tensA)\|_\frob^2\}\\
        &\geq\frac{1}{d+1}\Big(\|\approj_0(\tensA)\|_\frob^2+\sum_{k=1}^d\|\proj_k(\tensA)\|_\frob^2\Big)\\
        &=\frac{1}{d+1}\big(\| \tensA\times_{k=1}^d\proj_{\tilde{\matS}_{k}}\|_\frob^2+\sum_{k=1}^d\|\tensG\times_k(\proj_{\matu_{k}}^\perp\!( \tensA\times_{j\neq k}\matu_j^\T)_{(k)}\matG_{(k)}^\dagger)\times_{j\neq k}\matu_j\|_\frob^2\big)\\
        &\geq\frac{1}{d+1}\big(c_I\| \tensA\times_{k\in[d]\setminus I}\proj_{\matu_k}\|_\frob^2+\sum_{k=1}^d\|\tensG\times_k(\proj_{{\matS}^*_{k}}^\perp(\tensA\times_{j\neq k}\matu_j^\T)_{(k)}\matG_{(k)}^\dagger)\|_\frob^2\big)\\
            &\geq\frac{1}{d+1}\big(c_I\| \tensA\times_{k=1}^d\proj_{{\matS}^*_{k}}\|_\frob^2+\sum_{k=1}^d\|\tensG\times_k(\proj_{{\matS}^*_{k}}^\perp(\tensA\times_{j\neq k}\matu_j^\T)_{(k)}\matG_{(k)}^\dagger)\times_{j\neq k}\matu_j\|_\frob^2\big)\\
        &=\frac{1}{d+1}\prod_{k\in I}\frac{r_k-\underline{r}_k}{\min\{n_k,n_{-k}\}}\|\proj_{\tangent_\tensX\!\tensM_{\leq\vecr}}(\tensA)\|_\frob^2,
    \end{align*}
    where we use the facts $\matS^*_k=[\matu_k\ \matu_{k,1}^*]$ if $\underline{r}_k<r_k$, $\matS^*_k=\matu_k$ if $\underline{r}_k=r_k$, and $\Span(\matS^*_k)^\perp\subseteq\Span(\matu_k)^\perp$.
\end{proof}


\section{Gradient-related\,approximate projection method with rank decrease (GRAP-R)}\label{sec: GRAP-R}
We propose a first-order method for~\cref{eq: problem (P)} by combining the approximate projection~\cref{eq: an approximate projection} with a rank-decreasing mechanism. We prove that the GRAP-R method converges to stationary points via a sufficient decrease condition.

\subsection{Algorithm}
The GRAP-R method is depicted in~\cref{alg: GRAP-R}, which consists of a rank-decreasing mechanism and line search. Specifically, given $\tensX^{(t)}\in\tensM_{\leq\vecr}$, we monitor the singular values of unfolding matrices $\matx_{(1)}^{(t)},\matx_{(2)}^{(t)},\dots,\matx_{(d)}^{(t)}$, and construct the index sets 
\begin{equation}
    \label{eq: GRAP-R index}
    I_k=\{\rank_\Delta(\matx_{(k)}^{(t)}),\rank_\Delta(\matx_{(k)}^{(t)})+1,\dots,\rank(\matx_{(k)}^{(t)})-1,\rank(\matx_{(k)}^{(t)})\}
\end{equation}
for $k\in[d]$. These index sets are designed to identify the singular values smaller than a threshold $\Delta>0$ and prevent the removal of small singular values that still contribute to the underlying structure of a tensor. We project $\tensX^{(t)}$ to $\tensM_{\underline{\vecr}}$ by~HOSVD in~\cref{eq: HOSVD} and yield lower-rank candidates $\tensX_{\underline{\vecr}}^{(t)}:=\proj_{\underline{\vecr}}^{\mathrm{HO}}(\tensX^{(t)})$ for all $\underline{\vecr}\in I_1\times I_2\times\cdots\times I_d$. Note that in the worst case $\rank_\Delta(\matx_{(k)}^{(t)})=0$, $(\prod_{k=1}^d r_k^{(t)}+1)$ lower-rank candidates are generated, where $r_k^{(t)}=\rank(\matx_{(k)}^{(t)})$. Therefore, when the rank parameters $r_k^{(t)}$ become large, the number of candidates can grow exponentially, rendering the GRAP-R method become less efficient in the high-rank settings, as observed in our numerical experiments in~\cref{sec: experiments}.

Subsequently, we perform line searches for all lower-rank candidates $\tensX_{\underline{\vecr}}^{(t)}$ via 
\begin{equation}
    \label{eq: GRAP mapping}
    \tensY_{\underline{\vecr}}^{(t)}=\proj_{\leq\vecr}^{\mathrm{HO}}(\tensX_{\underline{\vecr}}^{(t)}+s_{\underline{\vecr}}^{(t)}\approj_{\tangent_{\tensX_{\underline{\vecr}}^{(t)}}\!\tensM_{\leq\vecr}}(-\nabla f(\tensX_{\underline{\vecr}}^{(t)})))
\end{equation}
based on the approximate projection~\cref{eq: an approximate projection}.
The stepsize $s^{(t)}_{\underline{\vecr}}$ is chosen by the Armijo backtracking line search: given an initial stepsize $\bar{s}>0$, $\tensX\in\tensM_{\leq\vecr}$ and a search direction $\tensV\in\tangent_\tensX\!\tensM_{\leq\vecr}$, find the smallest integer~$l$, such that for $s=\rho^l 
\bar{s}$, the inequality 
\begin{equation}
    \label{eq: Armijo stepsize}
    f(\tensX)-f(\proj_{\leq\vecr}^{\mathrm{HO}}(\tensX+s\tensV))\geq s a \langle-\nabla f(\tensX),\tensV\rangle
\end{equation}
holds, where $\rho,\ a\in(0,1)$ are backtracking parameters. Finally, the point $\tensX^{(t+1)}$ is updated by choosing the point in $\{\tensY^{(t)}_{\underline{\vecr}}\}_{\underline{\vecr}}$ that has the lowest function value. The algorithm terminates if $\|\approj_{\tangent_{\tensX^{(t)}}\!\tensM_{\leq\vecr}}(-\nabla f(\tensX^{(t)}))\|_\frob$ vanishes, which implies that $\tensX^{(t)}$ is a stationary point of~\cref{eq: problem (P)}, since the angle condition~\cref{eq: angle condition GRAP} provides an upper bound to the metric projection $\proj_{\tangent_{\tensX^{(t)}}\!\tensM_{\leq\vecr}}(-\nabla f(\tensX^{(t)}))$.

\begin{algorithm}[htbp]
    \caption{Gradient-related approximate projection method with rank decrease (GRAP-R)}
    \label{alg: GRAP-R}
    \begin{algorithmic}[1]
        \REQUIRE Initial guess $\tensX^{(0)}\in\tensM_{\leq\vecr}$, $\Delta>0$, backtracking parameters $\rho, a\in(0,1)$, $t=0$.\\      
        \WHILE{The stopping criteria are not satisfied}
            \STATE Compute the index sets $I_k$ for $k\in[d]$ via~\cref{eq: GRAP-R index}.
            \FOR{$\underline{\vecr}\in I_1\times I_2\times\cdots\times I_d$}
                \STATE Compute a low-rank candidate $\proj_{\underline{\vecr}}^{\mathrm{HO}}(\tensX^{(t)})$ by~\cref{eq: HOSVD}. 
                \STATE Compute $\tensY_{\underline{\vecr}}^{(t)}$ by~\cref{eq: GRAP mapping} with Armijo backtracking line search~\cref{eq: Armijo stepsize}.
            \ENDFOR
            \STATE Update $\tensX^{(t+1)}=\argmin_{\underline{\vecr}\in I_1\times I_2\times\cdots\times I_d}f(\tensY_{\underline{\vecr}}^{(t)})$ and $t=t+1$. \label{line: GRAP-R update}
        \ENDWHILE
        \ENSURE $\tensX^{(t)}$.
    \end{algorithmic}
\end{algorithm}

\subsection{Backtracking line search}
In order to facilitate the global convergence, we aim to provide a lower bound $\underline{s}$ of the stepsize in~\cref{eq: Armijo stepsize}. Given a closed ball $B\subsetneq\mathbb{R}^{n_1\times n_2\times\cdots\times n_d}$, we assume that $\nabla f$ is locally Lipschitz continuous with constant
\[L_{B}:=\sup_{\tensX,\tensY\in B,\ \tensX\neq\tensY}\frac{\|\nabla f(\tensX)-\nabla f(\tensY)\|_\frob}{\|\tensX-\tensY\|_\frob}<\infty.\]
Therefore, it holds that
\begin{equation}
    \label{eq: Lipschitz}
    |f(\tensY)-f(\tensX)-\langle\nabla f(\tensX),\tensY-\tensX\rangle|\leq\frac{L_{B}}{2}\|\tensY-\tensX\|_\frob^2.
\end{equation}
First, we prove the following lemma for the metric projection.
\begin{lemma}
    Given $\tensX\in\tensM_{\leq\vecr}\setminus\{0\}$ and $\tensV\in\tangent_{\tensX}\!\tensM_{\leq\vecr}\setminus\{0\}$, it holds that
    \begin{equation}
        \label{eq: curvature}
        \|\proj_{\leq\vecr}(\tensX+s\tensV)-(\tensX+s\tensV)\|_\frob\leq c(\tensX) s^2\|\tensV\|_\frob^2\quad\text{with}\quad c(\tensX)=(\frac{1}{\sigma_{\min}(\tensX)^2}+1)^{\frac{d}{2}}
    \end{equation}
    for $s\in[0,1/\|\tensV\|_\frob]$, where $\sigma_{\min}(\tensX):=\min_{k\in [d]}\sigma_{\min}(\matx_{(k)})$. 
\end{lemma}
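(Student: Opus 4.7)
My plan is to upper bound $\dist(\tensX+s\tensV,\tensM_{\leq\vecr})$ by exhibiting a subspace $W\subseteq\tensM_{\leq\vecr}$ close to $\tensX+s\tensV$, then invoking the optimality of $\proj_{\leq\vecr}$ to give $\|\proj_{\leq\vecr}(\tensA)-\tensA\|_\frob\leq\dist(\tensA,W)$ for $\tensA=\tensX+s\tensV$.

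First, I would invoke the explicit tangent-cone parametrization~\eqref{eq: Tucker tangent cone}, writing $\tensV=\tensC\times_{k=1}^d[\matu_k\ \matu_{k,1}]+\sum_{k=1}^d\tensG\times_k\dot{\matu}_k\times_{j\neq k}\matu_j$ together with the orthogonality constraints $\matu_k^\top[\matu_{k,1}\ \dot{\matu}_k]=0$ and $\matu_{k,1}^\top\dot{\matu}_k=0$. The orthogonality guarantees that the $d+1$ summands are mutually Frobenius-orthogonal, so $\|\tensC\|_\frob^2+\sum_k\|\dot{\matu}_k\matG_{(k)}\|_\frob^2\leq\|\tensV\|_\frob^2$; in particular $\|\tensC\|_\frob\leq\|\tensV\|_\frob$ and $\|\dot{\matu}_k\matG_{(k)}\|_\frob\leq\|\tensV\|_\frob$. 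Combining the latter with $\|\matG_{(k)}^\dagger\|_\op=1/\sigma_{\min}(\matx_{(k)})$ yields the a priori bound $\|\dot{\matu}_k\|_\frob\leq\|\tensV\|_\frob/\sigma_{\min}(\tensX)$.

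Next, I would introduce the augmented Stiefel retraction $Y_k:=[(\matu_k+s\dot{\matu}_k)(\matI+s^2\dot{\matu}_k^\top\dot{\matu}_k)^{-1/2}\ \matu_{k,1}]\in\St(r_k,n_k)$ and set $W:=\otimes_{k=1}^d\Span(Y_k)\subseteq\tensM_{\leq\vecr}$. A direct computation using $\matu_k^\top\matu_k=\matI$ and $\dot{\matu}_k^\top\matu_k=0$ gives the crucial algebraic identity
\begin{equation*}
Y_kY_k^\top(\matu_k+s\dot{\matu}_k)=\matu_k+s\dot{\matu}_k,\qquad k\in[d],
\end{equation*}
which cancels the leading $O(s)$ contribution in $(\matI-Y_kY_k^\top)(\matx_{(k)}+s\matv_{(k)})$; what remains factors through $s(\matI-Y_kY_k^\top)\matu_k$ multiplied by the tangent-parameter blocks $\tensC$ and $\sum_{l\neq k}\tensG\times_l\dot{\matu}_l\times_{j\neq l}\matu_j$, each of which has Frobenius norm controlled by $\|\tensV\|_\frob$ (from Step~1). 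Thus $\|(\matI-Y_kY_k^\top)(\matx_{(k)}+s\matv_{(k)})\|_\frob^2$ is an $O(s^4)$ quantity bounded by $s^4\|\tensV\|_\frob^4(1+s^2\|\dot{\matu}_k\|_\frob^2)$ times a dimensional constant.

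Finally, I would estimate $\|(\tensX+s\tensV)-(\tensX+s\tensV)\times_{k=1}^dY_kY_k^\top\|_\frob^2$ via the multilinear decomposition of $\otimes_k Y_k$, which inherits the normalization factor $\prod_{k=1}^d(1+s^2\|\dot{\matu}_k\|_\frob^2)^{1/2}$ from the polar-form retractions applied in every mode. Using $s\|\tensV\|_\frob\leq 1$ and $\|\dot{\matu}_k\|_\frob\leq\|\tensV\|_\frob/\sigma_{\min}(\tensX)$, each mode contributes $1+s^2\|\dot{\matu}_k\|_\frob^2\leq 1+1/\sigma_{\min}(\tensX)^2$, so the product is bounded by $(1/\sigma_{\min}(\tensX)^2+1)^{d/2}=c(\tensX)$. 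Invoking $\|\proj_{\leq\vecr}(\tensA)-\tensA\|_\frob\leq\|\tensA-\tensA\times_{k=1}^dY_kY_k^\top\|_\frob$ concludes the proof.

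The hardest part will be the multilinear assembly in the last step: each of the $2^d-1$ cross terms in the Pythagorean expansion must be tracked so that the product structure $\prod_k(1+s^2\|\dot{\matu}_k\|_\frob^2)^{1/2}$ emerges instead of a looser additive bound. The cancellation identity from Step~2 is essential, since without it each mode-$k$ residual would only be $O(s)$ rather than $O(s^2)$, and the product exponent $d/2$ in $c(\tensX)$ would not arise.
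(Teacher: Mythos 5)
Your overall construction is a close relative of the paper's own proof: both rest on the parametrization~\eqref{eq: Tucker tangent cone}, the bound $\|\dot{\matu}_k\|_\frob\leq\|\tensV\|_\frob/\sigma_{\min}(\tensX)$, and the fact that the perturbed frames $[\matu_k+s\dot{\matu}_k\ \ \matu_{k,1}]$ generate a feasible object (your subspace $W=\otimes_{k=1}^d\Span(Y_k)\subseteq\tensM_{\leq\vecr}$; the paper instead takes the explicit curve $\gamma(s)=s\tensC\times_{k=1}^d[\matu_k+s\dot{\matu}_k\ \ \matu_{k,1}]+\tensG\times_{k=1}^d(\matu_k+s\dot{\matu}_k)$, which lies in your $W$). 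However, Steps 2--3 contain a genuine quantitative gap. The correct single-mode residual, after the cancellation you identify, is
\begin{equation*}
(\tensX+s\tensV)\times_k(\matI-Y_kY_k^\top)= s\,\tensC\times_k\bigl[(\matI-Y_kY_k^\top)\matu_k\ \ 0\bigr]\times_{j\neq k}[\matu_j\ \ \matu_{j,1}]+s\sum_{l\neq k}\tensG\times_l\dot{\matu}_l\times_k\bigl((\matI-Y_kY_k^\top)\matu_k\bigr)\times_{j\neq k,\,l}\matu_j ,
\end{equation*}
and since $(\matI-Y_kY_k^\top)\matu_k=-s(\matI-Y_kY_k^\top)\dot{\matu}_k$, its norm is at most $s^2\|\dot{\matu}_k\|_2\bigl(\|\tensC\|_\frob+\sum_{l\neq k}\|\tensG\times_l\dot{\matu}_l\|_\frob\bigr)$; it necessarily carries the factor $\|\dot{\matu}_k\|\leq\|\tensV\|_\frob/\sigma_{\min}(\tensX)$. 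Your stated per-mode bound $s^4\|\tensV\|_\frob^4(1+s^2\|\dot{\matu}_k\|_\frob^2)$ times a dimensional constant drops exactly this factor and is false when $\sigma_{\min}(\tensX)$ is small and $s$ is small: already in the matrix picture, $\tensX=\sigma uv^\top$, $\tensV=\sigma(\dot u v^\top+u\dot v^\top)$ with unit $\dot u\perp u$, $\dot v\perp v$ gives a mode-$1$ residual of order $s^2\sigma=s^2\|\tensV\|_\frob^2/(2\sigma)$, not $O(s^2\|\tensV\|_\frob^2)$.

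The assembly in Step 3 --- which you acknowledge is the hardest part --- is only asserted: the claim that the constant $\prod_{k=1}^d(1+s^2\|\dot{\matu}_k\|_\frob^2)^{1/2}$ is ``inherited from the polar normalization'' is not an argument, and the straightforward route (sum the $2^d-1$ mutually orthogonal pieces using the corrected per-mode bounds) yields a constant of order $d/\sigma_{\min}(\tensX)$, which does not always sit below $c(\tensX)=(1/\sigma_{\min}(\tensX)^2+1)^{d/2}$ (for $d=3$ and $\sigma_{\min}(\tensX)=1$ one gets $3$ versus $2\sqrt{2}$). So the exact constant in~\eqref{eq: curvature} requires bookkeeping that your sketch does not supply. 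The simplest repair inside your framework is to bound $\dist(\tensX+s\tensV,W)$ by the distance to the explicit competitor $\gamma(s)\in W$ above and expand in powers of $s$, using the block orthogonality, $\|\dot{\matu}_k\|_\frob\leq\|\tensV\|_\frob/\sigma_{\min}(\tensX)$ and $s\|\tensV\|_\frob\leq 1$ --- which is precisely the paper's proof, so the projection-onto-$W$ viewpoint ends up buying nothing beyond it.
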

\begin{proof}
    Given $\tensV\in\tangent_{\tensX}\!\tensM_{\leq\vecr}$ and its representation~\cref{eq: Tucker tangent cone}, we consider the smooth curve 
    \[\gamma(s)=s\tensC\times_{k=1}^d\begin{bmatrix}
        \matu_k+s\dot{\matu}_k &\ \matu_{k,1}
    \end{bmatrix}+\tensG\times_{k=1}^d(\matu_k+s\dot{\matu}_k).\]
    It follows from~\cref{eq: Tucker tangent cone} that $\gamma(0)=\tensX$, $\gamma^\prime(0)=\tensV$, and the norm of $\tensV$ is $\|\tensV\|_\frob^2=\|\tensC\|_\frob^2+\sum_{k=1}^{d}\|\tensG\times_k\dot{\matu}_k\|_\frob^2$. Since $\matG_{(k)}$ is of full rank, we obtain that 
    \[\|\dot{\matu}_k\|_\frob\leq\|\dot{\matu}_k\matG_{(k)}\|_\frob\|\matG_{(k)}^\dagger\|_2\leq\frac{1}{\sigma_{\min}(\matG_{(k)})}\|\tensV\|_\frob\leq\frac{1}{\sigma_{\min}(\tensX)}\|\tensV\|_\frob,\]
    where $\|\matG_{(k)}^\dagger\|_2:=\sigma_1(\matG_{(k)}^\dagger)$ and we use the fact $\sigma_{\min}(\tensX)=\sigma_{\min}(\tensG)$ by orthogonality of~$\matu_k$. Therefore, by using the Taylor expansion of $\gamma(s)$, the orthogonality of $\matu_k,\matu_{k,1}$, and $[\matu_k\ \matu_{k,1}]^\top\dot{\matu}_k=0$, we obtain that
    \begin{align*}
        &~~~~\|\proj_{\leq\vecr}(\tensX+s\tensV)-(\tensX+s\tensV)\|_\frob^2\leq\|\gamma(s)-(\tensX+s\tensV)\|_\frob^2\\
        &=s^4\sum_{k=1}^{d}\Big(\|\tensC\times_k[\dot{\matu}_k\ 0]\|_\frob^2+\sum_{j\neq k}\|\tensG\times_j\dot{\matu}_j\times_k\dot{\matu}_k\|_\frob^2\Big)\\
        &~~~+s^6\sum_{j,k=1,j\neq k}^{d}\Big(\|\tensC\times_j[\dot{\matu}_j\ 0]\times_k[\dot{\matu}_k\ 0]\|_\frob^2+\sum_{l=1,l\neq j,l\neq k}^{d}\|\tensG\times_j\dot{\matu}_j\times_k\dot{\matu}_k\times_l\dot{\matu}_l\|_\frob^2\Big)\\
        &~~~+\cdots\\
        &~~~+s^{2(d+1)}\|\tensC\times_{k=1}^d[\dot{\matu}_k\ 0]\|_\frob^2\\
        &\leq s^4\sum_{k=1}^{d}\binom{d}{k}\frac{s^{2k-2}\|\tensV\|_\frob^{2k+2}}{\sigma_{\min}(\tensX)^{2k}}\\
        &\leq s^4\|\tensV\|_\frob^4(\frac{1}{\sigma_{\min}(\tensX)^2}+1)^d,
    \end{align*}
    where $\binom{d}{k}:=d!/(k!(d-k)!)$ is the combinatorial number and we use the fact $s\|\tensV\|_\frob\leq 1$ in the last inequality.
\end{proof}

We prove the following lemma for the HOSVD~\cref{eq: HOSVD}.
\begin{lemma}\label{lem: HOSVD inequality}
    For all $\tensX\in\tensM_{\leq\vecr}$ and $\tensA\in\mathbb{R}^{n_1\times n_2\times\cdots\times n_d}$, the HOSVD satisfies that $\|\proj_{\leq\vecr}^\mathrm{HO}(\tensA)-\tensX\|_\frob\leq (\sqrt{d}+1)\|\tensX-\tensA\|_\frob$. 
\end{lemma}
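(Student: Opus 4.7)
The plan is a direct triangle-inequality argument combining the HOSVD quasi-optimality bound in~\cref{eq: quasi-optimal HOSVD} with the defining property of the metric projection $\proj_{\leq\vecr}$.

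First, I would insert the tensor $\tensA$ between $\proj_{\leq\vecr}^{\ho}(\tensA)$ and $\tensX$ and apply the triangle inequality:
\begin{equation*}
\|\proj_{\leq\vecr}^{\ho}(\tensA)-\tensX\|_\frob \leq \|\proj_{\leq\vecr}^{\ho}(\tensA)-\tensA\|_\frob + \|\tensA-\tensX\|_\frob.
\end{equation*}
Next, I would invoke the quasi-optimality estimate~\cref{eq: quasi-optimal HOSVD} to bound the first term by $\sqrt{d}\,\|\tensA-\proj_{\leq\vecr}(\tensA)\|_\frob$. Since $\tensX\in\tensM_{\leq\vecr}$ is a feasible competitor for the optimization problem defining $\proj_{\leq\vecr}(\tensA)$, the optimality of the metric projection gives $\|\tensA-\proj_{\leq\vecr}(\tensA)\|_\frob\leq\|\tensA-\tensX\|_\frob$. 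Substituting this back yields
\begin{equation*}
\|\proj_{\leq\vecr}^{\ho}(\tensA)-\tensX\|_\frob \leq \sqrt{d}\,\|\tensA-\tensX\|_\frob + \|\tensA-\tensX\|_\frob = (\sqrt{d}+1)\|\tensA-\tensX\|_\frob,
\end{equation*}
which is the desired conclusion.

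There is no real obstacle here, so I anticipate the proof to be essentially one line once the two ingredients are identified; the lemma is a bookkeeping statement that packages the quasi-optimality of HOSVD with the triangle inequality in a form convenient for later convergence analysis (e.g., controlling retraction error in the Armijo step~\cref{eq: Armijo stepsize} and in the sufficient-decrease arguments sketched in~\cref{fig: proof sketch}).
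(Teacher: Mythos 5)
Your argument is correct and coincides with the paper's own proof: triangle inequality through $\tensA$, the quasi-optimality bound~\cref{eq: quasi-optimal HOSVD}, and the optimality of the metric projection using $\tensX\in\tensM_{\leq\vecr}$ as a feasible competitor. Nothing is missing.
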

\begin{proof}
    It follows from~\eqref{eq: quasi-optimal HOSVD} that
        \begin{align*}
            \|\proj_{\leq\vecr}^\mathrm{HO}(\tensA)-\tensX\|_\frob&\leq\|\proj_{\leq\vecr}^\mathrm{HO}(\tensA)-\tensA\|_\frob+\|\tensA-\tensX\|_\frob\\
            &\leq\sqrt{d}\|\proj_{\leq\vecr}(\tensA)-\tensA\|_\frob+\|\tensA-\tensX\|_\frob\leq(\sqrt{d}+1)\|\tensA-\tensX\|_\frob,
        \end{align*}
    where the last inequality comes from $\tensX\in\tensM_{\leq\vecr}$.
\end{proof}

Subsequently, we develop a lower bound for the stepsize in Armijo condition~\cref{eq: Armijo stepsize}. For the sake of brevity, we denote the metric projection $\tensV_\tensX=\proj_{\tangent_{\tensX}\!\tensM_{\leq\vecr}}(-\nabla f(\tensX))$ and the approximate projection $\tilde{\tensV}_\tensX=\approj_{\tangent_{\tensX}\!\tensM_{\leq\vecr}}(-\nabla f(\tensX))$ in~\cref{eq: an approximate projection}.
\begin{proposition}\label{prop: backtracking GRAP}
    Given a point $\tensX\in\tensM_{\leq\vecr}$ satisfying $\|\tensV_\tensX\|_\frob\neq 0$ and $\bar{s}\in(0,\min\{1/\|\tensV_\tensX\|_\frob,1\}]$. Consider the closed ball $B_\tensX=B[\tensX,(\sqrt{d}+1)\bar{s}\|\tensV_\tensX\|_\frob]$. Then, it holds that $\proj^{\ho}_{\leq\vecr}(\tensX+s\tilde{\tensV}_\tensX)\in B_\tensX$ and the Armijo condition
    \begin{equation}
        \label{eq: Armijo GRAP}
        f(\tensX)-f(\proj_{\leq\vecr}^{\mathrm{HO}}(\tensX+s\tilde{\tensV}_\tensX))\geq s a \tilde{\omega}(\tensX)^2\|\tensV_\tensX\|_\frob^2
    \end{equation}
    for some $s\in[\underline{s}(\tensX,B_\tensX),\bar{s}]$, where $\underline{s}(\tensX,B_\tensX)=\min\{\rho(1-a)/\tilde{a}(\tensX,B_\tensX),\rho\bar{s}\}$ and
    \[\tilde{a}(\tensX,B_\tensX)=\left\{\begin{array}{ll}
        \frac12 L_{B_\tensX} & \text{if}\ \tensX=0,\\
        \|\nabla f(\tensX)\|_\frob\sqrt{d} c(\tensX)+\frac{L_{B_\tensX}}{2}(\sqrt{d}+1)^2 & \text{otherwise.}
    \end{array}\right.\]
\end{proposition}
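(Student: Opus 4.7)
The plan is to separate the two assertions cleanly: first to confirm $\proj^{\ho}_{\leq\vecr}(\tensX+s\tilde{\tensV}_\tensX)\in B_\tensX$ for every $s\in[0,\bar{s}]$ so that the Lipschitz constant $L_{B_\tensX}$ is legitimately usable, and then to combine the Lipschitz descent inequality with the curvature bound~\cref{eq: curvature} and the HOSVD bound of~\cref{lem: HOSVD inequality} to turn the Armijo inequality~\cref{eq: Armijo GRAP} into a quadratic-in-$s$ condition that is guaranteed after finitely many backtracking steps. A small but necessary preliminary is the inequality $\|\tilde{\tensV}_\tensX\|_\frob\leq\|\tensV_\tensX\|_\frob$, which I would derive from the fact that the metric projection onto the cone $\tangent_\tensX\!\tensM_{\leq\vecr}$ satisfies $\langle-\nabla f(\tensX),\tensV_\tensX\rangle=\|\tensV_\tensX\|_\frob^2$ (optimizing the scalar $t\mapsto\|t\tensV_\tensX+\nabla f(\tensX)\|_\frob^2$ over $t\geq 0$), paired with the same identity for $\tilde{\tensV}_\tensX$ given by~\cref{prop: angle condition GRAP}.

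For the ball containment, I would apply~\cref{lem: HOSVD inequality} to $\tensA=\tensX+s\tilde{\tensV}_\tensX$ (with the reference point $\tensX\in\tensM_{\leq\vecr}$) to obtain $\|\proj^{\ho}_{\leq\vecr}(\tensX+s\tilde{\tensV}_\tensX)-\tensX\|_\frob\leq(\sqrt{d}+1)s\|\tilde{\tensV}_\tensX\|_\frob$, and then use $\|\tilde{\tensV}_\tensX\|_\frob\leq\|\tensV_\tensX\|_\frob$ and $s\leq\bar{s}$ to bound this by $(\sqrt{d}+1)\bar{s}\|\tensV_\tensX\|_\frob$, exactly the radius of $B_\tensX$. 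This justifies using $L_{B_\tensX}$ on $\tensX$ and $\tensY:=\proj^{\ho}_{\leq\vecr}(\tensX+s\tilde{\tensV}_\tensX)$.

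The core of the argument is the descent estimate. Writing $\tensY-\tensX=s\tilde{\tensV}_\tensX+(\tensY-(\tensX+s\tilde{\tensV}_\tensX))$ in the Lipschitz inequality~\cref{eq: Lipschitz}, I would use $\langle-\nabla f(\tensX),\tilde{\tensV}_\tensX\rangle=\|\tilde{\tensV}_\tensX\|_\frob^2$ from~\cref{prop: angle condition GRAP} to cancel the first-order term, and Cauchy-Schwarz plus the HOSVD quasi-optimality~\cref{eq: quasi-optimal HOSVD} and~\cref{eq: curvature} to control the residual $\|\tensY-(\tensX+s\tilde{\tensV}_\tensX)\|_\frob\leq\sqrt{d}\,c(\tensX)s^2\|\tilde{\tensV}_\tensX\|_\frob^2$ (the condition $s\|\tilde{\tensV}_\tensX\|_\frob\leq 1$ needed by~\cref{eq: curvature} follows from $\bar{s}\leq 1/\|\tensV_\tensX\|_\frob$ together with $\|\tilde{\tensV}_\tensX\|_\frob\leq\|\tensV_\tensX\|_\frob$). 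Together with $\|\tensY-\tensX\|_\frob\leq(\sqrt{d}+1)s\|\tilde{\tensV}_\tensX\|_\frob$, this yields
\begin{equation*}
f(\tensY)-f(\tensX)\leq -s\|\tilde{\tensV}_\tensX\|_\frob^2+s^2\,\tilde{a}(\tensX,B_\tensX)\,\|\tilde{\tensV}_\tensX\|_\frob^2
\end{equation*}
in the case $\tensX\neq 0$. The case $\tensX=0$ is handled separately since the curvature lemma does not apply: here the tangent cone at the origin equals $\tensM_{\leq\vecr}$ itself, so $\tensX+s\tilde{\tensV}_\tensX=s\tilde{\tensV}_\tensX$ already lies in $\tensM_{\leq\vecr}$ and is fixed by $\proj^{\ho}_{\leq\vecr}$; the residual term vanishes entirely, leaving only $\frac12 L_{B_\tensX}s^2\|\tilde{\tensV}_\tensX\|_\frob^2$, which explains the simpler form of $\tilde{a}(\tensX,B_\tensX)$.

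Finally, the angle condition~\cref{eq: angle condition GRAP} combined with $\langle-\nabla f(\tensX),\tilde{\tensV}_\tensX\rangle=\|\tilde{\tensV}_\tensX\|_\frob^2$ gives $\|\tilde{\tensV}_\tensX\|_\frob^2\geq\tilde{\omega}(\tensX)^2\|\tensV_\tensX\|_\frob^2$, so the right-hand side of Armijo~\cref{eq: Armijo GRAP} is dominated by $sa\|\tilde{\tensV}_\tensX\|_\frob^2$. Consequently~\cref{eq: Armijo GRAP} reduces to $s\,\tilde{a}(\tensX,B_\tensX)\leq 1-a$. The backtracking rule $s=\rho^l\bar{s}$ therefore returns at worst an $l$ for which $\rho^{l-1}\bar{s}>(1-a)/\tilde{a}(\tensX,B_\tensX)$, so the accepted stepsize obeys $s>\rho(1-a)/\tilde{a}(\tensX,B_\tensX)$, and of course $s\geq\rho\bar{s}$ whenever $l\leq 1$; taking the minimum yields the claimed lower bound $\underline{s}(\tensX,B_\tensX)$. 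The main technical hurdle I anticipate is the bookkeeping between the metric projection, HOSVD residual, and curvature — in particular tracing the factor $c(\tensX)$ through the chain and verifying that the hypothesis $s\|\tilde{\tensV}_\tensX\|_\frob\leq 1$ of~\cref{eq: curvature} is not violated during backtracking.
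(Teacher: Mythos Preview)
Your proposal is correct and follows essentially the same route as the paper's proof: ball containment via \cref{lem: HOSVD inequality}, the descent estimate by splitting $\tensY-\tensX=s\tilde{\tensV}_\tensX+(\tensY-(\tensX+s\tilde{\tensV}_\tensX))$ in~\cref{eq: Lipschitz} and controlling the residual with~\cref{eq: quasi-optimal HOSVD} and~\cref{eq: curvature}, the separate treatment of $\tensX=0$ via $\tangent_0\!\tensM_{\leq\vecr}=\tensM_{\leq\vecr}$, and the reduction of Armijo to $s\,\tilde{a}(\tensX,B_\tensX)\leq 1-a$ before the standard backtracking bound. You even make explicit the inequality $\|\tilde{\tensV}_\tensX\|_\frob\leq\|\tensV_\tensX\|_\frob$, which the paper uses without comment.
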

\begin{proof}
    First, we prove that 
    \begin{equation}
        \label{eq: GRAP sufficient decrease}
        f(\proj^{\ho}_{\leq\vecr}(\tensX+s\tilde{\tensV}_\tensX))\leq f(\tensX)+\|\tilde{\tensV}_\tensX\|_\frob^2 s(-1+s\tilde{a}(\tensX,B_\tensX))
    \end{equation}
    for all $s\in[0,\bar{s}]$. It follows from~\cref{lem: HOSVD inequality} that 
    \begin{equation*}
        \|\proj^{\ho}_{\leq\vecr}(\tensX+s\tilde{\tensV}_\tensX)-\tensX\|_\frob\leq(\sqrt{d}+1)s\|\tilde{\tensV}_\tensX\|_\frob\leq(\sqrt{d}+1)\bar{s}\|\tensV_\tensX\|_\frob.
    \end{equation*}
    If $\tensX=0$, it holds that $\tilde{\tensV}_{\tensX}\in\tangent_{0}\!\tensM_{\leq\vecr}=\tensM_{\leq\vecr}$ and thus~\cref{eq: GRAP sufficient decrease} immediately follows from~\cref{eq: Lipschitz}. Otherwise, we consider the case $\tensX\neq 0$. Since $s\|\tilde{\tensV}_\tensX\|_\frob\leq s\|\tensV_\tensX\|_\frob\leq 1$, it follows from~\cref{eq: Lipschitz,eq: curvature} that
        \begin{align*}
            &~~~~f(\proj^{\ho}_{\leq\vecr}(\tensX+s\tilde{\tensV}_\tensX))-f(\tensX)\\
            &\leq\langle\nabla f(\tensX),\proj^{\ho}_{\leq\vecr}(\tensX+s\tilde{\tensV}_\tensX)-\tensX\rangle+\frac{L_{B_\tensX}}{2}\|\proj^{\ho}_{\leq\vecr}(\tensX+s\tilde{\tensV}_\tensX)-\tensX\|_\frob^2\\
            &=\langle\nabla f(\tensX),\proj^{\ho}_{\leq\vecr}(\tensX+s\tilde{\tensV}_\tensX)-(\tensX+s\tilde{\tensV}_\tensX)+s\tilde{\tensV}_\tensX\rangle+\frac{L_{B_\tensX}}{2}\|\proj^{\ho}_{\leq\vecr}(\tensX+s\tilde{\tensV}_\tensX)-\tensX\|_\frob^2\\
            &\leq\|\nabla f(\tensX)\|_\frob\sqrt{d}\|\proj_{\leq\vecr}(\tensX+s\tilde{\tensV}_\tensX)\!-\!(\tensX+s\tilde{\tensV}_\tensX)\|_\frob \!-\! s\|\tilde{\tensV}_\tensX\|_\frob^2+\frac{L_{B_\tensX}}{2}(\sqrt{d}+1)^2s^2\|\tilde{\tensV}_\tensX\|_\frob^2\\
            &\leq\|\nabla f(\tensX)\|_\frob\sqrt{d} c(\tensX) s^2\|\tilde{\tensV}_\tensX\|_\frob^2-s\|\tilde{\tensV}_\tensX\|_\frob^2+\frac{L_{B_\tensX}}{2}(\sqrt{d}+1)^2s^2\|\tilde{\tensV}_\tensX\|_\frob^2\\
            &\leq s\|\tilde{\tensV}_\tensX\|_\frob^2(-1+\|\nabla f(\tensX)\|_\frob\sqrt{d} c(\tensX) s+\frac{L_{B_\tensX}}{2}(\sqrt{d}+1)^2s)\\
            &=s\|\tilde{\tensV}_\tensX\|_\frob^2(-1+s\tilde{a}(\tensX,B_\tensX)),
        \end{align*}
    where $\tilde{a}(\tensX,B_\tensX)=\|\nabla f(\tensX)\|_\frob\sqrt{d} c(\tensX)+\frac{L_{B_\tensX}}{2}(\sqrt{d}+1)^2$ and the second inequality follows from $\langle\nabla f(\tensX),\tilde{\tensV}_\tensX\rangle=-\|\tilde{\tensV}_\tensX\|_\frob^2$ in~\cref{prop: angle condition GRAP}, Cauchy--Schwarz inequality and the quasi-optimality~\cref{eq: quasi-optimal HOSVD}.

    Subsequently, we observe that 
    \[s\|\tilde{\tensV}_\tensX\|_\frob^2(-1+s\tilde{a}(\tensX,B_\tensX))\leq -s a \|\tilde{\tensV} _\tensX\|_\frob^2\leq -s a \tilde{\omega}(\tensX)^2 \|\tensV_\tensX\|_\frob^2\] 
    if and only if $s\leq(1-a)/\tilde{a}(\tensX,B_\tensX)$. Therefore, \cref{eq: Armijo stepsize} can be satisfied for all $s\in[\rho(1-a)/\tilde{a}(\tensX,B_\tensX),(1-a)/\tilde{a}(\tensX,B_\tensX)]$. Note that if $\bar{s}\leq(1-a)/\tilde{a}(\tensX,B_\tensX)$, the Armijo condition~\cref{eq: Armijo stepsize} holds for $s\in[\rho\bar{s},\bar{s}]$. Consequently, the Armijo condition~\cref{eq: Armijo stepsize} holds for some $s\in[\underline{s}(\tensX,B_\tensX),\bar{s}]$, where \[\underline{s}(\tensX,B_\tensX)=\min\{\rho(1-a)/\tilde{a}(\tensX,B_\tensX),\rho\bar{s}\}.\]
\end{proof}

\subsection{Convergence to stationary points}\label{subsec: GRAP-R convergence}
Let $\{\tensX^{(t)}\}$ be a sequence generated by GRAP-R method in~\cref{alg: GRAP-R}. We show that all accumulation points of $\{\tensX^{(t)}\}$ are stationary. To this end, we validate the following sufficient decrease condition, which implies that one step of the GRAP-R method---let $\tensX=\tensX^{(t)}$ and $\tensY=\tensX^{(t+1)}$---is guaranteed to decrease the function value when stationarity is not attained.

\begin{proposition}\label{prop: GRAP-R decrease}
    For all $\underline{\tensX}\in\tensM_{\leq\vecr}$ and $\|\tensV_{\underline{\tensX}}\|_\frob\neq 0$, there exists $\varepsilon(\underline{\tensX})>0$ and $\delta(\underline{\tensX})>0$, such that $f(\tensY)-f(\tensX)<\delta(\underline{\tensX})$ holds for all $\tensX\in B[\underline{\tensX},\varepsilon(\underline{\tensX})]\cap\tensM_{\leq\vecr}$ and $\tensY$ generated by GRAP-R in~\cref{alg: GRAP-R} for one step with the initial guess $\tensX$.
\end{proposition}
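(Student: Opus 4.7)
Let $\underline{\vecr}^* := \ranktc(\underline{\tensX})$. The idea is to single out, among the candidate ranks swept by GRAP-R, the one equal to $\underline{\vecr}^*$, and to prove that the associated iterate $\tensY_{\underline{\vecr}^*}^{(t)}$ alone achieves a uniform descent; since line~\ref{line: GRAP-R update} selects the candidate with the smallest objective value, this descent is inherited by $\tensY = \tensX^{(t+1)}$. To begin, I pick $\varepsilon$ small enough that $\underline{\vecr}^* \in I_1 \times \cdots \times I_d$ for every $\tensX \in B[\underline{\tensX},\varepsilon] \cap \tensM_{\leq\vecr}$. When $\underline{\tensX} \neq 0$, applying both parts of~\cref{prop: rank delta} to each mode-$k$ unfolding with $\varepsilon \leq \min\{\Delta,\sigma_{\min}(\underline{\tensX})\}$ gives $\rank_\Delta(\matx_{(k)}) \leq \underline{r}_k^* \leq \rank(\matx_{(k)})$; when $\underline{\tensX} = 0$, the index $\underline{\vecr}^* = 0$ lies in every $I_1 \times \cdots \times I_d$ trivially. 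Hence GRAP-R always constructs the candidate $\tensX_* := \proj_{\underline{\vecr}^*}^{\ho}(\tensX)$, and~\cref{lem: HOSVD inequality} yields $\|\tensX_* - \underline{\tensX}\|_\frob \leq (\sqrt{d}+1)\varepsilon$.

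Next I establish a uniform Armijo decrease at $\tensX_*$. Shrinking $\varepsilon$ further, continuity of singular values forces $\ranktc(\tensX_*) = \underline{\vecr}^*$ exactly and $\sigma_{\min}(\tensX_*) \geq \sigma_{\min}(\underline{\tensX})/2$, so the curvature constant $c(\tensX_*)$ from~\cref{eq: curvature} and the norm $\|\nabla f(\tensX_*)\|_\frob$ remain bounded, as does the Lipschitz constant $L_B$ on a fixed enclosing closed ball. Crucially, $\tilde{\omega}(\tensX_*)$ in~\cref{prop: angle condition GRAP} depends only on the fixed tuples $\vecr$ and $\underline{\vecr}^*$ and therefore equals a positive constant $\tilde{\omega}_0$. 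The key continuity fact is that, as $\tensX_* \to \underline{\tensX}$ inside the single stratum $\tensM_{\underline{\vecr}^*}$, the tangent cones described by~\cref{eq: Tucker tangent cone} vary continuously through the Tucker factors, which yields $\|\tensV_{\tensX_*}\|_\frob \geq \|\tensV_{\underline{\tensX}}\|_\frob/2 > 0$ for small $\varepsilon$. Feeding these uniform bounds into~\cref{prop: backtracking GRAP} produces a stepsize $s \geq \underline{s}_0 > 0$ and a uniform constant $2\delta(\underline{\tensX}) > 0$ with $f(\tensX_*) - f(\tensY_{\underline{\vecr}^*}^{(t)}) \geq 2\delta(\underline{\tensX})$.

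To bridge $\tensX$ and $\tensX_*$, I invoke continuity of $f$: shrinking $\varepsilon$ once more makes $|f(\tensX) - f(\tensX_*)| \leq \delta(\underline{\tensX})$, hence $f(\tensX) - f(\tensY_{\underline{\vecr}^*}^{(t)}) \geq \delta(\underline{\tensX})$, and the selection $\tensY = \argmin_{\underline{\vecr}} f(\tensY_{\underline{\vecr}}^{(t)})$ transfers the bound to $\tensY$. The main obstacle is the continuity used above for $\|\tensV_{\tensX_*}\|_\frob$: the map $\tensX \mapsto \|\tensV_\tensX\|_\frob$ is only lower semicontinuous across rank strata in general, and this failure is precisely the apocalypse phenomenon. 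The rank-decreasing mechanism is what circumvents it, by forcing $\tensX_*$ back onto the stratum $\tensM_{\underline{\vecr}^*}$ of $\underline{\tensX}$, where~\cref{eq: Tucker tangent cone} delivers Painlev\'e--Kuratowski continuity of the tangent cones; verifying this rigorously requires choosing Tucker factors of $\tensX_*$ that converge, up to an orthogonal gauge, to those of $\underline{\tensX}$, and handling the in-mode projection and the rank-growth piece of~\cref{eq: an approximate projection} separately.
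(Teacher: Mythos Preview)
Your overall architecture coincides with the paper's: pick the candidate rank $\underline{\vecr}^*=\ranktc(\underline{\tensX})$, show it lies in $I_1\times\cdots\times I_d$ via~\cref{prop: rank delta}, bound $\|\tensX_*-\underline{\tensX}\|_\frob$ by~\cref{lem: HOSVD inequality}, feed uniform bounds on $\sigma_{\min}(\tensX_*)$, $\|\nabla f(\tensX_*)\|_\frob$, $L_B$ into~\cref{prop: backtracking GRAP}, and absorb $|f(\tensX)-f(\tensX_*)|$ by continuity of $f$. The three-inequality skeleton $f(\tensY)\leq f(\tensY_{\underline{\vecr}^*})\leq f(\tensX_*)-3\delta\leq f(\tensX)-\delta$ is exactly the one the paper uses.

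The one substantive difference is how you obtain the lower bound on $\|\tensV_{\tensX_*}\|_\frob$. The paper splits into three cases according to whether $\underline{\vecr}^*=\vecr$, $\underline{\vecr}^*<\vecr$, or neither, and in the last two cases it does \emph{not} invoke continuity of $\tensX\mapsto\|\tensV_\tensX\|_\frob$ directly; instead it decomposes $\|\tensV_{\tensX_*}\|_\frob^2$ via~\cref{eq: exact projection} into pieces of the form $\|\nabla f(\tensX_*)\times_{k\notin I}\proj_{\hat{\matu}_k}\|_\frob^2$ and $\|(\nabla f(\tensX_*))_{(k)}\proj_{\Span(\hat{\matx}_{(k)}^\top)}\|_\frob^2$, bounds these below using~\cref{lem: inequality}, and then uses continuity of those \emph{intrinsic} quantities (which are independent of the non-unique maximizers $\matu_{k,1}^*$) to pass to $\underline{\tensX}$. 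This yields explicit formulas for $\delta(\underline{\tensX})$. Your route—arguing that $\tangent_{\tensX_*}\!\tensM_{\leq\vecr}$ is Painlev\'e--Kuratowski continuous along the stratum $\tensM_{\underline{\vecr}^*}$, hence $\|\tensV_{\tensX_*}\|_\frob\to\|\tensV_{\underline{\tensX}}\|_\frob$—is correct (the tangent cone is a union of linear subspaces parametrized by compact Stiefel fibers depending continuously on the Tucker factors, so a Berge-type argument applies), and it collapses the paper's case analysis into a single step. What you lose is the explicit constant; what you gain is brevity and a cleaner conceptual explanation of why the rank-decreasing step defeats apocalypse: it restores the continuity of $\|\tensV_\tensX\|_\frob$ that fails across strata.
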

\begin{proof}
    Since the proof is technical, we give a proof sketch in~\cref{fig: proof sketch GRAP-R}. Generally speaking, given $\underline{\tensX}\in\tensM_{\leq\vecr}$ with $\underline{\vecr}=\ranktc(\underline{\tensX})$, if $\underline{\vecr}=\vecr$, we prove the result by the continuity of the projected anti-gradient. Otherwise, we project $\tensX$ onto $\tensM_{\underline{\vecr}}$, and adopt line search in~\cref{eq: GRAP mapping} to yield $\tensY_{\underline{\vecr}}$. Consequently, we obtain that 
    \[f(\tensY)\leq f(\tensY_{\underline{\vecr}})\leq f(\proj^{\mathrm{HO}}_{\underline{\vecr}}(\tensX))-3\delta(\underline{\tensX})\leq f(\tensX)-\delta(\underline{\tensX})\]
    by continuity of $f$ for $\tensX\in B[\underline{\tensX},\varepsilon(\underline{\tensX})]\cap\tensM_{\leq\vecr}$ with an appropriate $\varepsilon(\underline{\tensX})$.
    
    \begin{figure}[htbp]
        \centering
        \begin{tikzpicture}
            \def\h{2};
            \node[draw,rounded corners] (O) at (0,0) {Goal: $f(\tensY)-f(\tensX)\leq-\delta(\underline{\tensX})<0$, for all $\|\tensX-\underline{\tensX}\|_\frob\leq\varepsilon(\underline{\tensX})$};

            \node[draw,rounded corners,inner sep=10pt] (T) at ($(O)-(0,2)$) {\begin{minipage}{10cm}
                \centering\vspace{-2mm}
                Three inequalities\vspace{5mm}\\
                $f(\tensY)\quad\leq\quad f(\tensY_{\underline{\vecr}})\quad\leq\quad f(\proj^{\mathrm{HO}}_{\underline{\vecr}}(\tensX))-3\delta(\underline{\tensX})\quad\leq\quad f(\tensX)-\delta(\underline{\tensX})$
            \end{minipage}};

            \draw[->,arrows = {-Latex[width=5pt, length=5pt]}] (T) -- (O);

            \coordinate (C) at ($(T)-(3.6,0.3)$);
            \def\x{1.45};
            \def\y{0.25};
            \draw[densely dotted] ($(C)+(\x,\y)$) -- ($(C)+(\x,-\y)$) -- ($(C)+(-\x,-\y)$) -- ($(C)+(-\x,\y)$) -- cycle;
            
            \node[draw,densely dotted] (F) at ($(T)+(-4.5,-\h)$) {Line~\ref{line: GRAP-R update} in~\cref{alg: GRAP-R}};
            \draw[->,arrows = {-Latex[width=5pt, length=5pt]}] (F) -- ($(C)-(0,\y)$);

            \coordinate (C) at ($(T)-(0.5,0.3)$);
            \def\x{2.6};
            \def\y{0.4};
            \draw[dashed,rounded corners] ($(C)+(\x,\y)$) -- ($(C)+(\x,-\y)$) -- ($(C)+(-\x,-\y)$) -- ($(C)+(-\x,\y)$) -- cycle;

            \node[draw,dashed,rounded corners] (S) at ($(T)+(0,-\h)$) {Armijo condition~\cref{eq: Armijo GRAP}};
            \draw[->,arrows = {-Latex[width=5pt, length=5pt]}] (S) -- ($(T)-(0,\y)-(0,0.3)$);

            \coordinate (C) at ($(T)+(1.95,-0.3)$);
            \def\x{3.1};
            \def\y{0.32};
            \draw[densely dotted] ($(C)+(\x,\y)$) -- ($(C)+(\x,-\y)$) -- ($(C)+(-\x,-\y)$) -- ($(C)+(-\x,\y)$) -- cycle;

            \node[draw,densely dotted] (Third) at ($(T)+(4.5,-\h)$) {\cref{lem: HOSVD inequality}};
            \draw[->,arrows = {-Latex[width=5pt, length=5pt]}] (Third) -- ($(C)+(0.5*\x,-\y)$);
            \draw[->,dashed,arrows = {-Latex[width=5pt, length=5pt]}] (Third) -- (S);

            \node[draw,densely dotted] (Lem1) at ($(S)+(-2,-0.7*\h)$) {Angle condition~\cref{eq: angle condition GRAP}};
            \draw[->,dashed,arrows = {-Latex[width=5pt, length=5pt]}] (Lem1) -- (S);
            
            \node[draw,densely dotted] (Lem2) at ($(S)+(1.3,-0.7*\h)$) {Bound~\cref{eq: curvature}};
            \draw[->,dashed,arrows = {-Latex[width=5pt, length=5pt]}] (Lem2) -- (S);

            \node[draw,densely dotted] (Lem3) at ($(Third)+(0,-0.7*\h)$) {Quasi-optimality~\cref{eq: quasi-optimal HOSVD}};
            \draw[->,dashed,arrows = {-Latex[width=5pt, length=5pt]}] (Lem3) -- (Third);

        \end{tikzpicture}
        \caption{Proof sketch of~\cref{prop: GRAP-R decrease}.}
        \label{fig: proof sketch GRAP-R}
    \end{figure}

    First, we introduce auxiliary variables. Since $\nabla f$ is continuous and $\|\nabla f(\underline{\tensX})\|_\frob\neq 0$, there exists $\varepsilon_1(\underline{\tensX})>0$, such that 
    \[\|\nabla f(\tensX)\|_\frob\in[\frac12\|\nabla f(\underline{\tensX})\|_\frob,\frac{3}{2}\|\nabla f(\underline{\tensX})\|_\frob]\]
    for all $\tensX\in B[\underline{\tensX},\varepsilon_1(\underline{\tensX})]$. For all $\tensZ\in B[\tensX,\sqrt{d+1}\|\tensV_\tensX\|_\frob]$, it holds that
    \begin{align*}
        \|\tensZ-\underline{\tensX}\|_\frob&\leq\|\tensZ-\tensX\|_\frob+\|\tensX-\underline{\tensX}\|_\frob\leq \sqrt{d+1}\|\tensV_\tensX\|_\frob + \varepsilon_1(\underline{\tensX})\\
        &\leq \sqrt{d+1}\|\nabla f(\tensX)\| + \varepsilon_1(\underline{\tensX})\leq \bar{\varepsilon}(\underline{\tensX}):=\frac{3\sqrt{d+1}}{2}\|\nabla f(\underline{\tensX})\|_\frob+\varepsilon_1(\underline{\tensX}).
    \end{align*}
    Therefore, we obtain that 
    \[B[\tensX,\sqrt{d+1}\bar{s}\|\tensV_\tensX\|_\frob]\subseteq B[\tensX,\sqrt{d+1}\|\tensV_\tensX\|_\frob]\subseteq B[\underline{\tensX},\bar{\varepsilon}(\underline{\tensX})]\] 
    holds for all $\tensX\in B[\underline{\tensX},\varepsilon_1(\underline{\tensX})]$.

    Subsequently, we take the auxiliary variables into the Armijo condition in~\cref{eq: Armijo GRAP}. We adopt~\cref{eq: Armijo GRAP} to $\tensX\in B[\underline{\tensX},\varepsilon_1(\underline{\tensX})]\cap\tensM_{\leq\vecr}$ and closed ball $B[\tensX,\sqrt{d+1}\bar{s}\|\tensV_\tensX\|_\frob]$ and obtain that
        \begin{align*}
            f(\tensX)-f(\tilde{\tensX})&\geq a \underline{s}(\tensX,B[\tensX,\sqrt{d+1}\bar{s}\|\tensV_\tensX\|_\frob])\tilde{\omega}^2 \|\tensV_\tensX\|_\frob^2\\
            &\geq a \tilde{\omega}^2\|\tensV_\tensX\|_\frob^2\min\{\frac{\rho(1-a)}{\tilde{a}(\tensX,B[\underline{\tensX},\bar{\varepsilon}(\underline{\tensX})])},\rho,\frac{\rho}{\|\tensV_\tensX\|_\frob}\}\\
            &\geq a\tilde{\omega}^2\|\tensV_\tensX\|_\frob^2\min\{\frac{\rho(1-a)}{\tilde{a}(\tensX,B[\underline{\tensX},\bar{\varepsilon}(\underline{\tensX})])},\rho,\frac{\rho}{\|\nabla f(\tensX)\|_\frob}\}\\
            &\geq a\tilde{\omega}^2\|\tensV_\tensX\|_\frob^2\min\{\frac{\rho(1-a)}{\tilde{a}(\tensX,B[\underline{\tensX},\bar{\varepsilon}(\underline{\tensX})])},\rho,\frac{2\rho}{3\|\nabla f(\underline{\tensX})\|_\frob}\}.
        \end{align*}
    where $\tilde{\tensX}=\proj_{\leq\vecr}^{\mathrm{HO}}(\tensX+\underline{s}(\tensX,B[\tensX,\sqrt{d+1}\bar{s}\|\tensV_\tensX\|_\frob])\tilde{\tensV}_\tensX)$. 
    
    We aim to provide a lower bound of $\rho(1-a)/\tilde{a}(\tensX,B[\underline{\tensX},\bar{\varepsilon}(\underline{\tensX})])$, or equivalently an upper bound of $\tilde{a}(\tensX,B[\underline{\tensX},\bar{\varepsilon}(\underline{\tensX})])$, from the terms only depending on $\underline{\tensX}$. Recall that
    \[\tilde{a}(\tensX,B[\underline{\tensX},\bar{\varepsilon}(\underline{\tensX})])=\left\{\begin{array}{ll}
        \frac12 L_{B[\underline{\tensX},\bar{\varepsilon}(\underline{\tensX})]} & \text{if}\ \tensX=0,\\
        \|\nabla f(\tensX)\|_\frob\sqrt{d} c(\tensX) \bar{s}+\frac{L_{B[\underline{\tensX},\bar{\varepsilon}(\underline{\tensX})]}}{2}(\sqrt{d}+1)^2 & \text{otherwise.}
    \end{array}\right.\]
    If $\tensX=0$, the claim is ready. If $\tensX\neq 0$, we further simplify $\tilde{a}(\tensX,B[\underline{\tensX},\bar{\varepsilon}(\underline{\tensX})])$ and yield
        \begin{align}
            \tilde{a}(\tensX,B[\underline{\tensX},\bar{\varepsilon}(\underline{\tensX})])
            &=\|\nabla f(\tensX)\|_\frob\sqrt{d} (\frac{1}{\sigma_{\min}(\tensX)^2}+1)^{\frac{d}{2}}+\frac{L_{B[\underline{\tensX},\bar{\varepsilon}(\underline{\tensX})]}}{2}(\sqrt{d}+1)^2\nonumber\\
            &\leq\frac{3\sqrt{d}}{2}\|\nabla f(\underline{\tensX})\|_\frob (\frac{1}{\sigma_{\min}(\tensX)^2}+1)^{\frac{d}{2}}+\frac{L_{B[\underline{\tensX},\bar{\varepsilon}(\underline{\tensX})]}}{2}(\sqrt{d}+1)^2\label{eq: aim of proof GRAP-R}
        \end{align}
    Since the continuity of $\tensX\mapsto(\frac{1}{\sigma_{\min}(\tensX)^2}+1)$ and a lower bound of $\|\tensV_\tensX\|_\frob$ depend on the Tucker ranks of $\tensX$ and $\underline{\tensX}$, we discuss three cases: $\ranktc(\underline{\tensX})=\vecr$, $\ranktc(\underline{\tensX})<\vecr$, and otherwise.

    \textbf{Case 1:}
    If $\ranktc(\underline{\tensX})=\vecr$, $\tensV_{\underline{\tensX}}$ is the Riemannian gradient of $f$, which is continuous on $\tensM_\vecr$. Therefore, there exists $\varepsilon_2(\underline{\tensX})>0$, such that 
    \[\frac12\|\tensV_{\underline{\tensX}}\|_\frob\leq\|\tensV_\tensX\|_\frob\leq\frac{3}{2}\|\tensV_{\underline{\tensX}}\|_\frob\quad\text{and}\quad\frac12\sigma_{\min}(\underline{\tensX})\leq\sigma_{\min}(\tensX)\leq\frac{3}{2}\sigma_{\min}(\underline{\tensX}).\]
    Consequently, let $\varepsilon(\underline{\tensX})=\min\{\varepsilon_1(\underline{\tensX}),\varepsilon_2(\underline{\tensX})\}$ and
    \[\delta(\underline{\tensX})=\frac{a\tilde{\omega}^2\|\tensV_{\underline{\tensX}}\|_\frob^2}{4}\min\{\frac{\rho(1-a)}{\bar{a}(\underline{\tensX})},\rho,\frac{2\rho}{3\|\nabla f(\underline{\tensX})\|_\frob}\},\] 
    where $\bar{a}(\underline{\tensX})=\frac{3\sqrt{d}}{2}\|\nabla f(\underline{\tensX})\|_\frob (\frac{4}{\sigma_{\min}(\underline{\tensX})^2}+1)^{\frac{d}{2}}+\frac{L_{B[\underline{\tensX},\bar{\varepsilon}(\underline{\tensX})]}}{2}(\sqrt{d}+1)^2$. We obtain that 
    \[f(\tensY)\leq f(\tensY_{\vecr})\leq f(\tensX)-\delta(\underline{\tensX}).\]

    \textbf{Case 2:}
    If $\ranktc(\underline{\tensX})<\vecr$, let
    \[\delta(\underline{\tensX})=\frac{a\tilde{\omega}^2}{12}\|\nabla f(\underline{\tensX})\|_\frob\prod_{k=1}^d\frac{r_k-\underline{r}_k}{\min\{n_k,n_{-k}\}}\min\{\frac{\rho(1-a)}{\bar{a}(\underline{\tensX})},\rho,\frac{2\rho}{3\|\nabla f(\underline{\tensX})\|_\frob}\},\]
    where 
    \begin{equation} 
        \label{eq: bar a GRAP}
        \bar{a}(\underline{\tensX})=
        \left\{
            \begin{array}{ll}
                \frac12 L_{B[\underline{\tensX},\bar{\varepsilon}(\underline{\tensX})]} & \text{if}\ \underline{\tensX}=0,\\
                \frac{3\sqrt{d}}{2}\|\nabla f(\underline{\tensX})\|_\frob (\frac{4}{\sigma_{\min}(\underline{\tensX})^2}+1)^{\frac{d}{2}}+\frac{L_{B[\underline{\tensX},\bar{\varepsilon}(\underline{\tensX})]}}{2}(\sqrt{d}+1)^2 & \text{otherwise.}
            \end{array}
        \right.
    \end{equation}
    Note that $\delta(\underline{\tensX})$ only depends on $\underline{\tensX}$. 
    
    Given $\delta(\underline{\tensX})$, we aim to construct $\varepsilon(\underline{\tensX})$. Since $f$ is continuous at $\underline{\tensX}$, there exists $\varepsilon_0(\underline{\tensX})>0$, such that $f(B[\underline{\tensX},\varepsilon_0(\underline{\tensX})])\subseteq[f(\underline{\tensX})-\delta(\underline{\tensX}),f(\underline{\tensX})+\delta(\underline{\tensX})]$. We define
    \[\varepsilon(\underline{\tensX})=\left\{\begin{array}{ll}
        \min\{\frac{\Delta}{2},\frac{1}{\sqrt{d}+1}\varepsilon_0(\underline{\tensX}),\frac{1}{\sqrt{d}+1}\varepsilon_1(\underline{\tensX})\} & \text{if}\ \underline{\tensX}=0,\\
        \min\{\frac{\Delta}{2},\frac{1}{\sqrt{d}+1}\varepsilon_0(\underline{\tensX}),\frac{1}{\sqrt{d}+1}\varepsilon_1(\underline{\tensX}),\frac{1}{2(\sqrt{d}+1)}\sigma_{\min}(\underline{\tensX})\} & \text{otherwise.}
    \end{array}\right.\]
    Denote $\underline{\vecr}=\ranktc(\underline{\tensX})$. For $\tensX\in B[\underline{\tensX},\varepsilon(\underline{\tensX})]\cap\tensM_{\leq\vecr}$, it follows from~\cref{prop: rank delta} that $\rank_\Delta(\matx_{(k)})\leq\underline{r}_k\leq\rank(\matx_{(k)})$ and thus $\tensX_{\underline{\vecr}}:=\proj^{\ho}_{\underline{\vecr}}(\tensX)$ exists. It follows from~\cref{lem: HOSVD inequality} that
    \[\|\tensX_{\underline{\vecr}}-\underline{\tensX}\|_\frob\leq (\sqrt{d}+1)\|\tensX-\underline{\tensX}\|_\frob\leq\min\{\varepsilon_0(\underline{\tensX}),\varepsilon_1(\underline{\tensX}),\frac{\sigma_{\min}(\underline{\tensX})}{2}\}.\]
    Therefore, we obtain that $f(\tensX_{\underline{\vecr}})\leq f(\tensX) + 2\delta(\underline{\tensX})$.
    
    Subsequently, it suffices to provide an upper bound of $\tilde{a}(\tensX,B[\underline{\tensX},\bar{\varepsilon}(\underline{\tensX})])$ in~\cref{eq: aim of proof GRAP-R} and a lower bound of~$\|\tensV_{\tensX_{\underline{\vecr}}}\|_\frob$. If $\underline{\tensX}\neq 0$, for the upper bound of~\cref{eq: aim of proof GRAP-R}, we observe that $\sigma_{\min}(\tensX_{\underline{\vecr}})\in[\frac12\sigma_{\min}(\underline{\tensX}),\frac{3}{2}\sigma_{\min}(\underline{\tensX})]$, and thus
    \[\tilde{a}(\tensX_{\underline{\vecr}},B[\underline{\tensX},\bar{\varepsilon}(\underline{\tensX})])\leq\frac{3\sqrt{d}}{2}\|\nabla f(\underline{\tensX})\|_\frob (\frac{4}{\sigma_{\min}(\underline{\tensX})^2}+1)^{\frac{d}{2}}+\frac{L_{B[\underline{\tensX},\bar{\varepsilon}(\underline{\tensX})]}}{2}(\sqrt{d}+1)^2,\]
    which only depends on $\underline{\tensX}$. For the lower bound of $\|\tensV_{\tensX_{\underline{\vecr}}}\|_\frob$, it follows from~\cref{eq: inequality} that
    \[\|\tensV_{\tensX_{\underline{\vecr}}}\|_\frob\geq \|\nabla f(\tensX_{\underline{\vecr}})\|_\frob\prod_{k=1}^d\sqrt{\frac{r_k-\underline{r}_k}{\min\{n_k,n_{-k}\}}}\geq\frac{1}{2}\|\nabla f(\underline{\tensX})\|_\frob\prod_{k=1}^d\sqrt{\frac{r_k-\underline{r}_k}{\min\{n_k,n_{-k}\}}}.\]
    Let $\tensY_{\underline{\vecr}}$ generated by GRAP for one step with initial guess $\tensX_{\underline{\vecr}}$. It holds that 
    \begin{align*}
        f(\tensY)&\leq f(\tensY_{\underline{\vecr}})\leq f(\tensX_{\underline{\vecr}})-a\tilde{\omega}^2\|\tensV_{\tensX_{\underline{\vecr}}}\|_\frob^2\min\{\frac{\rho(1-a)}{\tilde{a}(\tensX_{\underline{\vecr}},B[\underline{\tensX},\bar{\varepsilon}(\underline{\tensX})])},\rho,\frac{2\rho}{3\|\nabla f(\underline{\tensX})\|_\frob}\}\\
        &\leq f(\tensX_{\underline{\vecr}})-\frac{a\tilde{\omega}^2}{4}\|\nabla f(\underline{\tensX})\|_\frob\prod_{k=1}^d\frac{r_k-\underline{r}_k}{\min\{n_k,n_{-k}\}}\min\{\frac{\rho(1-a)}{\bar{a}(\underline{\tensX})},\rho,\frac{2\rho}{3\|\nabla f(\underline{\tensX})\|_\frob}\}\\
        &\leq f(\tensX) + 2\delta(\underline{\tensX}) - 3\delta(\underline{\tensX})\\
        &\leq f(\tensX) - \delta(\underline{\tensX}).
    \end{align*}
    Note that if $\underline{\tensX}=0$, we obtain that $\tensX_{\underline{\vecr}}=0$ and prove the same result with the different $\bar{a}(\underline{\tensX})=L_{B[\underline{\tensX},\bar{\varepsilon}(\underline{\tensX})]}/2$.
    
    \textbf{Case 3:}
    If $\underline{\tensX}\in\tensM_{\leq\vecr}\setminus(\tensM_{\vecr}\cup\tensM_{<\vecr})$, we denote $I=\{k\in[d]:\underline{r}_k<r_k\}$ and consider the Tucker decomposition $\underline{\tensX}=\underline{\tensG}\times_{k=1}^d\underline{\matu}_k$ of $\underline{\tensX}$. The proof sketch is similar to the case 2. Let 
    \[\delta(\underline{\tensX})=\frac{a\tilde{\omega}^2}{3}\bar{\delta}(\underline{\tensX})\min\{\frac{\rho(1-a)}{\bar{a}(\underline{\tensX})},\rho,\frac{2\rho}{3\|\nabla f(\underline{\tensX})\|_\frob}\},\]
    where $\bar{a}(\underline{\tensX})$ is given in~\cref{eq: bar a GRAP} and
    \[\bar{\delta}(\underline{\tensX})=\frac12\min\{c_I\|\nabla f(\underline{\tensX})\times_{k\in[d]\setminus I}\proj_{\underline{\matu}_k}\|_\frob^2,\sum_{k=1}^{d}\|(\nabla f(\underline{\tensX}))_{(k)}\proj_{\Span(\underline{\matx}_{(k)}^\top)}\|_\frob^2\}\] 
    with $c_I=\prod_{k\in I}(r_k-\underline{r}_k)/\min\{n_k,n_{-k}\}$.
    Since $f$ is continuous, there exists $\varepsilon_0(\underline{\tensX})\in (0,\varepsilon_3(\underline{\tensX}))$, such that $f(B[\underline{\tensX},\varepsilon_0(\underline{\tensX})])\subseteq[f(\underline{\tensX})-\delta(\underline{\tensX}),f(\underline{\tensX})+\delta(\underline{\tensX})]$, where we defer the definition of $\varepsilon_3(\underline{\tensX})$. Let
    \[\varepsilon(\underline{\tensX})=\min\{\frac{\Delta}{2},\frac{1}{\sqrt{d}+1}\varepsilon_0(\underline{\tensX}),\frac{1}{\sqrt{d}+1}\varepsilon_1(\underline{\tensX}),\frac{1}{2(\sqrt{d}+1)}\sigma_{\min}(\underline{\tensX})\}.\] 
    For $\tensX\in B[\underline{\tensX},\varepsilon(\underline{\tensX})]\cap\tensM_{\leq\vecr}$, it follows from~\cref{prop: rank delta} that $\rank_\Delta(\matx_{(k)})\leq\underline{r}_k\leq\rank(\matx_{(k)})$ for $k\in I$ and thus $\tensX_{\underline{\vecr}}=\proj^{\ho}_{\underline{\vecr}}(\tensX)$ exists. We can obtain
    \[\|\tensX_{\underline{\vecr}}-\underline{\tensX}\|_\frob\leq\varepsilon_0(\underline{\tensX})\leq\varepsilon_3(\underline{\tensX})\quad\text{and}\quad f(\tensX_{\underline{\vecr}})\leq f(\tensX) + 2\delta(\underline{\tensX})\]
    in a same fashion as the case 2. 
    
    Subsequently, it suffices to provide an upper bound of~\cref{eq: aim of proof GRAP-R} and a lower bound of $\|\tensV_{\tensX_{\underline{\vecr}}}\|_\frob$. We can obtain the upper bound of~\cref{eq: aim of proof GRAP-R}
    \[\tilde{a}(\tensX_{\underline{\vecr}},B[\underline{\tensX},\bar{\varepsilon}(\underline{\tensX})])\leq\frac{3\sqrt{d}}{2}\|\nabla f(\underline{\tensX})\|_\frob (\frac{4}{\sigma_{\min}(\underline{\tensX})^2}+1)^{\frac{d}{2}}+\frac{L_{B[\underline{\tensX},\varepsilon_1(\underline{\tensX})]}}{2}(\sqrt{d}+1)^2\]
    in a same fashion as the case 2.

    For the lower bound of $\|\tensV_{\tensX_{\underline{\vecr}}}\|_\frob$, the proof is different from the case 2 since $\tensX_{\underline{\vecr}}\in\tensM_{\leq\vecr}\setminus(\tensM_{\vecr}\cup\tensM_{<\vecr})$. Recall the projection $\tensV_{\tensX_{\underline{\vecr}}}=\proj_{\tangent_{\tensX_{\underline{\vecr}}}\!\tensM_{\leq\vecr}}(-\nabla f(\tensX_{\underline{\vecr}}))$ in~\cref{eq: exact projection} and the Tucker decomposition $\tensX_{\underline{\vecr}}=\hat{\tensG}\times_{k=1}^d\hat{\matu}_k$ of $\tensX_{\underline{\vecr}}$. We observe that 
        \begin{align*}
            \|\tensV_{\tensX_{\underline{\vecr}}}\|_\frob^2
            &=\|\nabla f(\tensX_{\underline{\vecr}})\times_{k\in I}\proj_{[\hat{\matu}_k\ \hat{\matu}_{k,1}^*]}\times_{k\in[d]\setminus I}\proj_{\hat{\matu}_k}\|_\frob^2\\
            &~~~+ \sum_{k\in I}\|\proj^\perp_{[\hat{\matu}_k\ \hat{\matu}_{k,1}^*]}(\nabla f(\tensX_{\underline{\vecr}}))_{(k)}\hat{\matv}_k\hat{\matG}_{(k)}^\dagger\hat{\matG}_{(k)}^{}\hat{\matv}_k^\top\|_\frob^2 \\
            &~~~+ \sum_{k\in[d]\setminus I}\|\proj^\perp_{\hat{\matu}_k}(\nabla f(\tensX_{\underline{\vecr}}))_{(k)}\hat{\matv}_k\hat{\matG}_{(k)}^\dagger\hat{\matG}_{(k)}^{}\hat{\matv}_k^\top\|_\frob^2,
        \end{align*}
    where $\hat{\matv}_k=(\hat{\matu}_j)^{\otimes j\neq k}$ and $\hat{\matG}_{(k)}^\dagger=\hat{\matG}_{(k)}^\top(\hat{\matG}_{(k)}^{}\hat{\matG}_{(k)}^\top)^{-1}$. Since $\|\tensV_{\tensX_{\underline{\vecr}}}\|_\frob\neq 0$ (or the algorithm terminates otherwise), there exists (at least) one non-negative term. If $\|\nabla f(\tensX_{\underline{\vecr}})\times_{k\in I}\proj_{[\hat{\matu}_k\ \hat{\matu}_{k,1}^*]}\times_{k\in[d]\setminus I}\proj_{\hat{\matu}_k}\|_\frob\neq 0$, it follows from~\cref{eq: inequality} that 
        \begin{align*}
            \|\tensV_{\tensX_{\underline{\vecr}}}\|_\frob^2&\geq\|\nabla f(\tensX_{\underline{\vecr}})\times_{k\in[d]\setminus I}\proj_{\hat{\matu}_k}\|_\frob^2\prod_{k\in I}\frac{r_k-\underline{r}_k}{\min\{n_k,n_{-k}\}}>0.
        \end{align*}
    Since $\proj_{\hat{\matu}_k}=\proj_{\Span{(\hat{\matx}_{(k)})}}$ does not depend on a specific Tucker decomposition of~$\tensX_{\underline{\vecr}}$, there exists $\varepsilon_3(\underline{\tensX})>0$, such that 
    \[\|\tensV_{\tensX_{\underline{\vecr}}}\|_\frob^2\geq c_I\|\nabla f(\tensX_{\underline{\vecr}})\times_{k\in[d]\setminus I}\proj_{\hat{\matu}_k}\|_\frob^2\geq\frac{c_I}{2}\|\nabla f(\underline{\tensX})\times_{k\in[d]\setminus I}\proj_{\underline{\matu}_k}\|_\frob^2\]
    for all $\tensX_{\underline{\vecr}}\in B[\underline{\tensX},\varepsilon_3(\underline{\tensX})]\cap\tensM_{\underline{\vecr}}$. If $\|\nabla f(\tensX_{\underline{\vecr}})\times_{k\in I}\proj_{[\hat{\matu}_k\ \hat{\matu}_{k,1}^*]}\times_{k\in[d]\setminus I}\proj_{\hat{\matu}_k}\|_\frob=0$, it holds that $\nabla f(\tensX_{\underline{\vecr}})\times_{k\in I}\proj_{[\hat{\matu}_k\ \hat{\matu}_{k,1}^*]}\times_{k\in[d]\setminus I}\hat{\matu}_k^\top=0$. Therefore, we obtain that
        \begin{align*}
            \|\tensV_{\tensX_{\underline{\vecr}}}\|_\frob^2&=\sum_{k=1}^{d}\|(\nabla f(\tensX_{\underline{\vecr}}))_{(k)}\hat{\matv}_k^{}\hat{\matG}_{(k)}^\dagger\hat{\matG}_{(k)}^{}\hat{\matv}_k^{\top}\|_\frob^2>0.
        \end{align*}
    Since $\hat{\matv}_k^{}\hat{\matG}_{(k)}^\dagger\hat{\matG}_{(k)}^{}\hat{\matv}_k^{\top}=\proj_{\Span(\hat{\matv}_k^{}\hat{\matG}_{(k)}^\top)}=\proj_{\Span(\hat{\matx}_{(k)}^\top)}$ does not depend on a specific Tucker decomposition, there exists $\varepsilon_3(\underline{\tensX})>0$, such that 
    \[\frac{1}{2}\sum_{k=1}^{d}\|(\nabla f(\underline{\tensX}))_{(k)}\proj_{\Span(\underline{\matx}_{(k)}^\top)}\|_\frob^2\leq\|\tensV_{\tensX_{\underline{\vecr}}}\|_\frob^2\leq\frac{3}{2}\sum_{k=1}^{d}\|(\nabla f(\underline{\tensX}))_{(k)}\proj_{\Span(\underline{\matx}_{(k)}^\top)}\|_\frob^2\]
    for all $\tensX_{\underline{\vecr}}\in B[\underline{\tensX},\varepsilon_3(\underline{\tensX})]\cap\tensM_{\underline{\vecr}}$. Subsequently, $\|\tensV_{\tensX_{\underline{\vecr}}}\|_\frob$ can be bounded below from $\bar{\delta}(\underline{\tensX})$, i.e.,
        \begin{align*}
            \|\tensV_{\tensX_{\underline{\vecr}}}\|_\frob^2&\geq\frac12\min\{c_I\|\nabla f(\underline{\tensX})\times_{k\in[d]\setminus I}\proj_{\underline{\matu}_k}\|_\frob^2,\sum_{k=1}^{d}\|(\nabla f(\underline{\tensX}))_{(k)}\proj_{\Span(\underline{\matx}_{(k)}^\top)}\|_\frob^2\}.
        \end{align*}

    Consequently, let $\tensY_{\underline{\vecr}}$ be generated by GRAP for one step with initial guess $\tensX_{\underline{\vecr}}$. It holds that 
        \begin{align*}
            f(\tensY)&\leq f(\tensY_{\underline{\vecr}})\leq f(\tensX_{\underline{\vecr}})-a\tilde{\omega}^2\|\tensV_{\tensX_{\underline{\vecr}}}\|_\frob^2\min\{\frac{\rho(1-a)}{\tilde{a}(\tensX_{\underline{\vecr}},B[\underline{\tensX},\bar{\varepsilon}(\underline{\tensX})])},\rho,\frac{2\rho}{3\|\nabla f(\underline{\tensX})\|_\frob}\}\\
            &\leq f(\tensX_{\underline{\vecr}})-a\tilde{\omega}^2\bar{\delta}(\underline{\tensX})\min\{\frac{\rho(1-a)}{\bar{a}(\underline{\tensX})},\rho,\frac{2\rho}{3\|\nabla f(\underline{\tensX})\|_\frob}\}\\
            &\leq f(\tensX) + 2\delta(\underline{\tensX}) - 3\delta(\underline{\tensX})\\
            &\leq f(\tensX) - \delta(\underline{\tensX}).
        \end{align*}
\end{proof}

It is worth noting that if $\ranktc(\tensX)<\vecr$, it follows from~\cref{lem: inequality,prop: angle condition GRAP} that $\|\nabla f(\tensX)\|_\frob$ can be upper-bounded by the norm of the approximate projection $\|\tilde{\tensV}_\tensX\|_\frob$, rendering a different analysis in the proof (Cases~2 and~3). The proofs of Cases~1 and~2 are inspired from~\cite{olikier2026low}. However, tensors in $\tensM_{\leq\vecr}\setminus(\tensM_{\vecr}\cup\tensM_{<\vecr})$ lead to a different proof, where the sufficient decrease condition is built via the analysis of summands of the approximate projection $\tilde{\proj}$ and continuity of subspaces $\Span(\matx_{(k)})$ and $\Span(\matx_{(k)}^\top)$ with respect to $\tensX$. In other words, tensors where some but not all modes are of full rank indeed render essential difficulties in developing provable first-order methods. By using~\cref{prop: GRAP-R decrease}, we can prove that GRAP-R method is apocalypse-free.
\begin{theorem}\label{thm: GRAP-R}
    Given $f:\tensM_{\leq\vecr}\to\mathbb{R}$ bounded below from $f^*$ with locally Lipschitz continuous gradient and compact sublevel set. Let $\{\tensX^{(t)}\}$ be a sequence generated by~\cref{alg: GRAP-R}. If $\{\tensX^{(t)}\}$ is finite, the~\cref{alg: GRAP-R} terminates at a stationary point. Otherwise, $\{\tensX^{(t)}\}$ has an accumulation point and any accumulation point of $\{\tensX^{(t)}\}$ is stationary.
\end{theorem}
\begin{proof}
    We prove the result by contradiction. Suppose that there is a subsequence $\{\tensX^{(t_j)}\}_{j=0}^{\infty}$ converging to a non-stationary point $\underline{\tensX}$. There exists $T>0$, such that $\|\tensX^{(t_j)}-\underline{\tensX}\|_\frob\leq\varepsilon(\underline{\tensX})$ for $j\geq T$. Therefore, it follows from~\cref{prop: GRAP-R decrease} that
        \begin{align*}
            f^*-f(\tensX^{(t_T)})&\leq f(\underline{\tensX})-f(\tensX^{(t_T)})=\sum_{j=T}^{\infty} (f(\tensX^{(t_{j+1})}) - f(\tensX^{(t_{j})}))\\
            &\leq\sum_{j=T}^{\infty} (f(\tensX^{(t_{j}+1)}) - f(\tensX^{(t_{j})}))\leq\sum_{t=T+1}^{\infty}-\delta(\underline{\tensX})=-\infty,
        \end{align*}
    which contradicts with $f^*>-\infty$.
\end{proof}

\section{Retraction-free GRAP-R method}
\label{sec: rfGRAP-R}
Based on the retraction-free partial projection~\cref{eq: retraction-free projection}, we develop the retraction-free gradient-related approximate projection method with rank decrease (rfGRAP-R). Since the partial projection~\cref{eq: retraction-free projection} is retraction-free, we propose a different rank-decreasing procedure that only monitors the last singular value of unfolding matrices. We prove that the rfGRAP-R method converges to stationary points in a similar fashion as the GRAP-R method.

\subsection{Algorithm}
The rfGRAP-R method is illustrated in~\cref{alg: rfGRAP-R}. Specifically, given $\tensX^{(t)}\in\tensM_{\leq\vecr}$, we only detect the smallest non-zero singular value of unfolding matrices $\matx_{(1)}^{(t)},\matx_{(2)}^{(t)},\dots,\matx_{(d)}^{(t)}$, and construct the index sets 
\begin{equation}
    \label{eq: rfGRAP index}
    I_k=\left\{\begin{array}{ll}
            \{\rank(\matx_{(k)}^{(t)})-1,\rank(\matx_{(k)}^{(t)})\}& \text{if}\ \sigma_{\min}(\matx_{(k)}^{(t)})\leq\Delta,\\
            \{\rank(\matx_{(k)}^{(t)})\} & \text{otherwise.}
        \end{array}\right.
\end{equation}
Then, we project $\tensX^{(t)}$ to $\tensM_{\breve{\vecr}}$ for all $\breve{\vecr}\in I_1\times I_2\times\cdots\times I_d$ and yield lower-rank candidates $\tensX_{\breve{\vecr}}^{(t)}=\proj_{\breve{\vecr}}^{\mathrm{HO}}(\tensX^{(t)})$. 

Subsequently, we perform line searches to each lower-rank candidate. Given $\tensX_{\breve{\vecr}}^{(t)}$, the line search along the partial projection $\hat{\proj}_{\tangent_{\tensX_{\breve{\vecr}}^{(t)}}\!\tensM_{\leq\vecr}}(-\nabla f(\tensX_{\breve{\vecr}}^{(t)}))$ in~\cref{eq: an approximate projection} is given by
\begin{equation}
    \label{eq: rfGRAP mapping}
    \tensY_{\breve{\vecr}}^{(t)}=\tensX_{\breve{\vecr}}^{(t)}+s_{\breve{\vecr}}^{(t)}\hat{\proj}_{\tangent_{\tensX_{\breve{\vecr}}^{(t)}}\!\tensM_{\leq\vecr}}(-\nabla f(\tensX_{\breve{\vecr}}^{(t)}))
\end{equation}
For the selection of stepsize, we consider the Armijo backtracking line search in~\cref{eq: Armijo stepsize}. Finally, the $\tensX^{(t+1)}$ is set to be the point in $\{\tensY^{(t)}_{\breve{\vecr}}\}_{\breve{\vecr}}$ that has the lowest function value. The rfGRAP-R method terminates if the approximate projection satisfies $\|\hat{\proj}_{\tangent_{\tensX^{(t)}}\!\tensM_{\leq\vecr}}(-\nabla f(\tensX^{(t)}))\|_\frob=0$, i.e., $\tensX^{(t)}$ is a stationary point of~\cref{eq: problem (P)} due to the angle condition~\cref{eq: angle condition rfGRAP}.

\begin{algorithm}[htbp]
    \caption{Retraction-free gradient-related approximate projection method with rank decrease (rfGRAP-R)}
    \label{alg: rfGRAP-R}
    \begin{algorithmic}[1]
        \REQUIRE Initial guess $\tensX^{(0)}\in\tensM_{\leq\vecr}$, backtracking parameters $\rho, a\in(0,1)$, $t=0$.        
        \WHILE{The stopping criteria are not satisfied}
            \STATE Compute the index sets 
                $I_k$ for $k\in[d]$ via~\cref{eq: rfGRAP index}.
            \FOR{$\breve{\vecr}\in I_1\times I_2\times\cdots\times I_d$}
                \STATE Compute lower-rank candidate $\proj_{\breve{\vecr}}^{\mathrm{HO}}(\tensX^{(t)})$ by~\cref{eq: HOSVD}.
                \STATE Compute $\tensY_{\breve{\vecr}}^{(t)}$ by~\cref{eq: rfGRAP mapping} with Armijo backtracking line search~\cref{eq: Armijo stepsize}.
            \ENDFOR
            \STATE Update $\tensX^{(t+1)}=\argmin_{\breve{\vecr}\in I_1\times I_2\times\cdots\times I_d}f(\tensY_{\breve{\vecr}}^{(t)})$ and $t=t+1$.\label{line: rfGRAP-R}
        \ENDWHILE
        \ENSURE $\tensX^{(t)}$.
    \end{algorithmic}
\end{algorithm}

\begin{remark}
    Both~\cref{alg: GRAP-R,alg: rfGRAP-R} consist of a rank-decreasing mechanism and line search; as in~\cref{fig: general illustration}. However, in contrast with the GRAP-R method (\cref{alg: GRAP-R}) where $\Delta$-ranks are monitored and $(\prod_{k=1}^d r_k^{(t)}+1)$ candidates are generated in the worst case, the proposed {rfGRAP-R} method in~\cref{alg: rfGRAP-R} only monitors one singular value $\sigma_{\min}(\matx_{(k)}^{(t)})$ of unfolding matrices, resulting in a much smaller index set $I_1\times I_2\times\cdots\times I_d$ with $2^d$ elements in the worst case, i.e., a smaller number of rank-decreasing attempts. The rationale behind this is that the partial projection $\hat{\proj}$ is retraction-free, rendering a different analysis in the stepsize of backtracking line search; see~\cref{prop: backtracking rfGRAP,prop: rfGRAP}. In practice, if the rank parameter is small, the GRAP-R method performs better, since it benefits from taking full advantage of the tangent cone. If the rank is over-estimated or the rank parameter becomes large, the rfGRAP-R method performs better by avoiding exhaustive exploration of multiple lower-rank candidates; see~\cref{fig: biased}. Additionally, one may consider different $\Delta_k>0$ for each mode to construct the index sets $I_k$ in~\cref{eq: GRAP-R index,eq: rfGRAP index} in practice. We set $\Delta=\Delta_1=\Delta_2=\cdots=\Delta_d$ for the sake of brevity.
\end{remark}

\begin{remark}
    The proposed methods are closely-related to the rank reduction methods for bounded-rank matrices in~\cite{olikier2023apocalypse,olikier2026low}. However, there are two essential differences. First, as mentioned above, developing provable methods for low-rank tensors presents intrinsic difficulties in projections and convergence analysis. Second, there is a fundamental difference in the rank-reduction mechanism of the retraction-free methods between the rfGRAP-R and the ``RFDR" method in~\cite{olikier2023apocalypse}. Specifically, rfGRAP-R monitors the smallest nonzero singular value of an iterate, whereas RFDR tracks the $r$-th singular value along each mode, restricting the search of points in~$\mathbb{R}^{m\times n}_{r-1} \cup \mathbb{R}^{m\times n}_{r}$.
\end{remark}

\subsection{Backtracking line search}
We develop a lower bound of the stepsize in~\cref{eq: Armijo stepsize} for partial projection~\cref{eq: retraction-free projection} by using the Lipschitz constant $L_{B}$ in~\cref{eq: Lipschitz}. For the sake of brevity, we denote $\tensV_\tensX=\proj_{\tangent_{\tensX}\!\tensM_{\leq\vecr}}(-\nabla f(\tensX))$ and $\hat{\tensV}_\tensX=\hat{\proj}_{\tangent_{\tensX}\!\tensM_{\leq\vecr}}(-\nabla f(\tensX))$.
\begin{proposition}\label{prop: backtracking rfGRAP}
    Given $\tensX\in\tensM_{\leq\vecr}$ satisfying $\|\tensV_\tensX\|_\frob\neq 0$ and $\bar{s}>0$. Consider the closed ball $B_\tensX=B[\tensX,\bar{s}\|\tensV_\tensX\|_\frob]$. The Armijo condition
    \begin{equation}
        \label{eq: Armijo rfGRAP}
        f(\tensX)-f(\tensX+s\hat{\tensV}_\tensX)\geq s a \hat{\omega}(\tensX)^2\|\tensV_\tensX\|_\frob^2
    \end{equation}
    holds for some $s\in[\underline{s}(B_\tensX),\bar{s}]$ with $\underline{s}(B_\tensX)=\min\{\rho(1-a)/\hat{a}(B_\tensX),\rho\bar{s}\}$, where $\hat{a}(B_\tensX)=L_{B_\tensX}/2$. 
\end{proposition}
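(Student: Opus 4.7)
The plan is to mimic the proof of Proposition~\ref{prop: backtracking GRAP} (the GRAP-R backtracking lemma), but in a substantially simpler form because the partial projection $\hat{\proj}$ is retraction-free: the trial point $\tensX+s\hat{\tensV}_\tensX$ already lies in $\tensM_{\leq\vecr}$ for every $s\geq 0$, so no HOSVD step appears in the update and hence no curvature term~\cref{eq: curvature} is needed. This is exactly why $\hat{a}(B_\tensX)$ collapses to just $L_{B_\tensX}/2$ and no extra upper bound on $\bar{s}$ is required.

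First I would check two containment facts needed to apply the descent lemma. For feasibility, I would note from the structural formulas recalled just after~\cref{eq: retraction-free projection} that each of $\tilde{\proj}_0,\tilde{\proj}_1,\dots,\tilde{\proj}_d$ preserves feasibility along arbitrary rays, so $\tensX+s\hat{\tensV}_\tensX\in\tensM_{\leq\vecr}$ for all $s\geq 0$. For the ball, I would prove the auxiliary inequality $\|\hat{\tensV}_\tensX\|_\frob\leq\|\tensV_\tensX\|_\frob$ by writing
\[
\|\tensA-\proj(\tensA)\|_\frob^2\leq\|\tensA-\hat{\proj}(\tensA)\|_\frob^2,
\]
expanding and using both $\langle\tensA,\proj(\tensA)\rangle=\|\proj(\tensA)\|_\frob^2$ (the tangent-cone projection lands on a linear stratum at its image) and $\langle\tensA,\hat{\proj}(\tensA)\rangle=\|\hat{\proj}(\tensA)\|_\frob^2$ (from Proposition~\ref{prop: approximate rfGRAP}), which reduces the inequality to $\|\hat{\proj}(\tensA)\|_\frob\leq\|\proj(\tensA)\|_\frob$. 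Setting $\tensA=-\nabla f(\tensX)$ and combining with $s\leq\bar{s}$ yields $\|\tensX+s\hat{\tensV}_\tensX-\tensX\|_\frob\leq\bar{s}\|\tensV_\tensX\|_\frob$, so the trial point stays in $B_\tensX$.

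Next I would invoke the Lipschitz descent inequality~\cref{eq: Lipschitz} on $B_\tensX$ and simplify using the identity $\langle\nabla f(\tensX),\hat{\tensV}_\tensX\rangle=-\|\hat{\tensV}_\tensX\|_\frob^2$ from Proposition~\ref{prop: approximate rfGRAP} to obtain
\[
f(\tensX+s\hat{\tensV}_\tensX)-f(\tensX)\leq s\|\hat{\tensV}_\tensX\|_\frob^2\bigl(-1+s\hat{a}(B_\tensX)\bigr).
\]
The right-hand side is bounded above by $-sa\|\hat{\tensV}_\tensX\|_\frob^2$ exactly when $s\leq(1-a)/\hat{a}(B_\tensX)$, and the angle condition~\cref{eq: angle condition rfGRAP} in Proposition~\ref{prop: approximate rfGRAP} (which is equivalent to $\|\hat{\tensV}_\tensX\|_\frob\geq\hat{\omega}(\tensX)\|\tensV_\tensX\|_\frob$ since $\hat{\proj}(\tensA)$ is an orthogonal projection) then upgrades this to the desired Armijo bound~\cref{eq: Armijo rfGRAP}.

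Finally, I would examine the backtracking schedule $s=\rho^l\bar{s}$. If $\bar{s}\leq(1-a)/\hat{a}(B_\tensX)$, then $l=0$ works and $s=\bar{s}\geq\rho\bar{s}$. Otherwise, the first $l$ for which $\rho^l\bar{s}\leq(1-a)/\hat{a}(B_\tensX)$ satisfies $\rho^l\bar{s}\geq\rho(1-a)/\hat{a}(B_\tensX)$. In either case the accepted stepsize belongs to $[\underline{s}(B_\tensX),\bar{s}]$ with $\underline{s}(B_\tensX)=\min\{\rho(1-a)/\hat{a}(B_\tensX),\rho\bar{s}\}$. I do not anticipate a genuine obstacle here; the only delicate point is the norm comparison $\|\hat{\tensV}_\tensX\|_\frob\leq\|\tensV_\tensX\|_\frob$, needed to absorb $s\|\hat{\tensV}_\tensX\|_\frob$ into $\bar{s}\|\tensV_\tensX\|_\frob$ and keep the trial point inside $B_\tensX$.
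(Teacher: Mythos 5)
Your proposal is correct and follows essentially the same route as the paper: the Lipschitz descent inequality~\cref{eq: Lipschitz} combined with $\langle\nabla f(\tensX),\hat{\tensV}_\tensX\rangle=-\|\hat{\tensV}_\tensX\|_\frob^2$ and the angle condition from \cref{prop: approximate rfGRAP}, followed by the standard backtracking-interval argument giving $s\in[\underline{s}(B_\tensX),\bar{s}]$. Your explicit verification that $\|\hat{\tensV}_\tensX\|_\frob\leq\|\tensV_\tensX\|_\frob$ (so the trial point stays in $B_\tensX$ and $L_{B_\tensX}$ applies) makes rigorous a containment the paper leaves implicit, but it does not change the substance of the argument.
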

\begin{proof}
    Denote $\tensY=\tensX+s\hat{\tensV}_\tensX$. It follows from~\cref{eq: Lipschitz} and~\cref{prop: approximate rfGRAP} that
        \begin{align*}
            f(\tensY)-f(\tensX)&\leq\langle\nabla f(\tensX),\tensY-\tensX\rangle+\frac{L_{B_\tensX}}{2}\|\tensY-\tensX\|_\frob^2=s\|\hat{\tensV}_\tensX\|_\frob^2(-1+s\frac{L_{B_\tensX}}{2}).\\
            &\leq s\hat{\omega}(\tensX)^2\|\tensV_\tensX\|_\frob^2(-1+s\hat{a}(\tensX,B_\tensX))
        \end{align*}
    for all $s\in[0,\bar{s}]$.

    Moreover, we observe that $s\|\tensV_\tensX\|_\frob^2(-1+s\hat{a}(B_\tensX))\leq -s a \|\tensV_\tensX\|_\frob^2$ if and only if $s\leq(1-a)/\hat{a}(B_\tensX)$. Therefore, \cref{eq: Armijo stepsize} can be satisfied for all $s\in[\rho(1-a)/\hat{a}(B_\tensX),(1-a)/\hat{a}(B_\tensX)]$. If $\bar{s}\leq(1-a)/\hat{a}(B_\tensX)$, the Armijo condition~\cref{eq: Armijo stepsize} holds for some $s\in[\rho\bar{s},\bar{s}]$.
\end{proof}

It is worth noting that in contrast with $\tilde{a}(B[\tensX,\sqrt{d}\bar{s}\|\tensV_\tensX\|_\frob])$ in~\cref{prop: backtracking GRAP}, the constant $\hat{a}(B[\tensX,\bar{s}\|\tensV_\tensX\|_\frob])$ in~\cref{prop: backtracking rfGRAP} does not depend on the singular values of $\tensX$ since the rfGRAP-R method searches along a straight line without retraction in~\cref{eq: Armijo rfGRAP}. On the contrary, the backtracking linesearch~\cref{eq: Armijo GRAP} in the GRAP-R method searches along a ``curve''.

\subsection{Convergence to stationary points}
Similar to the GRAP-R method, we prove that the rfGRAP-R method converges to stationary points via the sufficient decrease condition in~\cref{subsec: GRAP-R convergence}.

\begin{proposition}\label{prop: rfGRAP}
    For all $\underline{\tensX}\in\tensM_{\vecr}$ and $\|\tensV_{\underline{\tensX}}\|_\frob\neq 0$, there exists $\varepsilon(\underline{\tensX})>0$ and $\delta(\underline{\tensX})>0$, such that $f(\tensY)-f(\tensX)<\delta(\underline{\tensX})$ holds for all $\tensX\in B[\underline{\tensX},\varepsilon(\underline{\tensX})]\cap\tensM_{\leq\vecr}$ and~$\tensY$ generated by the rfGRAP-R method (\cref{alg: rfGRAP-R}) for one step with initial guess $\tensX$.
\end{proposition}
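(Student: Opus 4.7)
The plan is to mirror the three-case dissection used in the proof of~\cref{prop: GRAP-R decrease}, adapted to the retraction-free update~\cref{eq: rfGRAP mapping} and the coarser index set~\cref{eq: rfGRAP index}. The target chain of inequalities is, for an appropriate $\breve{\vecr}\in I_1\times\cdots\times I_d$,
\begin{equation*}
f(\tensY)\;\leq\;f(\tensY_{\breve{\vecr}})\;\leq\;f(\tensX_{\breve{\vecr}})-3\delta(\underline{\tensX})\;\leq\;f(\tensX)-\delta(\underline{\tensX}),
\end{equation*}
where the first inequality comes from the $\argmin$ in Line~\ref{line: rfGRAP-R}, the second from the Armijo bound of~\cref{prop: backtracking rfGRAP} applied at $\tensX_{\breve{\vecr}}$, and the third from continuity of $f$ on a small enough ball around $\underline{\tensX}$.

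First, I would set up auxiliary quantities exactly as in~\cref{prop: GRAP-R decrease}: pick $\varepsilon_1(\underline{\tensX})>0$ so that $\|\nabla f(\tensX)\|_\frob\in[\tfrac12\|\nabla f(\underline{\tensX})\|_\frob,\tfrac32\|\nabla f(\underline{\tensX})\|_\frob]$ on $B[\underline{\tensX},\varepsilon_1(\underline{\tensX})]$, and enclose every line-search trajectory in a uniform ball $B[\underline{\tensX},\bar{\varepsilon}(\underline{\tensX})]$ on which $\nabla f$ has a uniform Lipschitz constant $L$. Then~\cref{prop: backtracking rfGRAP} directly yields
\begin{equation*}
f(\tensX_{\breve{\vecr}})-f(\tensY_{\breve{\vecr}})\;\geq\;a\,\hat{\omega}(\tensX_{\breve{\vecr}})^2\,\|\tensV_{\tensX_{\breve{\vecr}}}\|_\frob^2\,\min\bigl\{\rho(1-a)/(L/2),\,\rho\bar{s}\bigr\}.
\end{equation*}
A decisive simplification over the GRAP-R proof is that $\hat{a}=L/2$ does not involve $\sigma_{\min}(\tensX_{\breve{\vecr}})$, so no control on the smallest singular value of the lower-rank candidate is needed.

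Second, I would split on $\underline{\vecr}=\ranktc(\underline{\tensX})$. If $\underline{\vecr}=\vecr$, pick $\breve{\vecr}=\vecr$; on $\tensM_{\vecr}$ the mapping $\tensX\mapsto\tensV_\tensX$ is continuous, so $\|\tensV_\tensX\|_\frob\geq\tfrac12\|\tensV_{\underline{\tensX}}\|_\frob$ in a neighborhood, giving the descent. If $\underline{\vecr}<\vecr$ in some index set $I$, then for $\tensX$ sufficiently close to $\underline{\tensX}$, \cref{prop: rank delta} forces $\sigma_{\min}(\matx_{(k)})\leq\Delta$ exactly on the deficient modes $k\in I$, so~\cref{eq: rfGRAP index} produces an index set containing $\underline{\vecr}$. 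Choose $\breve{\vecr}=\underline{\vecr}$; by~\cref{lem: HOSVD inequality}, $\tensX_{\breve{\vecr}}\in B[\underline{\tensX},\bar{\varepsilon}(\underline{\tensX})]\cap\tensM_{\underline{\vecr}}$ and $f(\tensX_{\breve{\vecr}})\leq f(\tensX)+2\delta(\underline{\tensX})$. The lower bound on $\|\tensV_{\tensX_{\breve{\vecr}}}\|_\frob$ is then obtained via~\cref{eq: inequality} and the decomposition of $\proj_{\tangent_{\tensX_{\breve{\vecr}}}\!\tensM_{\leq\vecr}}(-\nabla f(\tensX_{\breve{\vecr}}))$ in~\cref{eq: exact projection}, exactly as in Case 3 of~\cref{prop: GRAP-R decrease}: one splits according to whether the ``core'' term $\nabla f(\tensX_{\breve{\vecr}})\times_{k\in I}\proj_{[\hat{\matu}_k\ \hat{\matu}_{k,1}^*]}\times_{k\notin I}\proj_{\hat{\matu}_k}$ is zero, and uses continuity of the subspace projections $\proj_{\Span(\hat{\matx}_{(k)})}$ and $\proj_{\Span(\hat{\matx}_{(k)}^\top)}$ to transfer the non-stationarity of $\underline{\tensX}$ to a uniform lower bound.

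The main obstacle will be verifying, in the mixed-rank subcase of Case 3, that the index set~\cref{eq: rfGRAP index} generated by the \emph{current} iterate $\tensX$ actually contains a tuple $\breve{\vecr}$ matching $\underline{\vecr}$ on the deficient modes while leaving the full-rank modes untouched. This is delicate because the rfGRAP-R rule only looks at the smallest non-zero singular value of each unfolding; but since $\underline{r}_k<r_k$ implies $\sigma_{r_k}(\matx_{(k)})\to 0$ as $\tensX\to\underline{\tensX}$ only on $k\in I$, while $\sigma_{r_k}(\matx_{(k)})$ stays bounded away from zero on the full-rank modes, an appropriate choice of $\varepsilon(\underline{\tensX})\leq\tfrac{\Delta}{2}$ together with a separation $\varepsilon(\underline{\tensX})<\tfrac12\sigma_{\min}(\underline{\matx}_{(k)})$ for $k\notin I$ ensures that $I_k\supseteq\{\underline{r}_k,\underline{r}_k+1\}$ for $k\in I$ and $I_k=\{r_k\}$ for $k\notin I$. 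Combining all ingredients, $\delta(\underline{\tensX})$ can be taken of the same form as in~\cref{prop: GRAP-R decrease} but with $\hat{\omega}$ replacing $\tilde{\omega}$ and the simpler constant $\bar{a}(\underline{\tensX})=L_{B[\underline{\tensX},\bar{\varepsilon}(\underline{\tensX})]}/2$ in place of~\cref{eq: bar a GRAP}.
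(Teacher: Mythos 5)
Your overall architecture (the three-inequality chain, the uniform ball $B[\underline{\tensX},\bar{\varepsilon}(\underline{\tensX})]$, the observation that $\hat{a}=L_{B}/2$ needs no control on $\sigma_{\min}$ of the candidate, Case 1 via continuity of the Riemannian gradient, and $f(\tensX_{\breve{\vecr}})\leq f(\tensX)+2\delta(\underline{\tensX})$ via \cref{lem: HOSVD inequality}) matches the paper. The gap is exactly at the point you flagged as the "main obstacle", and your resolution of it is incorrect: you choose $\breve{\vecr}=\underline{\vecr}$ and claim that \cref{eq: rfGRAP index} yields $I_k\supseteq\{\underline{r}_k,\underline{r}_k+1\}$ for $k\in I$. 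But $I_k$ is built from the \emph{current iterate's} rank, never from $\underline{r}_k$: it contains only $\rank(\matx_{(k)})$ and, when $\sigma_{\min}(\matx_{(k)})\leq\Delta$, additionally $\rank(\matx_{(k)})-1$. In the prototypical apocalypse situation the iterates approaching $\underline{\tensX}$ have full mode-$k$ rank $r_k$ while $\underline{r}_k\leq r_k-2$, so $\underline{r}_k\notin I_k$ and the candidate $\proj^{\ho}_{\underline{\vecr}}(\tensX)$ is simply never generated by \cref{alg: rfGRAP-R}; no shrinking of $\varepsilon(\underline{\tensX})$ can repair this, since it does not change $\rank(\matx_{(k)})$ of the incoming iterate. (This single-step-down behaviour is precisely the announced difference from the GRAP-R index sets \cref{eq: GRAP-R index}, which sweep all ranks from $\rank_\Delta(\matx_{(k)})$ up to $\rank(\matx_{(k)})$, so the GRAP-R proof can afford $\breve{\vecr}=\underline{\vecr}$ while the rfGRAP-R proof cannot.)

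The paper circumvents this by taking $\breve{r}_k=\min\{r_k-1,\rank(\matx_{(k)})\}$ for $k\in I$ and $\breve{r}_k=r_k$ otherwise; this tuple does lie in $I_1\times\cdots\times I_d$, because whenever $\rank(\matx_{(k)})=r_k>\underline{r}_k$ and $\varepsilon(\underline{\tensX})\leq\Delta$, \cref{prop: rank delta} gives $\sigma_{\min}(\matx_{(k)})=\sigma_{r_k}(\matx_{(k)})\leq\Delta$, so $r_k-1\in I_k$. The price is that $\tensX_{\breve{\vecr}}$ is in general \emph{not} in the fixed stratum $\tensM_{\underline{\vecr}}$, so one cannot transfer the Case-3 argument verbatim on $\tensM_{\underline{\vecr}}$ as you propose; instead, a deficiency of just one rank below $r_k$ on each mode of $I$ suffices: the lower bound on $\|\tensV_{\tensX_{\breve{\vecr}}}\|_\frob$ follows from \cref{eq: inequality} (through the factor $c_I$) and the retraction-free directions \cref{eq: retraction-free projection}, combined with continuity of the projections $\proj_{\Span(\hat{\matx}_{(k)})}$ and $\proj_{\Span(\hat{\matx}_{(k)}^\top)}$, none of which requires $\breve{\vecr}=\underline{\vecr}$. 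To complete your proof you would need to replace your choice of $\breve{\vecr}$ and redo the lower bound on $\|\tensV_{\tensX_{\breve{\vecr}}}\|_\frob$ along these lines.
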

\begin{proof}
    The proof sketch of~\cref{prop: rfGRAP} is illustrated in~\cref{fig: proof sketch rfGRAP-R}, which is similar to~\cref{prop: GRAP-R decrease}. Specifically, given $\underline{\tensX}\in\tensM_{\leq\vecr}$ with $\underline{\vecr}=\ranktc(\underline{\tensX})$. If $\underline{\vecr}=\vecr$, the result can be proved by the continuity of the projected anti-gradient. Otherwise, we project $\tensX\in\varepsilon(\underline{\tensX})$ onto $\tensM_{\breve{\vecr}}$ for some $\breve{\vecr}$, and adopt line search in~\cref{eq: rfGRAP mapping} to yield $\tensY_{\breve{\vecr}}$. Consequently, we obtain that 
    \[f(\tensY)\leq f(\tensY_{\breve{\vecr}})\leq f(\proj^{\mathrm{HO}}_{\breve{\vecr}}(\tensX))-3\delta(\underline{\tensX})\leq f(\tensX)-\delta(\underline{\tensX})\]
    by continuity of $f$. 
    \begin{figure}[htbp]
    
        \centering
        
        \begin{tikzpicture}
            \def\h{2};
            \node[draw,rounded corners] (O) at (0,0) {Goal: $f(\tensY)-f(\tensX)\leq-\delta(\underline{\tensX})<0$, for all $\|\tensX-\underline{\tensX}\|_\frob\leq\varepsilon(\underline{\tensX})$};

            \node[draw,rounded corners,inner sep=10pt] (T) at ($(O)-(0,2)$) {\begin{minipage}{10cm}
                \centering\vspace{-2mm}
                Three inequalities\vspace{5mm}\\
                $f(\tensY)\quad\leq\quad f(\tensY_{\breve{\vecr}})\quad\leq\quad f(\proj^{\mathrm{HO}}_{\breve{\vecr}}(\tensX))-3\delta(\underline{\tensX})\quad\leq\quad f(\tensX)-\delta(\underline{\tensX})$
            \end{minipage}};

            \draw[->,arrows = {-Latex[width=5pt, length=5pt]}] (T) -- (O);

            \coordinate (C) at ($(T)-(3.6,0.3)$);
            \def\x{1.45};
            \def\y{0.25};
            \draw[densely dotted] ($(C)+(\x,\y)$) -- ($(C)+(\x,-\y)$) -- ($(C)+(-\x,-\y)$) -- ($(C)+(-\x,\y)$) -- cycle;
            
            \node[draw,densely dotted] (F) at ($(T)+(-4.5,-\h)$) {Line~\ref{line: rfGRAP-R} in~\cref{alg: rfGRAP-R}};
            \draw[->,arrows = {-Latex[width=5pt, length=5pt]}] (F) -- ($(C)-(0,\y)$);

            \coordinate (C) at ($(T)-(0.5,0.3)$);
            \def\x{2.6};
            \def\y{0.5};
            \draw[dashed,rounded corners] ($(C)+(\x,\y)$) -- ($(C)+(\x,-\y)$) -- ($(C)+(-\x,-\y)$) -- ($(C)+(-\x,\y)$) -- cycle;

            \node[draw,dashed,rounded corners] (S) at ($(T)+(0,-\h)$) {Armijo condition~\cref{eq: Armijo rfGRAP}};
            \draw[->,arrows = {-Latex[width=5pt, length=5pt]}] (S) -- ($(T)-(0,\y)-(0,0.3)$);

            \coordinate (C) at ($(T)+(1.95,-0.3)$);
            \def\x{3.1};
            \def\y{0.32};
            \draw[densely dotted] ($(C)+(\x,\y)$) -- ($(C)+(\x,-\y)$) -- ($(C)+(-\x,-\y)$) -- ($(C)+(-\x,\y)$) -- cycle;

            \node[draw,densely dotted] (Third) at ($(T)+(4.5,-\h)$) {\cref{lem: HOSVD inequality}};
            \draw[->,arrows = {-Latex[width=5pt, length=5pt]}] (Third) -- ($(C)+(0.5*\x,-\y)$);
            \draw[->,arrows = {-Latex[width=5pt, length=5pt]}] (Third) -- (S);

            \node[draw,densely dotted] (Lem1) at ($(S)+(0,-0.7*\h)$) {Angle condition~\cref{eq: angle condition rfGRAP}};
            \draw[->,arrows = {-Latex[width=5pt, length=5pt]}] (Lem1) -- (S);

            \node[draw,densely dotted] (Lem3) at ($(Third)+(0,-0.7*\h)$) {Quasi optimality~\cref{eq: quasi-optimal HOSVD}};
            \draw[->,arrows = {-Latex[width=5pt, length=5pt]}] (Lem3) -- (Third);
        \end{tikzpicture}
        \caption{Proof sketch of~\cref{prop: rfGRAP}.}
        \label{fig: proof sketch rfGRAP-R}
    \end{figure}
    
    First, we recall the variables $\varepsilon_1(\underline{\tensX})$ and $\bar{\varepsilon}(\underline{\tensX})=3\sqrt{d+1}\bar{s}\|\nabla f(\underline{\tensX})\|_\frob/2+\varepsilon_1(\underline{\tensX})$ and $B[\tensX,\sqrt{d+1}\bar{s}\|\tensV_\tensX\|_\frob]\subseteq B[\underline{\tensX},\bar{\varepsilon}(\underline{\tensX})]$ in the proof of~\cref{prop: GRAP-R decrease}. Subsequently, we can adopt~\cref{eq: Armijo rfGRAP} to $\tensX\in B[\underline{\tensX},\varepsilon_1(\underline{\tensX})]\cap\tensM_{\leq\vecr}$ and closed ball $B[\tensX,\bar{s}\|\tensV_\tensX\|_\frob]$, and we obtain that
        \begin{align*}
            f(\tensX)-f(\tensX+\underline{s}(B[\tensX,\bar{s}\|\tensV_\tensX\|_\frob])\hat{\tensV}_\tensX)&\geq a \underline{s}(B[\tensX,\bar{s}\|\tensV_\tensX\|_\frob])  \hat{\omega}(\tensX)^2\|\tensV_\tensX\|_\frob^2\\
            &\geq a \underline{s}(B[\underline{\tensX},\bar{\varepsilon}(\underline{\tensX})]) \hat{\omega}(\tensX)^2\|\tensV_\tensX\|_\frob^2\\
            &\geq \frac{a \underline{s}(B[\underline{\tensX},\bar{\varepsilon}(\underline{\tensX})])}{d+1}\prod_{k=1}^{d}\frac{1}{\min\{n_k,n_{-k}\}}\|\tensV_\tensX\|_\frob^2,
        \end{align*}
    where we use the fact $\hat{\omega}(\tensX)^2\geq\frac{1}{d+1}\prod_{k=1}^{d}\frac{1}{\min\{n_k,n_{-k}\}}$. Then, it suffices to bound $\|\tensV_\tensX\|_\frob$ below from terms by $\underline{\tensX}$, which depends on $\underline{\vecr}=\ranktc(\underline{\tensX})$.

    \textbf{Case 1:}
    If $\ranktc(\underline{\tensX})=\vecr$, $\tensV_{\underline{\tensX}}$ boils down to the Riemannian gradient of $f$, which is continuous on $\tensM_\vecr$. Therefore, there exists $\varepsilon_2(\underline{\tensX})>0$, such that 
    \[\frac12\|\tensV_{\underline{\tensX}}\|_\frob\leq\|\tensV_\tensX\|_\frob\leq\frac{3}{2}\|\tensV_{\underline{\tensX}}\|_\frob.\]
    Consequently, let
    \[\varepsilon(\underline{\tensX})=\min\{\varepsilon_1(\underline{\tensX}),\varepsilon_2(\underline{\tensX})\}\quad\text{and}\quad\delta(\underline{\tensX})=\frac{a \underline{s}(B[\underline{\tensX},\bar{\varepsilon}(\underline{\tensX})])}{4(d+1)}\prod_{k=1}^{d}\frac{1}{\min\{n_k,n_{-k}\}}\|\tensV_{\underline{\tensX}}\|_\frob^2.\]
    We have 
    \[f(\tensY)\leq f(\tensY_{\vecr})\leq f(\tensX)-\delta(\underline{\tensX}).\]

    \textbf{Case 2:}
    If $\ranktc(\underline{\tensX})\neq\vecr$, let
    \[\delta(\underline{\tensX})=\frac{a \underline{s}(B[\underline{\tensX},\bar{\varepsilon}(\underline{\tensX})])}{12(d+1)}\prod_{k=1}^{d}\frac{c_I}{\min\{n_k,n_{-k}\}^2}\|\tensV_{\underline{\tensX}}\|_\frob^2,\]
    where $I=\{k\in[d]:\underline{r}_k<r_k\}$. Since $f$ is continuous at $\underline{\tensX}$, there exists $\varepsilon_0(\underline{\tensX})\in(0,\varepsilon_3(\underline{\tensX}))$, such that $f(B[\underline{\tensX},\varepsilon_0(\underline{\tensX})])\subseteq[f(\underline{\tensX})-\delta(\underline{\tensX}),f(\underline{\tensX})+\delta(\underline{\tensX})]$. We define
    \[\varepsilon(\underline{\tensX})=\min\{\Delta,\frac{1}{\sqrt{d}+1}\varepsilon_0(\underline{\tensX}),\frac{1}{\sqrt{d}+1}\varepsilon_1(\underline{\tensX})\}.\]

    For $\tensX\in B[\underline{\tensX},\varepsilon(\underline{\tensX})]\cap\tensM_{\leq\vecr}$, it follows from~\cref{prop: rank delta} that $\rank_\Delta(\matx_{(k)})\leq\underline{r}_k\leq\rank(\matx_{(k)})$ for all $k\in I$ and $\rank(\matx_{(k)})=r_k=\underline{r}_k$ for $k\in[d]\setminus I$. Consider $\breve{\vecr}=(\breve{r}_1,\breve{r}_2,\dots,\breve{r}_d)$ with $\breve{r}_k=\min\{r_k-1,\rank(\matx_{(k)})\}$ for $k\in I$ and $\breve{r}_k=r_k$ for $k\in[d]\setminus I$. Since $\breve{\vecr}\leq\ranktc(\tensX)$, $\tensX_{\breve{\vecr}}=\proj^{\ho}_{\breve{r}}(\tensX)$ exists. It follows from~\cref{lem: HOSVD inequality} that
    \[\|\tensX_{\breve{\vecr}}-\underline{\tensX}\|_\frob\leq(\sqrt{d}+1)\|\tensX-\underline{\tensX}\|_\frob\leq\min\{\varepsilon_0(\underline{\tensX}),\varepsilon_1(\underline{\tensX})\}.\]
    Therefore, $f(\tensX_{\breve{\vecr}})\leq f(\tensX) + 2\delta(\underline{\tensX})$.

    We aim to bound $\|\tensV_{\tensX_{\breve{\vecr}}}\|_\frob$ by the terms depending on $\underline{\tensX}$. We observe from~\cref{eq: inequality} and retraction-free directions~\cref{eq: retraction-free projection} that there exists $\varepsilon_3(\underline{\tensX})>0$ such that
        \begin{align*}
            \|\tensV_{\tensX_{\breve{\vecr}}}\|_\frob&\geq\sqrt{c_I}\|\nabla f(\tensX_{\breve{\vecr}})\times_{k\in[d]\setminus I}\proj_{\breve{\matu}_k}\|_\frob
            \geq\frac{\sqrt{c_I}}{2}\|\nabla f(\underline{\tensX})\times_{k\in[d]\setminus I}\proj_{\underline{\matu}_k}\|_\frob,
        \end{align*}
    and
        \begin{align*}
            \|\tensV_{\tensX_{\breve{\vecr}}}\|_\frob&\geq\|\tilde{\proj}_{k}(\nabla f(\tensX_{\breve{\vecr}}))\|_\frob=\|\breve{\tensG}\times_k(\proj_{\breve{\matu}_{k}}^\perp(\nabla f(\tensX_{\breve{\vecr}})\times_{j\neq k}\breve{\matu}_j^\T)_{(k)}\breve{\matG}_{(k)}^\dagger)\times_{j\neq k}\breve{\matu}_j\|_\frob\\
            &\geq\frac12\|\underline{\tensG}\times_k(\proj_{\underline{\matu}_{k}}^\perp(\nabla f(\underline{\tensX})\times_{j\neq k}\underline{\matu}_j^\T)_{(k)}\underline{\matG}_{(k)}^\dagger)\times_{j\neq k}\underline{\matu}_j\|_\frob
        \end{align*}
    for all $k\in[d]$ and $\tensX_{\breve{\vecr}}\in B[\underline{\tensX},\varepsilon_3(\underline{\tensX})]$. Similar to case 3 in the proof of~\cref{prop: GRAP-R decrease}, the analysis above does not depend on a specific choice of Tucker decompositions $\tensX_{\breve{\vecr}}=\breve{\tensG}\times_{k=1}^d\breve{\matu}_k$ and $\underline{\tensX}=\underline{\tensG}\times_{k=1}^d\underline{\matu}_k$. Consequently, we have
    \[\|\tensV_{\tensX_{\breve{\vecr}}}\|_\frob\geq\frac{\sqrt{c_I}}{2}\prod_{k=1}^d\sqrt{\frac{1}{\min\{n_k,n_{-k}\}}}\|\hat{\tensV}_{\underline{\tensX}}\|_\frob\geq\frac{\sqrt{c_I}}{2}\prod_{k=1}^d\frac{1}{\min\{n_k,n_{-k}\}}\|\tensV_{\underline{\tensX}}\|_\frob\]

    Finally, we bound $f(\tensY)$ via backtracking line search on $\tensX_{\breve{\vecr}}$. Since 
    \[\|\tensZ-\underline{\tensX}\|_\frob\leq\|\tensZ-\tensX_{\breve{\vecr}}\|_\frob+\|\tensX_{\breve{\vecr}}-\underline{\tensX}\|_\frob\leq \bar{s}\|\tensV_{\tensX_{\breve{\vecr}}}\|_\frob + (\sqrt{d}+1)\|\tensX-\underline{\tensX}\|_\frob\leq\bar{\varepsilon}(\underline{\tensX})\]   
    for all $\tensZ\in B[\tensX_{\breve{\vecr}},\bar{s}\|\tensV_{\tensX_{\breve{\vecr}}}\|_\frob]$, it holds that $B[\tensX_{\breve{\vecr}},\bar{s}\|\tensV_{\tensX_{\breve{\vecr}}}\|_\frob]\subseteq B[\underline{\tensX},\bar{\varepsilon}(\underline{\tensX})]$. Consequently, we obtain from~\cref{eq: Armijo rfGRAP} and $\|\tensX_{\breve{\vecr}}-\underline{\tensX}\|_\frob\leq\varepsilon_1(\underline{\tensX})$ that 
    \begin{align*}
        f(\tensY)&\leq f(\tensY_{\breve{\vecr}})\leq f(\tensX_{\breve{\vecr}}) - \frac{a \underline{s}(B[\underline{\tensX},\bar{\varepsilon}(\underline{\tensX})])}{d+1}\prod_{k=1}^{d}\frac{1}{\min\{n_k,n_{-k}\}}\|\tensV_{\tensX_{\breve{\vecr}}}\|_\frob^2\\
        &\leq f(\tensX_{\breve{\vecr}}) - \frac{a \underline{s}(B[\underline{\tensX},\bar{\varepsilon}(\underline{\tensX})])}{4(d+1)}\prod_{k=1}^{d}\frac{c_I}{\min\{n_k,n_{-k}\}^2}\|\tensV_{\underline{\tensX}}\|_\frob^2\\
        &\leq f(\tensX) + 2\delta(\underline{\tensX}) - 3\delta(\underline{\tensX})\\
        &\leq f(\tensX) - \delta(\underline{\tensX}).
    \end{align*}
\end{proof}

It is worth noting that Cases 2 and 3 in the proof of~\cref{prop: GRAP-R decrease} are treated separately, while they are merged in the proof of~\cref{prop: rfGRAP}. The rationale is whether the analysis depends on the smallest singular value $\sigma_{\min}(\tensX)$. Specifically, in the proof of~\cref{prop: GRAP-R decrease}, the inequality~\cref{eq: aim of proof GRAP-R} involves the term $1/\sigma_{\min}(\tensX)^2 + 1$. As a result, different rank-deficiency regimes lead to different properties, necessitating a case-by-case analysis. On the contrary, the analysis of the rfGRAP-R method does not depend on $\sigma_{\min}(\tensX)$, and thus, Cases 2 and 3 can be merged into a unified argument. By using~\cref{prop: rfGRAP}, we can prove that the rfGRAP-R method is apocalypse-free in a same fashion as~\cref{thm: GRAP-R}.
\begin{theorem}
    Given $f:\tensM_{\leq\vecr}\to\mathbb{R}$ bounded below from $f^*$ with locally Lipschitz continuous gradient and compact sublevel set. Let $\{\tensX^{(t)}\}$ be a sequence generated by~\cref{alg: rfGRAP-R}. If $\{\tensX^{(t)}\}$ is finite, the~\cref{alg: rfGRAP-R} terminates at a stationary point. Otherwise, $\{\tensX^{(t)}\}$ has an accumulation point and any accumulation point of $\{\tensX^{(t)}\}$ is stationary.
\end{theorem}

\section{Numerical validation}\label{sec: experiments}
We validate the effectiveness of the GRAP-R and rfGRAP-R methods on the tensor completion problem based on Tucker decomposition. Specifically, given a partially observed tensor $\tensA\in\mathbb{R}^{n_1\times n_2\cdots\times n_d}$ on an index set $\Omega\subseteq[n_1]\times [n_2]\times\cdots\times[n_d]$, Tucker tensor completion aims to recover the tensor $\tensA$ from its entries on $\Omega$ based on the low-rank Tucker decomposition. The optimization problem can be formulated on the set of bounded-rank tensors~$\tensM_{\leq\vecr}$, i.e., 
\begin{equation}\label{eq: LRTC P}
    \begin{aligned}
        \min_\tensX\ \ & \frac12\|\proj_\Omega(\tensX)-\proj_\Omega(\tensA)\|_\frob^2\\
        \subjectto\ \ & \quad \quad \tensX\in\tensM_{\leq\vecr},
    \end{aligned}
\end{equation}
where $\proj_\Omega$ is the projection operator onto $\Omega$, i.e, $\proj_\Omega(\tensX)(i_1,\dots,i_d)=\tensX(i_1,\dots,i_d)$ if~$(i_1,\dots,i_d)\in\Omega$, otherwise $\proj_\Omega(\tensX)(i_1,\dots,i_d)=0$ for $\tensX\in\mathbb{R}^{n_1\times n_2\times\cdots\times n_d}$. The \emph{sampling rate} is denoted by $p:=|\Omega|/(n_1n_2\cdots n_d)$. 

We compare the proposed methods with other methods: 1) a Riemannian conjugate gradient method\footnote{GeomCG toolbox: \url{https://www.epfl.ch/labs/anchp/index-html/software/geomcg/}.} (GeomCG)~\cite{kressner2014low}; 2) a Riemannian conjugate gradient method\footnote{Available at: \url{https://bamdevmishra.in/codes/tensorcompletion/}.} on quotient manifold under a preconditioned metric (RCG-quotient)~\cite{kasai2016low} for optimization on fixed-rank manifolds; 3) the gradient-related approximate projection method (GRAP); 4) the retraction-free gradient-related approximate projection method (rfGRAP); and 5) the Tucker rank-adaptive method\footnote{GRAP, rfGRAP, TRAM methods: \url{https://github.com/JimmyPeng1998/TRAM}.} (TRAM) for optimization on Tucker tensor varieties~\cite{gao2025low}. Note that the GRAP and rfGRAP methods correspond to line search methods on bounded-rank Tucker tensors without any rank-decreasing mechanism in the GRAP-R and rfGRAP-R methods. In contrast, the GRAP-R and rfGRAP-R methods involve rank-decreasing mechanisms, which allow the methods to search over multiple lower-rank candidates at each iteration.

The performance of all methods is evaluated by the test error $\varepsilon_{\Gamma}(\tensX):=\|\proj_\Gamma(\tensX)-\proj_\Gamma(\tensA)\|_\frob/\|\proj_\Gamma(\tensA)\|_\frob$, where $\Gamma$ is a test set different from the training set $\Omega$. To ensure a fair comparison, we adopt the default settings in~\cite[\S 7.1]{gao2025low}. We set $d=3$ and $n_1=n_2=n_3=400$. All experiments are performed on a workstation with two Intel(R) Xeon(R) Processors Gold 6330 (at 2.00GHz$\times$28, 42M Cache) and 512GB of RAM running Matlab R2019b under Ubuntu 22.04.3. The codes of the proposed methods are available at \href{https://github.com/JimmyPeng1998/TuckerApoFree}{https://github.com/JimmyPeng1998/TuckerApoFree.}

\subsection{Test with true rank}
First, we examine the performance of all methods with true rank, i.e., $\vecr=\vecr^*=(6,6,6)$. \cref{fig: unbiased}~reports the test error of all methods with sampling rate $p=0.005,0.01,0.05$. First, we observe that the GRAP-R method is comparable to other candidates. Second, the rfGRAP-R method requires more iterations than the GRAP-R method, since it only adopt partial information from the tangent cone to avoid retraction. 

\begin{figure}[htbp]
    \centering
    \includegraphics[width=\textwidth]{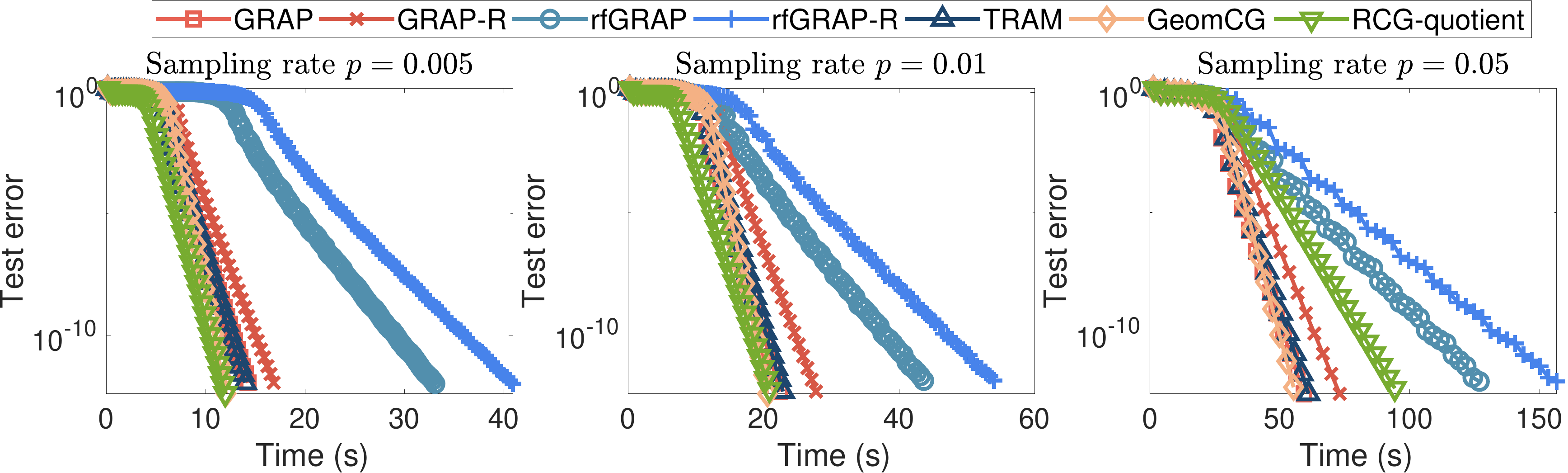}
    \caption{The recovery performance under sampling rate $p=0.005,0.01,0.05$ with unbiased rank parameter.}
    \label{fig: unbiased}
\end{figure}

\subsection{Test with over-estimated rank parameter}
We test the performance of all methods under a set of over-estimated ranks $\vecr=(r,r,r)$ with $r=3,4,5,6>r^*=2$. Note that the constructed rank-deficient stationary point $\tensA$ has the rank $\vecr^*=(r^*,r^*,r^*)=(2,2,2)$. We set the sampling rate $p=0.01$. To ensure a fair comparison to fixed-rank solvers GeomCG and RCG-quotient, the initial guess $\tensX^{(0)}$ is generated from $\tensM_{\vecr}$. The numerical results are reported in~\cref{fig: biased}. First, we observe from~\cref{fig: biased} (left) that the proposed GRAP-R, rfGRAP-R methods, and TRAM method converge while other methods fail to recover the data tensor due to the over-estimated rank parameter. Second, the rfGRAP-R method is comparable to the TRAM method and performs better than the GRAP-R method since the rfGRAP-R method requires fewer rank-decreasing attempts than the GRAP-R method. Third, the right figure in~\cref{fig: biased} reports the history of selected rank parameters in the GRAP-R, rfGRAP-R and TRAM methods. We observe that all these methods are able to recover the underlying low-rank structure of the tensor $\tensA$ even with over-estimated rank-parameter, i.e., converge to a rank-deficient stationary point.

\begin{figure}[htbp]
    \centering
    \includegraphics[width=\textwidth]{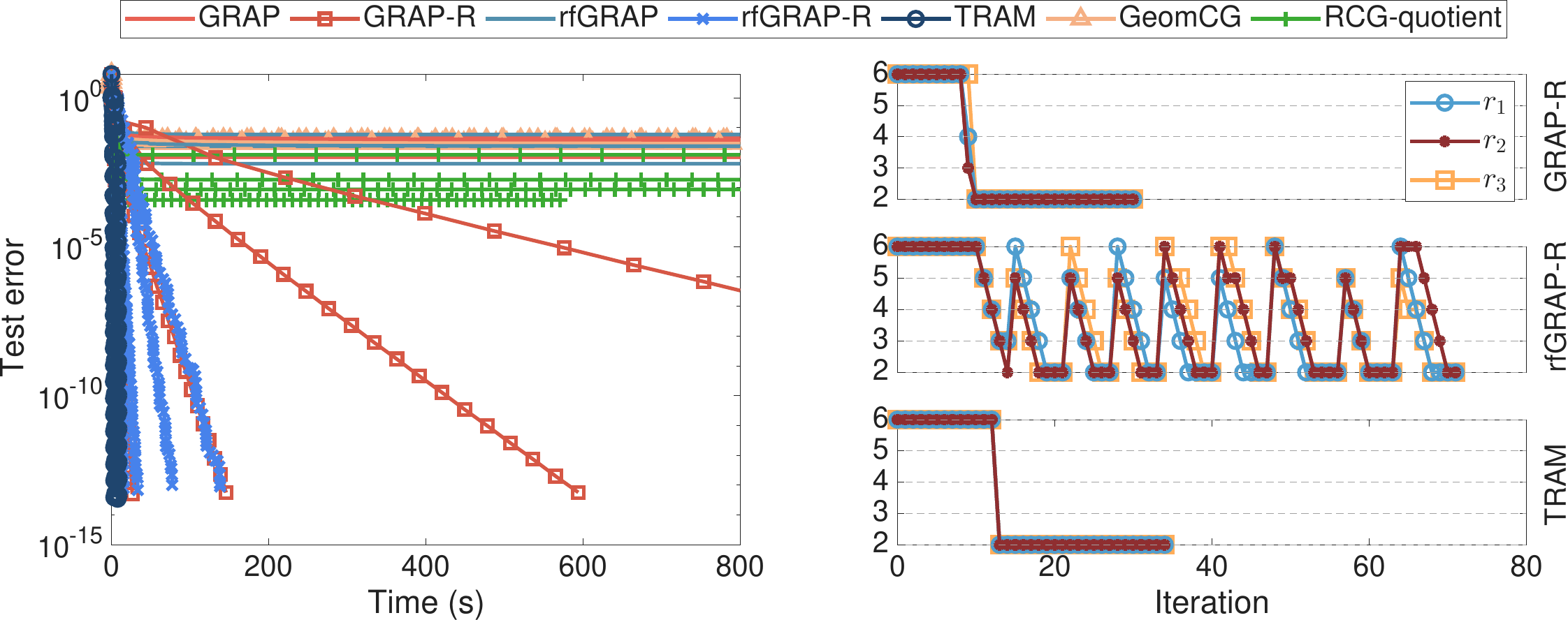}
    \caption{Numerical results with over-estimated rank parameter. Left: test errors. Right: history of the selected rank parameters from the constructed index sets in~\cref{alg: GRAP-R,alg: rfGRAP-R}, and TRAM method with rank parameter $\vecr=(6,6,6)$.}
    \label{fig: biased}
\end{figure}

\subsection{Test on real dataset}
	We also test the proposed two methods (GRAP-R and rfGRAP-R) on the ``Movielens 1M'' dataset\footnote{Available at \url{https://grouplens.org/datasets/movielens/1m/}.}, where the true underlying rank is unknown. These movie ratings can be formulated as a third-order tensor $\tensA$ of size $6040\times 3952\times 150$. We randomly select $80\%$ of the known ratings as a training set $\Omega$ and the rest $20\%$ ratings are test set $\Gamma$. The rank parameter is set to be $\vecr=(r,r,r)$ with $r=1,2,\dots,11$.  We observe from~\cref{fig:ML1M} (left) that the GRAP-R method can converge faster under moderate rank parameter. \Cref{fig:ML1M}~(right) shows that if the rank parameter is relatively small, the GRAP-R performs slightly better than rfGRAP-R. As the rank increases, the performance of rfGRAP-R becomes better than the GRAP-R method. 
	
	\begin{figure}[htbp]
		\centering
		\includegraphics[width=\textwidth]{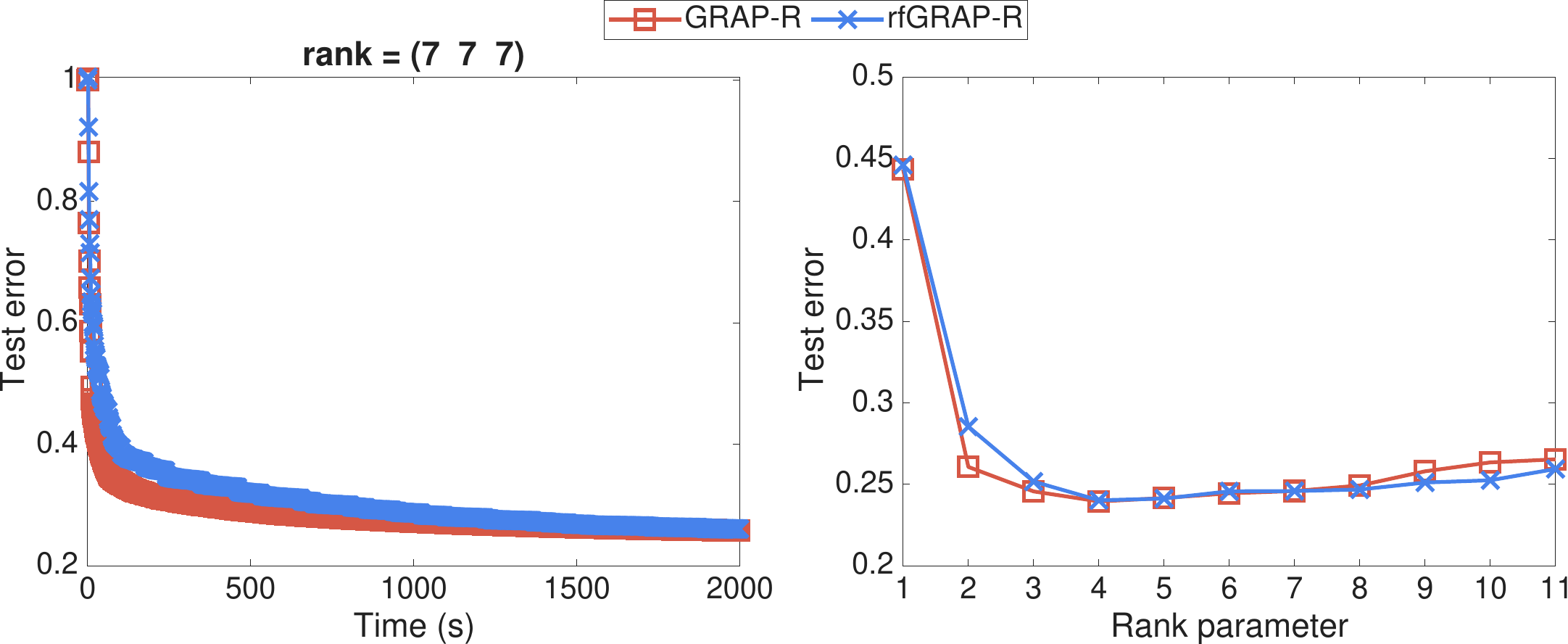}
		\caption{Test errors of the GRAP-R and rfGRAP-R methods on the movielens 1M dataset.}
		\label{fig:ML1M}
	\end{figure}
	
	In summary, GRAP-R is more effective when the prescribed rank is close to the intrinsic rank of the data, while rfGRAP-R is more robust to over-estimated rank parameters.


\section{Conclusion and perspectives}\label{sec: conclusion}
We have proposed two first-order methods, GRAP-R and rfGRAP-R, for low-rank tensor optimization in Tucker format that converge to stationary points, thereby resolving the open problem posed in~\cite[Example 3.14]{levin2023finding}. While developing such methods in matrix case appears to be prosperous recently, provably finding stationary points in the set of bounded-rank tensors turns out to be challenging due to the intricacy of tensors. The first obstacle is a practical optimality condition for low-rank tensor optimization. We have provided an explicit characterization of normal cones, which facilitates a practical optimality condition, and extends the scope of geometry of Tucker tensor varieties in~\cite[\S 3]{gao2025low}. The second obstacle is constructing an ``appropriate'' projection since the metric projection onto the tangent cone does not enjoy a closed form. We have proposed an approximate projection and a retraction-free partial projection via singular value decomposition. The third challenge is that the projection of the anti-gradient onto the tangent cone is not continuous in the set of bounded-rank tensors. As a remedy, we have proposed two gradient-related approximate projection methods by combining rank-decreasing mechanisms and line searches. Numerical results in tensor completion suggest that the GRAP-R and rfGRAP-R methods indeed converge to stationary points under different selections of rank parameters.

The proposed methods borrow the idea of rank reduction on bounded-rank matrices in recent work~\cite{olikier2023apocalypse,olikier2023first,olikier2026low}, but the generalization to tensors has intrinsic difficulties in projections and convergence analysis. It is worth noting that the rfGRAP-R method can be deemed as a new ``rank-adaptive'' method both on bounded-rank matrices ($d=2$) and tensors. On the one hand, the proposed rank-decreasing mechanism enables the precise determination of an appropriate rank parameter. On the other hand, the partial projection~\cref{eq: retraction-free projection} supports ``rank-adaptive'' searching. Specifically, we observe that searching along $\hat{\proj}_0$ can increase the rank of an iterate, and searching along $\proj_k$ preserves the current rank. In summary, both the proposed rank-decreasing mechanism and the partial projection~\cref{eq: retraction-free projection} contribute to identifying the appropriate rank parameter, which has been reflected in numerical experiments.

The proposed methods are guaranteed to converge to stationary points. However, due to the additional computational cost from rank-decreasing mechanisms, the proposed rfGRAP-R method is comparable to the heuristic rank-adaptive method, TRAM~\cite{gao2025low}. Rather than aiming to outperform TRAM in practice, we aim to complement such effective heuristic methods by providing a framework with rigorous theoretical guarantees. In practice, we recommend the rank-adaptive method to avoid line searches at every low-rank candidate. In the light of the observations, a promising research direction is to develop provable methods that avoid (or alleviate) lengthy rank explorations.


\section*{Acknowledgments}
We would like to thank the editor and the anonymous reviewer for insightful comments, P.-A. Absil for comments on the methods and proofs, Nicolas Boumal and Christopher Criscitiello for discussions on normal cones of bounded-rank tensors, and Guillaume Olikier for discussions on stepsizes in line search methods.

\bibliographystyle{siamplain}
\bibliography{references}
\end{document}